 \let\mathscr\relax
\newtheorem{theorem}{Theorem}
\newtheorem{corollary}{Corollary}
\newtheorem{proposition}{Proposition}
\newtheorem{lemma}{Lemma}
\newtheorem{assumption}{Assumption}
\newtheorem{example}{Example}
\newtheorem{remark}{Remark}
\newcommand{\E}{\mathbb{E}}
\newcommand{\mc}{\mathcal}
\renewcommand{\subsectionmark}[1]{}
\begin{document}

\title{Source-Condition Analysis of Kernel Adversarial Estimators}

\author{
Antonio Olivas-Martinez\textsuperscript{1} and Andrea Rotnitzky\textsuperscript{1} \\
\textsuperscript{1}Department of Biostatistics, University of Washington
}

\maketitle

\begin{abstract}
In many applications, the target parameter depends on a nuisance function defined by a conditional moment restriction, whose estimation often leads to an ill-posed inverse problem. Classical approaches, such as sieve-based GMM, approximate the restriction using a fixed set of test functions and may fail to capture important aspects of the solution. Adversarial estimators address this limitation by framing estimation as a game between an estimator and an adaptive critic. We study the class of Regularized Adversarial Stabilized (RAS) estimators that employ reproducing kernel Hilbert spaces (RKHSs) for both estimation and testing, with regularization via the RKHS norm. Our first contribution is a novel analysis that establishes finite-sample bounds for both the weak error and the root mean squared error (RMSE) of these estimators under interpretable source conditions, in contrast to existing results. Our second contribution is a detailed comparison of the assumptions underlying this RKHS-norm-regularized approach with those required for (i) RAS estimators using $\mathcal{L}^2$ penalties, and (ii) recently proposed, computationally stable Kernel Maximal Moment estimators.

\end{abstract}

\section{Introduction}\label{sec:intro}

In many scientific and policy-relevant applications, target parameters depend on nuisance functions identified through conditional moment restrictions. Estimating such functions often leads to ill-posed inverse problems, where small perturbations in intermediate quantities can cause large estimation errors. These settings arise in causal inference with unmeasured confounding \citep{santos2011instrumental, severini2012efficiency, escanciano2013identification, freyberger2015identification, miao2018identifying, tchetgen2020introduction, kallus2021causal, ghassami2022minimax, bennett2022inference, bennett2023source, cui2024semiparametric} and in analyses of missing-not-at-random data \citep{miao2016varieties, li2023non, miao2024identification}.

Classical approaches---such as series or sieve-based Generalized Method of Moments (GMM)---replace the infinite set of implied moment equations with a finite system, obtained by testing against a pre-chosen set of functions. While computationally straightforward, these methods can miss key structural components of the solution when they lie in directions poorly represented by the chosen test space.

Adversarial estimators \citep{bennett2019deep, muandet2020dual, muandet2020kernel, dikkala2020minimax, liao2020provably, kallus2021causal, ghassami2022minimax, mastouri2021proximal, bennett2022inference, zhang2023instrumental, bennett2023minimax, bennett2023source, park2024proximal} address this limitation by framing estimation as a two-player zero-sum game: a candidate solution is pitted against an adaptive critic that searches for the most challenging directions of violation. Within this framework, Regularized Adversarial Stabilized (RAS) estimators apply Tikhonov regularization to stabilize estimation. Of particular interest are variants that use reproducing kernel Hilbert spaces (RKHSs) for both estimation and testing. When regularized via the RKHS norm---hereafter referred to as KRAS estimators---they admit closed-form solutions without the numerical instability suffered by their $\mathcal{L}^2(\mathbb{P}_n)$--penalized RAS counterparts. The latter, while also closed-form, require solving nearly ill-conditioned systems and can be computationally fragile.

Theoretical analysis of $\mathcal{L}^2(\mathbb{P}_n)$--penalized RAS estimators, hereafter referred to as LRAS estimators, has yielded sharp finite-sample bounds under interpretable source conditions that link solution smoothness to operator smoothing properties. In contrast, existing theory for KRAS estimators is limited: available bounds rely on abstract ill-posedness measures that are hard to interpret and impose uniqueness of the solution.

This paper has two goals of equal importance. First, we develop a new theoretical framework for KRAS estimators that yields finite-sample bounds for both weak error and RMSE under transparent source conditions, without requiring uniqueness of the inverse problem solution. These conditions describe the smoothness of the minimal-norm solution relative to the smoothing effect of the composite operator formed by the conditional expectation and the RKHS kernel integral operator. Second, we place KRAS estimators in the broader landscape by systematically comparing their assumptions underpinning their error bounds, and the convergence rates that each deliver, with those of LRAS estimators and with those of recently proposed, computationally stable Kernel Maximal Moment (KMMR) estimators. This comparative perspective situates our results within the spectrum of available methods, clarifying the trade-offs each entails and the contexts in which their respective assumptions are most plausible. 

The rest of the paper is organized as follows. Section~\ref{sec:targer_param} introduces the general class of target parameters and formulates their identification through conditional moment equations, which often give rise to ill-posed inverse problems. Section~\ref{sec:estimation} reviews the debiased machine learning (DML) framework for estimating the target parameter, emphasizing the importance of controlling both weak error and RMSE in the estimation of nuisance functions. Section~\ref{sec:estimation_nuisance_fns} outlines the broad class of regularized estimation strategies, with a particular focus on adversarial estimators. Section~\ref{sec:conv_analysis} presents our main theoretical results: finite-sample bounds on weak error and RMSE of KRAS estimator under interpretable source conditions, without requiring uniqueness of the solution to the inverse problem. Section~\ref{sec:rel_work} provides comparative analysis of KRAS, LRAS and KMMR estimators. We conclude in Section~\ref{sec:conclusions} with a discussion of open problems.

\section{Target Parameter}\label{sec:targer_param}

Let $O_1, \dots, O_n$ be independent copies of a random vector $O$ with unknown probability distribution $P_0$  assumed to belong to a class $\mc{P}$---to be specified later---of mutually absolutely continuous laws $P$ on some sample space to be defined later. Let $W:=(W', X)$ and $Z:=(Z',X)$ be two subvectors of $O$, where $X$ contains the variables common to both $W$ and $Z$, if any, and $W'$ and $Z'$ contain the remaining, non-overlapping variables. We assume $W'$ and $Z'$ are both non-empty, so that $W$ and $Z$ are distinct but may share the components collected in $X$.

For any random vector $V$ distributed according to $P_V$, we let $\mc{L}^2(P_V)$ denote the space of real-valued measurable functions on the sample space of $V$ with finite second moment under $P_V$. We use calligraphic upper case letters to denote the, assumed common, support of random variables under any law $P$ in $\mc{P}$, i.e. if $V$ is a subvector of $O$, $\mc{V}$ denotes the common support of $V$ under any $P$ in $\mc{P}$. We denote with $\Vert\cdot\Vert_2$
and $\langle\cdot,\cdot\rangle_2$ the norm and the inner product in the $\mc{L}^2$ space with respect to the relevant probability law. For any real-valued map $f :\mc{V}\mapsto \mathbb{R}$, $\Vert f\Vert_\infty:=\sup_{v\in\mc{V}}|f(v)|$. We let $\mathbb{E}_n$ denote the empirical mean operator.

We are concerned with inference based on $O_1, \dots, O_n$ about the value taken at $P_0$ of functionals $P \mapsto \theta(P)$ that admit the following triple representation:
\begin{equation}
    \theta(P)=\Psi_P(h_{0,P})=\Phi_P(g_{0,P})=\mathbb{E}_P\left\{r(O)h_0(W)g_{0,P}(Z)\right\}\label{eq:target_param}
\end{equation}
where 
\begin{enumerate}
    \item 
    $\Psi_P(h) :=\mathbb{E}_P\left\{\widetilde{m}(O;h)\right\}, \Phi_P(g) :=\mathbb{E}_P\left\{m(O;g)\right\} $
    with $\widetilde{m}:\mc{O}\times\mc{L}^2(P_W)\rightarrow\mathbb{R}$ and  $m:\mc{O}\times\mc{L}^2(P_Z)\rightarrow\mathbb{R}$ are known mappings such that for each $o$, the maps $h \mapsto \tilde {m}(o;h)$ and $g\mapsto m(o;g)$ are linear, and the functionals $h\mapsto\Psi_P(h)$ and $g\mapsto\Phi_P(g)$
are bounded (and linear) on $\mc{L}^2(P_W)$ and $\mc{L}^2(P_Z)$ respectively, with Riesz representers denoted by $\tilde\rho_P(W)$ and $\rho_P(Z)$; and
\item $h_{0,P}$ and $g_{0,P}$ solve respectively the integral equations 
\begin{equation}
    \E_P\left\{r(O)h(W)|Z\right\}=\rho_{P} (Z) \quad a.e. (P_Z),\label{eq:int_eq_h}
\end{equation}
and 
\begin{equation}
    \E_P\left\{r(O)g(Z)|W\right\}=\tilde\rho_{P} (W) \quad a.e. (P_W)\label{eq:int_eq_g},
\end{equation} 
where $r$ is a known measurable function of $O$ satisfying $\Vert r\Vert_{\infty} < \infty $. 
\end{enumerate}

We make no additional assumptions on the data generating process other than the existence of solutions to equations~\eqref{eq:int_eq_h} and~\eqref{eq:int_eq_g} at $P=P_0$. Therefore, the aforementioned model $\mc{P}$ is the set of all laws $P$ on the sample space of $O$ such that equations~\eqref{eq:int_eq_h} and~\eqref{eq:int_eq_g} have a solution.
Importantly, we assume that solutions to these equations exist but we do not require these solutions to be unique. Nevertheless, when both equations admit solutions, the parameter $\theta(P)$ is well-defined, in the sense that its value remains the same regardless of the specific solutions $h_{0,P}$ and $g_{0,P}$ that are used in~\eqref{eq:target_param}. This well known fact (see, e.g. \citet{severini2012efficiency}) can be seen as follows. Let $\mc{T}:\mc{L}^2(P_W)\rightarrow\mc{L}^2(P_Z)$ and $\mc{T}^*:\mc{L}^2(P_Z)\rightarrow\mc{L}^2(P_W)$ denote the operators
\begin{equation*}
    \mc{T}h(Z):=\E_P\left\{r(O)h(W)|Z\right\} \quad \text{and} \quad \mc{T}^*g(W):=\E_P\left\{r(O)g(Z)|W\right\}.
\end{equation*}

The operator $\mc{T}_P^*$ is the adjoint operator of $\mc{T}_P$. 
If $h_{0,P}$ and $h^\diamond_{0,P}$ are two solutions of equation~\eqref{eq:int_eq_h}, then $h_{0,P}-h^\diamond_{0,P}$ is in the null space of $\mc{T}_P$ and consequently is orthogonal to the range of the adjoint operator $\mc{T}_P^*$. On the other hand, the existence of a solution to equation~\eqref{eq:int_eq_g} implies that $\tilde{\rho}_P(W)$ is in the range of the operator $\mc{T}_P^*$, and consequently is orthogonal to  $h_{0,P}-h^\diamond_{0,P}$. Therefore, we have 
$\E_P\{\widetilde{m}(O;h_{0,P})\}-\E_P\{\widetilde{m}(O;h^\diamond_{0,P})\}=
\E_P\{(h_{0,P}-h^\diamond_{0,P}) \tilde{\rho}_P(W)\}=0.$  
The same reasoning can be used to argue the uniqueness of $\E_P\{ m(O;g_{0,P})\}$ regardless of the solution $g_{0,P}$ of~\eqref{eq:int_eq_g}.

For some estimands $\theta(P_0)$, see for instance Example~\ref{example_mtp}
below, it just happens that it is a-priori known that for some non-empty subset $\mc{X}_0\subsetneq\mc{X}$ it holds that $\rho_P(Z)=0$ a.e. $(P_Z)$ if and only if $Z= (Z', X)$ a.e. $(P_Z)$ and $X\notin\mc{X}_0$. In such a case, to enforce this restriction we will restrict attention to solutions of equation~\eqref{eq:int_eq_h} of the form  $h_{0,P} (w) = I_0(x) h_{0,P}^\dag (w) $ where  $h_{0,P}^\dag$ is any function in $\mc{L}^2(P_W)$ that satisfies $\mc{T}_P(h)(z)=\rho_P(z)$ whenever $z=(z',x)$ and $x \in \mc{X}_0$,  where here and throughout
\begin{equation*}
    I_0(x):=1\quad\text{if }x\in\mc{X}_0\quad\text{and}\quad I_0(x) := 0\quad\text{otherwise.}
\end{equation*}
We follow the same rule for defining the solutions $g_{0,P}$ whenever the Riesz representer $\tilde{\rho}_P(W)$ has structural zeros at some subset of the support of $X$.

Estimands of the form $\theta(P_0)$ are ubiquitous in causal inference, including settings with unobserved confounding---where identification is achieved by means of instrumental variables \citep{santos2011instrumental, severini2012efficiency, escanciano2013identification, freyberger2015identification} or by means of proxy variables \citep{miao2018identifying, tchetgen2020introduction, kallus2021causal, cui2024semiparametric}---as well as in missing-not-at-random data scenarios, where shadow variables enable identification \citep{miao2016varieties, li2023non, miao2024identification}. We review three examples next. Inference for general estimads $\theta(P_0)$, under weak assumptions on $h_{0,{P_0}}$ and $g_{0,{P_0}}$, has been the focus of extensive recent work \citep{ghassami2022minimax, bennett2022inference,chernozhukov2023simple, bennett2023source}. The general approach adopted in these papers, often referred to as debiased machine learning (DML), is reviewed in the next section.

\subsection{Examples}\label{sec:examples}

In this section, we provide several examples of parameters satisfying the representation~\eqref{eq:target_param}.

\begin{example}[Proximal Inference for Generalized Average Causal Effects]\label{example_gace}

Suppose that $O=(L,A,Z',W',Y)$, where $A$ is a discrete or continuous treatment, $Y$ an outcome, and $L$ a vector of observed baseline covariates, and $Z'$ and $W'$ are correlates of an unmeasured confounder $U$, referred to as negative control treatments and negative control outcomes, respectively, satisfying $Z' \perp Y \mid A,L,U$ and  $(A,Z') \perp W' \mid L,U$. For each $a\in\mc{A}$, let $Y(a)$ denote the potential outcome under treatment level $a$ and $Y ( \cdot):= \left\{Y(a):a \in \mc{A} \right\}$. Let $O_{\text{full}}=(L,A,Z',W',Y, Y(\cdot))$. Let $P_0$ and $P_{\text{full},0}$ denote the unknown distributions of the factual random vector $O$ and the factual/counterfactual vector $O_{\text{full}}$. Suppose the goal is to estimate a parameter $\gamma(P_{\text{full},0}) := \mathbb{E}_{P_{\text{full},0}}\left\{ Y(A)\pi(A,L)\right\}$ where $\pi:\mc{A}\times\mc{L}\mapsto\mathbb{R}$ is a given specified weighting function. In particular, when $A$ is binary and $\pi(a,l) = 2a - 1$, $\gamma(P_{\text{full},0})$ is the average treatment effect (ATE), and when $\pi(a|l)=a$, $\gamma(P_{\text{full},0})=\E_{P_{full,0}}\{Y(1)\}$. Under the consistency assumption $Y(A)=Y$ and additional regularity conditions which essentially ensure that $Z'$ and $W'$ are strong proxies of the unmeasured confounder $U$, it can be shown (see \cite{miao2018identifying, tchetgen2020introduction, kallus2021causal, ghassami2022minimax, cui2024semiparametric}) that $\gamma(P_{\text{full},0})$ is identified and equal to $\theta(P_0)$ of the form~\eqref{eq:target_param}, with $W=(L,A,W')$, $Z=(L,A,Z')$, $
r(o) = 1$, $\tilde{m}(o;h) =\pi(a,l) \cdot h(w)$, $m(o;g) = y \cdot g(z)$
and corresponding Riesz representers $\tilde\rho_{P_0}(w) = \pi(a,l)/p_{0,A|L,W'}(a|l,w')$ and $\rho_{P_0}(z) = \mathbb{E}_{P_0}\left(Y \mid Z=z\right)$,
where $p_{0,A|L,W'}$ denotes the conditional density or probability mass function of $A$ given $(L,W')$. In this example $X:=(A,L)$. Note that while $\rho_{P_0}$ does not have structural zeros, $\tilde\rho_{P_0}$ will have exactly the same structural zeros as the weight function $\pi(a,l)$. For instance, if $A$ is binary and $\pi(a,l)=a$, then $\tilde\rho_{P_0}(w)=0$ for $w=(a=0,l,w')$. 
\end{example}

\begin{example}[Proximal Inference for Policy Shift Interventions]\label{example_mtp} Suppose that under the same assumptions as in Example~\ref{example_gace}, with $A$ being continuous, we are interested in estimating the parameter $\gamma(P_{\text{full},0}) := \mathbb{E}_{P_{\text{full},0}}\left[ Y\{q(A)\}\right]$ where $q:\mc{A} \rightarrow \mc{A}$ is a known function of the actually received treatment \citep{haneuse2013estimation, diaz2023nonparametric}. For instance, $q$ may take the form $q(a)=\left(a+\delta\right) I\{a\in[c,d-\delta-\varepsilon)\} +  \left\{a+\delta(d-a)/(\delta+\varepsilon)\right\} I\{a\in [d-\delta-\varepsilon, d]\}$ where  $[c,d]$ is the support of $A$ and $\delta$ and $\varepsilon$ are fixed pre-specified constants such that $d-\delta-\varepsilon >c$. \cite{olivas2025chap3} showed that under certain regularity conditions which include the requirement that $q$ be invertible and differentiable almost everywhere, $\theta(P_{full,0})$ is identified and admits the representation~\eqref{eq:target_param}, with
$ 
r(o) = 1$, $\tilde{m}(o;h) =h\{q(a),l,w'\}$, $m(o;g) = y \cdot g(z)$
and corresponding Riesz representers $\rho_{P_0}$ as in Example~\ref{example_gace}---since the map $m(o;g)$ is the same as in that example---and $\tilde\rho_{P_0}(w) = I\{a\in[c+\delta,d]\}\cdot\frac{dq^{-1}(a)}{da}\cdot\frac{p_{0,A|L,W}\{q^{-1}(a)\mid l,w'\}}{p_{0,A|L,W}(a|l,w')}$. Recalling that in this problem $X:=(L,A)$, we see that $\tilde\rho_{P_0}$ is zero outside the set $\{(l,a):a\in[c+\delta,d]\}$, so this last set plays the role of $\mc{X}_0$ for estimation of the solution of equation~\eqref{eq:int_eq_g}.
\end{example}

\begin{example}[Missing Not At Random Data with Shadow Variables]\label{example_mnar}
Suppose that $O=(\Delta,\Delta Y,X,Z')$, where $\Delta$ is binary, $Y$ is an outcome observed if and only if $\Delta = 1$, $X$ is a vector of covariates, and $Z'$ is a variable informative of $Y$, referred to as shadow variable, satisfying $Z'\perp \Delta\mid X,Y$ and $Z'\not\perp Y\mid X,\Delta=1$. Let $O_{full}=(\Delta, Y,X,Z)$ denote the full data vector. Let $P_0$ and $P_{\text{full},0}$ denote the unknown distributions of the observed random vector $O$ and the full data vector $O_{\text{full}}$. Suppose the missingness mechanism is not at random and that the goal is to estimate a parameter $\gamma(P_{\text{full},0}) := \mathbb{E}_{P_{\text{full},0}}\left\{ (1-\Delta)\pi(X, Y) \right\}$ where $\pi:\mc{X}\times\mc{Y}\mapsto\mathbb{R}$ is a given function. In particular, when $\pi(X, Y) = Y$, $\gamma(P_{\text{full},0})/P_0(\Delta=0)$ is the mean outcome among individuals with missing outcomes. Under certain structural assumptions on $\pi$,---namely, that $\pi(X,Y)$ depends only on features of $Y$ that are indirectly captured through $Z'$ given $X$---it can be shown \citep{miao2016varieties, li2023non,miao2024identification} that $\gamma(P_{\text{full},0})$ is identified and equals $\theta(P_0)$ of the form~\eqref{eq:target_param}, with $W=(X,\Delta Y)$, $Z=(X,Z')$,
$ 
r(o)=\delta$, $ \widetilde{m}(o;h)=\delta\pi(x,\delta y)h(x,\delta y)$, $  m(o;g)=(1-\delta)g(x,z)$ and corresponding Riesz representers $\tilde\rho_{P_0}(x,\delta y)=\pi(x,\delta y)\E(\Delta\mid X=x,\Delta Y=\delta y)$ and $\rho_{P_0}(x,z)=\E(1-\Delta\mid X=x,Z=z)$.
\end{example}

\section{Debiased Estimation of $\theta(P_0)$}\label{sec:estimation}

In this section we review the general debiased-machine learning estimation strategy for estimating parameters of the form $\theta(P_0)$ as defined in~\eqref{eq:target_param}. 
It is well known that if an estimator $\hat{h}$ is obtained using flexible statistical methods, the plug-in estimator $\hat{\Psi}(\hat{h}) := n^{-1} \sum_{i=1}^n m(O_i, \hat{h})$ does not generally achieve $\sqrt{n}$--rate convergence to $\theta(P_0)$ even when $\hat{h}$ converges in $\mc{L}^2(P_{0,W})$ to a solution $h_{0,P_0}$. The main reason is that the dependence of
\[
\Psi_{P_0}(\hat{h}) - \Psi_{P_0}(h_{0,P_0}) = \int (\hat{h}-h_{0,P_0})(w)\:\tilde{\rho}_{P_0}(w)\: dP_0(o)
\]
on $\hat{h} - h_{0,P_0}$ causes the plug-in estimator to inherit the bias of $\hat{h}$. Writing
\begin{align*}
\int (\hat{h}-h_{0,P_0})(w)\:\tilde{\rho}_{P_0}(w)\: dP_0(o) &= \int \hat{h}(w)\tilde{\rho}_{P_0}(w) dP_0(o)-\int m(o,g_{0,P_0}) dP_0(o)
\\
&=\int r(o)\hat{h}(w)g_{0,P_0}(z) dP(o)-\int m(o,g_{0,P_0}) dP_0(o)
\end{align*}
suggests estimating the bias of the plug-in estimator with
$
n^{-1}\sum_{i=1}^n\{r(O_i)\hat{h}(W_i)\hat{g}(Z_i)-m(O_i,\hat{g})\}$,
where $\hat{g}$ is an estimator converging to a solution  $g_{0,P}$ constructed using flexible statistical methods. Subtracting this estimated bias from the plug-in estimator, yields the bias-corrected estimator of $\theta(P_0)$ given by $n^{-1}\sum_{i=1}^n \phi(O_i;\hat{h},\hat{g})$, where for any $h$ and $g$,
\begin{equation*}
    \phi(O;h,g) := \tilde{m}(O;h) + m(O;g) - r(O) h(W) g(Z).
\end{equation*}
The map $o \mapsto \phi (o;h_{P_0},g_{P_0}) - \theta(P_0)$ corresponds to an influence function at $P_0$ of the functional $P \mapsto\theta(P)$ in model $\mathcal{P}$ (see, e.g., \cite{kennedy2024semiparametric} and references therein). 

The preceding debiased estimator may still fail to be asymptotically linear when flexible machine learning methods are employed to compute $\hat{h}$ and $\hat{g}$ due to overfitting. A popular approach to addressing this issue is to integrate debiasing with a technique called cross-fitting. Specifically, the sample is partitioned into $K$ roughly equal sized subsamples, the data in all but one sample, say the $k^{th}$ sample, is used to compute the estimators of the nuisance functions, denoted as $\hat{h}^{(-k)}$ and $\hat{g}^{(-k)}$, and the estimator of $\theta(P_0)$ is computed as
$
\hat{\theta}=n^{-1}\sum_{k=1}^K\:\sum_{i\: \text{in sample}\:k}\phi(O_i;\hat{h}^{(-k)},\hat{g}^{(-k)}).$
This estimator belongs to the class of DML estimators \citep{chernozhukov2018double}, and it is also commonly described as an influence-function-based one-step estimator with cross-fitting \citep{robins2008higher}. 

The following theorem establishes the asymptotic behavior of $\hat\theta$, a result that is now standard in the literature on semiparametric and DML estimation, and whose proof we consequently omit (see \cite{robins2008higher, chernozhukov2018double, chernozhukov2023simple} and \cite{kennedy2024semiparametric}). 

\begin{theorem}
   Suppose that $h_{0,P_0}$ and $g_{0,P_0}$ solve equations~\eqref{eq:int_eq_h} and~\eqref{eq:int_eq_g}, respectively, at $P=P_0$. Suppose that $\int\{\phi(o;\hat {h}^{(-k)},\hat{g}^{(-k)}) - \phi(o; h_{0,P_0},g_{0,P_0})\}^2 dP_0(o) = o_P(1)$ holds for each $k \in \{1,...,K\}$. Then, $\sqrt{n}\{ \hat\theta-\theta(P_0)\} =\sqrt{n}\mathbb{E}_{n}%
\left[ \phi (O;h_{0,P_0},g_{0,P_0})-\theta(P_0)
\right] +\sqrt{n}\:R_{n}+o_{P}(1)$, where
    \[
      R_n:=\sum_{k=1}^K\frac{n_k}{n} \:\int r(o) \left\{\hat h^{(-k)}(w)-h_{0,P_0}(w)\right\}\left\{g_{0,P_0}(z)-\hat g^{(-k)}(z)\right\}dP_0(o),
    \]
with $n_k$ denoting the size of the $k^{th}$ subsample. In particular, if $\E_{P_0}\{\phi(O;h_{0,P_0},g_{0,P_0})^2\}<\infty$ and $R_n=o_p(n^{-1/2})$, then $ 
    \sqrt{n}\{ \hat\theta-\theta(P_0)\}\stackrel{\text{d}}{\longrightarrow}\mc{N}(0,\tau_0^2),
    $
where $\tau_0 ^{2}=\E_{P_0}[ \{ \phi (O;h_{0,P_0},g_{0,P_0})-\theta(P_0)\}
^{2}] $.
\label{theo:asymp_distr}
\end{theorem}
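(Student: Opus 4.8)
The plan is to decompose the estimation error through a cross-fitting argument, handling each fold's contribution separately and then aggregating. First I would fix a fold $k$ and write $\hat\theta - \theta(P_0) = \sum_{k=1}^K (n_k/n) \cdot \{n_k^{-1}\sum_{i\in k} \phi(O_i;\hat h^{(-k)},\hat g^{(-k)}) - \theta(P_0)\}$, so it suffices to analyze each inner sum. For a given fold, since $\hat h^{(-k)}$ and $\hat g^{(-k)}$ are computed from the complementary subsample, I would condition on the complementary data $\mathcal{D}_{-k}$; then the summands indexed by $i \in k$ are i.i.d.\ draws from $P_0$ with $\hat h^{(-k)}, \hat g^{(-k)}$ treated as fixed functions. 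The key algebraic identity is that, for fixed $h, g$,
\[
\mathbb{E}_{P_0}\{\phi(O;h,g) - \theta(P_0)\} = \int r(o)\{h(w) - h_{0,P_0}(w)\}\{g_{0,P_0}(z) - g(z)\}\,dP_0(o),
\]
which follows by expanding $\phi$, using the triple representation~\eqref{eq:target_param} for $\theta(P_0)$, and invoking the defining integral equations~\eqref{eq:int_eq_h} and~\eqref{eq:int_eq_g} together with the fact (argued in Section~\ref{sec:targer_param}) that the relevant moments do not depend on which solutions are chosen; this bilinearity is exactly the structural property that makes debiasing work.

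Next I would add and subtract the centered version: write $n_k^{-1}\sum_{i\in k}\phi(O_i;\hat h^{(-k)},\hat g^{(-k)}) - \theta(P_0)$ as the sum of (a) the empirical process term $(\mathbb{E}_{n,k} - \mathbb{E}_{P_0})\{\phi(O;\hat h^{(-k)},\hat g^{(-k)})\}$, (b) the empirical process term $(\mathbb{E}_{n,k} - \mathbb{E}_{P_0})\{\phi(O;h_{0,P_0},g_{0,P_0})\}$, and (c) the bias term $\mathbb{E}_{P_0}\{\phi(O;\hat h^{(-k)},\hat g^{(-k)}) - \theta(P_0)\}$. Term (c) is precisely the fold-$k$ contribution to $R_n$ by the identity above. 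For the difference (a) minus (b), I would argue conditionally on $\mathcal{D}_{-k}$ that this is a centered empirical average of the i.i.d.\ variables $\phi(O_i;\hat h^{(-k)},\hat g^{(-k)}) - \phi(O_i;h_{0,P_0},g_{0,P_0})$ whose conditional second moment is $o_P(1)$ by hypothesis; hence by Chebyshev (conditionally) this term is $o_P(n^{-1/2})$, and unconditioning via the bounded-in-probability-to-$o_P$ lemma preserves the rate. Summing over $k$ with the weights $n_k/n \to 1/K$ collapses these pieces into $\sqrt n\,\mathbb{E}_n[\phi(O;h_{0,P_0},g_{0,P_0}) - \theta(P_0)] + \sqrt n R_n + o_P(1)$, giving the first claim.

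For the second claim, under $\mathbb{E}_{P_0}\{\phi(O;h_{0,P_0},g_{0,P_0})^2\} < \infty$ the leading term $\sqrt n\,\mathbb{E}_n[\phi(O;h_{0,P_0},g_{0,P_0}) - \theta(P_0)]$ is a normalized sum of i.i.d.\ mean-zero variables (mean zero because $\mathbb{E}_{P_0}\phi(O;h_{0,P_0},g_{0,P_0}) = \theta(P_0)$ by the triple representation), so the classical Lindeberg--L\'evy central limit theorem gives convergence to $\mathcal{N}(0,\tau_0^2)$ with $\tau_0^2 = \mathrm{Var}_{P_0}\{\phi(O;h_{0,P_0},g_{0,P_0})\}$; adding $\sqrt n R_n = o_P(1)$ and the $o_P(1)$ remainder and applying Slutsky finishes the proof.

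The main obstacle I anticipate is the careful handling of the conditional argument for the empirical-process difference term (a)$-$(b): one must be precise that "conditional second moment $o_P(1)$" plus Chebyshev yields an unconditional $o_P(n^{-1/2})$, which is a standard but slightly delicate step (it uses that if $\mathbb{E}[Y_n^2 \mid \mathcal{D}_{-k}] = o_P(1)$ then $Y_n = o_P(1)$ via dominated convergence on the bounded quantity $\min(1, \cdot)$, or equivalently a subsequence argument). Everything else — the bilinear identity, the decomposition, the CLT — is routine, which is why the authors omit the proof and cite the standard DML references.
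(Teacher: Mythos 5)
Your proof is correct and is precisely the standard cross-fitting DML argument that the paper explicitly omits and defers to the cited references (Chernozhukov et al., Kennedy): the exact decomposition into the oracle empirical average, the bias term identified via the bilinear identity $\E_{P_0}\{\phi(O;h,g)\}-\theta(P_0)=\int r(o)\{h(w)-h_{0,P_0}(w)\}\{g_{0,P_0}(z)-g(z)\}\,dP_0(o)$, and the conditional-Chebyshev control of the empirical-process remainder, followed by the CLT and Slutsky. No gaps; the one delicate step you flag (passing from conditional to unconditional $o_P$) is handled correctly.
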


Theorem \ref{theo:asymp_distr} highlights the importance of constructing estimators $\hat h^{(-k)}$ and $\hat g^{(-k)}$ that converge sufficiently quickly to solutions $h_{0,P_0}$ and $g_{0,P_0}$ of the integral equations~\eqref{eq:int_eq_h} and~\eqref{eq:int_eq_g}, respectively, in order to ensure that $R_n=o_P(n^{-1/2})$. As noted in \cite{bennett2022inference} and \cite{chernozhukov2023simple}, a sufficient condition on the convergence rates of the nuisance estimators to ensure that $R_n=o_P(n^{-1/2})$ can be derived from the following derivation: 
    
    \begin{align*}
  \vert R_n\vert &\le \sum_{k=1}^K\frac{n_k}{n} \: 
   \Bigg| \int \Bigg[ \int r(o)\left\{\hat h^{(-k)}(w)-h_{0,P_0}(w)\right\}dP_0(o|z)\Bigg]\left\{g_{0,P_0}(z)-\hat g^{(-k)}(z)\right\}dP_0(z) \Bigg|
  \\
  & \le \sum_{k=1}^K\frac{n_k}{n} \left\Vert \mc{T}_{P_0} (\hat h^{(-k)} -h_{0,P_0}) \right\Vert_2 \left\Vert \hat g^{(-k)} -g_{0,P_0} \right\Vert_2
\end{align*}
where the inequality follows from the Cauchy–Schwarz inequality. A symmetric bound can be obtained by integrating first with respect to $dP_0(o|w)$. Combining both bounds, yields:
\[
\vert R_n \vert  \le \sum_{k=1}^n \frac{n_k}{n}\min\{\Vert \mc{T}_{P_0} (\hat h^{(-k)} -h_{0,P_0}) \Vert_2\Vert \hat g^{(-k)} -g_{0,P_0} \Vert_2, \Vert \mc{T}^*_{P_0} (\hat g^{(-k)} -g_{0,P_0})\Vert_2\Vert \hat h^{(-k)} -h_{0,P_0}\Vert_2\}
\] 
Thus, we upper bound $R_n$ with the minimum of two products: the weak error of one estimator times the RMSE of the other--by weak error, say of estimator $\hat{h}^{(k)}$, we mean $\Vert \mc{T}_{P_0}(\hat{h}^{(k)}-h_{0,P_0})\Vert_2$. We can also upper bound $R_n$ using the looser inequality:
\[
\vert R_n \vert \le \Vert r \Vert_\infty\sum_{k=1}^n \frac{n_k}{n} \left\Vert \hat g^{(-k)} -g_{0,P_0}\right \Vert_2 \left\Vert \hat h^{(-k)} -h_{0,P_0}\right\Vert_2
\]

This later bound is less sharp than the former, since weak errors tend to be substantially smaller than RMSEs. This observation highlights a limitation of the existing analyses of KMMR estimators because they provide only bounds for RMSE: relying exclusively on RMSE bounds can lead to overly stringent smoothness conditions when establishing $\sqrt{n}$--consistency of the DML estimator of $\theta(P_0)$.

\section{Adversarial Estimation of the Nuisance Functions}\label{sec:estimation_nuisance_fns}

For the remainder of the paper, we focus on estimating the nuisance function that solves equation~\eqref{eq:int_eq_h}. The same analysis applies to equation~\eqref{eq:int_eq_g} after an appropriate redefinition of the relevant objects. Furthermore, to simplify notation, throughout we will omit the subscript $P$ from the operator $\mc{T}_{P}$, the solution $h_{0,P}$, the Riesz representer $\rho_P$, and all other operators and quantities, e.g. expectations, defined throughout that depend on $P$.

Estimating solutions to integral equations is inherently challenging due to their ill-posed nature: small perturbations in the input function $\rho$ can lead to large deviations in the solution. A naive way to circumvent this instability is to restrict attention to a sufficiently small subspace $\mathcal{H}$ of $\mathcal{L}^2(P_W)$ in which the inverse problem becomes well-posed. When $\mathcal{H}$ is finite-dimensional, the resulting estimator is computationally tractable, but the approach is theoretically unsatisfying as it relies on the strong assumption that a solution lies in the small subspace $\mc{H}$. 

A broadly applicable alternative strategy is to regularize the estimation problem. This can be done by either allowing the finite-dimensional space $\mathcal{H}$ to grow with the sample size, as in sieve-based methods, or by penalizing the complexity of functions in an infinite-dimensional space $\mathcal{H}$, such as an RKHS. Both strategies yield estimators that approximate the solution of the original ill-posed problem as the level of regularization is gradually reduced \citep{carrasco2007linear, cavalier2011inverse}. From a unified perspective, several regularization approaches can be viewed as targeting solutions to surrogate regularized optimization problems of the form

\begin{equation}
    \arg\min_{h \in \mathcal{H}} \left\{ \| \rho - \mc{T} I_0h \|^2 + \lambda_{\mc{H}} \| h-h^\blacklozenge \|^2 \right\},\label{eq:regularized_problem}
\end{equation}
where $h^\blacklozenge$ is a fixed reference function, typically set to zero. These estimators differ in their choice of function class $\mathcal{H}$ (typically a subset of a normed space), in whether they include a penalization term (i.e., whether $\lambda_{\mathcal{H}}$ is positive), and in the choice of norms used for both the loss term $\| \rho - \mc{T} I_0h \|^2$ and the regularization term $\| h -h^\blacklozenge \|^2$.

When $\lambda_{\mathcal{H}} = 0$, a common choice is to take $\mathcal{H}$ as a finite-dimensional subspace spanned by the first $p$ elements of an orthonormal basis of $\mathcal{L}^2(P_W)$ and to use the $\mathcal{L}^2(P_W)$--norm for the loss. In this setting, regularization is relaxed by allowing the dimension $p$ to grow with the sample size. Estimators constructed in this way are referred to as sieve-based and include methods proposed in \citet{newey2003instrumental} and \citet{ai2003efficient, ai2007estimation}. In contrast, when $\lambda_{\mathcal{H}} \neq 0$, the approach is referred to as Tikhonov regularization. An influential earlier example of this approach is the work of \citet{chen2012estimation}, who consider nonparametric conditional moment models and apply Tikhonov regularization by penalizing the $\mathcal{L}^2$--norm of the solution in a sieve space.
 
 %In sieve-based regularization, $\lambda_{\mathcal{H}}$ may or may not be set to 0,  and $\mathcal{H}$ is chosen to be a finite-dimensional subspace spanned by the first $p$ elements of an orthonormal basis of $\mc{L}^2(P_W)$ and the loss norm is the $\mc{L}^2$-norm. The regularization is relaxed by letting $p$ grow \citep{newey2003instrumental, ai2003efficient, ai2007estimation}.  

In more recent proposals, $\mathcal{H}$ is typically taken to be an infinite-dimensional function class, often restricted via a norm constraint, or a very high-dimensional normed space, like  shape constrained functions, neural networks (NN), RKHSs, and random forests. The penalty parameter $\lambda_{\mathcal{H}}$ is strictly positive and regularization is relaxed by letting it decrease to $0$. For instance, \citet{dikkala2020minimax} allow $\mathcal{H}$ to be any of the aforementioned spaces; \citet{ghassami2022minimax} focus on RKHSs; and \citet{bennett2022inference} consider both RKHSs and NNs. In all of these papers the loss is measured with the $\mathcal{L}^2$--norm, while the penalty is defined via the norm of the function space $\mc{H}$ with $h^\blacklozenge =0$. By contrast, other recent approaches use the $\mathcal{L}^2$--norm for both the loss and the penalty terms. This includes \citet{liao2020provably}, who choose $\mc{H}$ as NNs; and \citet{bennett2023source}, who consider norm-constrained high-dimensional linear models, RKHSs, and NNs. 
The latter also considers an iterated Tikhonov approach, starting with an initial estimator targeting the solution $h_\lambda^{(0)}$ to the regularized problem with $h^\blacklozenge = 0$. At each subsequent step, the estimator targets the solution $h_\lambda^{(k)}$ to the same problem but with $h^\blacklozenge$ set to the previous iterate $h_\lambda^{(k-1)}$. This process is repeated for a fixed number of iterations or an adaptively chosen stopping point.

In the Maximal Moment Restriction (MMR) proposals of \cite{muandet2020kernel, mastouri2021proximal,kallus2021causal, zhang2023instrumental}; and \cite{park2024proximal}, the surrogate problem takes the form:
\begin{equation*}
\arg\min_{h\in\mc{H}}\max_{g\in\mc{G}}\left[\E\{m(O;g)-h(W)g(Z)\}\right]^2+\lambda_\mc{H}\Vert h\Vert^2.
\end{equation*}
We discuss the MMR proposals in Section~\ref{sec:compare_mmr}.

In the next subsection, we review the class of RAS estimators. To avoid notational burden, we will drop the superscript $(-k)$ from $\hat{h}^{(-k)}$ and the subscript $k$ from $n_k$. Furthermore, we will use $\mathbb{E}_n$ to denote the empirical mean operator calculated with data in the entire sample but the $k^{th}$ sample.

 \subsection{Motivation and Rationale for Regularized Adversarial Stabilized Estimation Strategies}\label{sec:adversarial_estimators}

In addition to the statistical advantages discussed in the introduction—most notably, their ability to adaptively focus on directions where the conditional moment restriction is most difficult to satisfy—adversarial estimation strategies offer two key computational benefits. First, they bypass the need to estimate the conditional mean operator $\mathcal{T}$ and their convergence does not depend on smoothness assumptions on the conditional distribution of $W$ given $Z$. Second, they do not require prior knowledge of the functional form of the Riesz representer $\rho$, which facilitates greater automation of the estimation procedure. Methods that do not require knowledge of the functional form of the Riesz representer are referred to in the literature as auto-debiased machine learning (auto-DML). They were first proposed in the case where $X = W = Z$ by \cite{chernozhukov2022riesznet, chernozhukov2022automatic, chernozhukov2022debiased}, and later extended to the setting where $W$ and $Z$ are distinct subvectors of $O$ by \cite{bennett2022inference,bennett2023source}.

The RAS estimation strategies we introduce next, unlike the alternative MMR class of adversarial estimators studied in \cite{muandet2020kernel,zhang2023instrumental, mastouri2021proximal}; and \cite{park2024proximal}, have the advantage that existing analytical results provide bounds for the weak error. As discussed in Section~\ref{sec:estimation}, this makes it possible to establish root--$n$ consistency of the resulting estimator of $\theta(P_0)$ under weaker conditions on the solutions to equations~\eqref{eq:int_eq_h} and~\eqref{eq:int_eq_g}.

Quite generally, RAS estimators are of the form $\hat h(W) =I_0(X)\tilde h(W)$ where
\begin{align}
\tilde h:=\arg\min_{h\in\mc{H}}\max_{g\in\mc{G}}[&\E_n\{m(O;g)-I_0(X)r(O)h(W)g(Z)-c^2 I_0(X)g(Z)^2\}-\lambda_{\mc{G}}\Vert g\Vert^2+\lambda_{\mc{H}}\Vert h\Vert^2]
 \label{def:adversarial}
\end{align}
for some constant $c\ne0$ and, $\mc{G}$ and $\mc{H}$ specified function classes \citep{muandet2020dual,dikkala2020minimax, liao2020provably, kallus2021causal, ghassami2022minimax, bennett2022inference, bennett2023source}. As noted earlier, \cite{bennett2023source} also consider iterated Tikhonov estimators with penalties centered at the estimator from the previous iteration.  
The aforementioned proposals differ on the choice of norms $\Vert\cdot\Vert$ for $g$ and $h$. Additionally, some proposals set $\lambda_{\mc{H}}=0$ and/or $\lambda_{\mc{G}}=0$. 

To motivate the RAS approach we first observe that for any $g \in L^2(P_Z)$, it holds that
$\mathbb{E}[m(O;g)]=\mathbb{E}[\rho(Z)g(Z)]
=\mathbb{E}[r(O) h(W)g(Z)]$,
where the first equality is because $\rho$ is the Riesz representer, and the second is by $h_0$ satisfying equation~\eqref{eq:int_eq_h}. In particular, this implies that for any $g \: \in \: \mc{L}^2(P_Z)$, the residual
\begin{equation}
     m(O;g)-I_0(X) r(O) g(Z)h(W) \label{eq:residuals}
\end{equation}
has mean equal to 0 at $h=h_0^\dag$ where $h_0=I_0h_0^\dag$ is a solution of equation~\eqref{eq:int_eq_h}. Consequently, 
\begin{equation*}
Q_{\mathbb{J}}(h):=\sum_{j=1}^{\mathbb{J}} \left\{\mathbb{E}[m(O;g_j)-I_0(X) r(O) g_j(Z)h(W)]\right\}^2
\end{equation*}
for given functions $g_j, j = 1, \dots , J $ where $\mathbb{J} \subseteq \mathbb{N}$, attains its minimum at $h_0^\dag$. One can then use the empirical version of $Q_{\mathbb{J}}(h)$ as the basis for deriving estimators of $h_{0}^{\dag}$--a strategy that underlies the classical GMM approach. The functions $g_j$ are known in the machine learning literature as $\textit{test functions}$. The question then is: how many and which test functions do we choose? The following result sheds light into the answer. In what follows, given a subspace $\mc{G}$ of $\mc{L}^2(P_Z)$, we let $I_0\:\mc{G}:=\left\{I_0\:g\::\:g\in\mc{G}\right\}$
and we let $\overline{I_0\mc{G}}$ denote its closure in $\mc{L}^2(P_Z)$. 
\begin{proposition}    
Suppose $\{g_j=I_0g_j^\dag:g_j^\dag \in \mc{G},j\in\mathbb{J}\}$ is an $\mc{L}^2(P_Z)$--orthonormal basis of $\overline{I_0\mc{G}}$. Let $\mathcal{P}_{\overline{I_0\mc{G}}}:\mc{L}^2(P_Z) \rightarrow \overline{I_0\mc{G}}$  denote the projection operator. Then 
$ 
Q_{\mathbb{J}}(h)=\Vert\mathcal{P}_{\overline{I_0\mc{G}}} \mc{T} I_0(h_0^\dag-h)\Vert_2^2. $\label{prop:sum_squares}
\end{proposition}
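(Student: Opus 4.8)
The plan is to identify each summand of $Q_{\mathbb{J}}(h)$ with an $\mc{L}^2(P_Z)$-inner product against the basis function $g_j$, and then recognize $Q_{\mathbb{J}}(h)$ as the squared norm of an orthogonal projection via Parseval's identity. Concretely, I would show that the $j$-th summand equals $\langle \mc{T}I_0(h_0^\dag-h),\, g_j\rangle_2$ and then sum the squares.

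First I would rewrite the ``moment'' piece. Since $g\mapsto\Phi_P(g)=\mathbb{E}\{m(O;g)\}$ is bounded and linear on $\mc{L}^2(P_Z)$ with Riesz representer $\rho$, we have $\mathbb{E}\{m(O;g_j)\}=\langle\rho,g_j\rangle_2$. Because $h_0=I_0h_0^\dag$ solves equation~\eqref{eq:int_eq_h} (and, in the structural-zero case, $h_0^\dag$ is chosen so that $\mc{T}I_0h_0^\dag$ agrees with $\rho$ on $\{X\in\mc{X}_0\}$ while both vanish off $\mc{X}_0$), we have $\rho=\mc{T}I_0h_0^\dag$ a.e. $(P_Z)$, hence $\mathbb{E}\{m(O;g_j)\}=\langle\mc{T}I_0h_0^\dag,g_j\rangle_2$. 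Next I would handle the second piece: since $X$ is a subvector of $Z$, $I_0(X)$ is $\sigma(Z)$-measurable, so by the tower property and the definition of $\mc{T}$, $\mathbb{E}\{I_0(X)r(O)g_j(Z)h(W)\}=\mathbb{E}\big[g_j(Z)\,\mathbb{E}\{r(O)I_0(X)h(W)\mid Z\}\big]=\langle\mc{T}I_0h,g_j\rangle_2$. Here I would note in passing that $\mc{T}I_0h$ and $\mc{T}I_0h_0^\dag$ genuinely lie in $\mc{L}^2(P_Z)$: this follows from $\Vert r\Vert_\infty<\infty$, $|I_0|\le1$, Jensen's inequality for conditional expectations, and $h,h_0^\dag\in\mc{L}^2(P_W)$. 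Subtracting and using linearity of $h\mapsto\mc{T}I_0h$, the $j$-th summand is $\langle\mc{T}I_0h_0^\dag-\mc{T}I_0h,\,g_j\rangle_2=\langle\mc{T}I_0(h_0^\dag-h),\,g_j\rangle_2$.

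Finally, with $f:=\mc{T}I_0(h_0^\dag-h)\in\mc{L}^2(P_Z)$, I would decompose $f=\mathcal{P}_{\overline{I_0\mc{G}}}f+(f-\mathcal{P}_{\overline{I_0\mc{G}}}f)$, the second term being orthogonal to $\overline{I_0\mc{G}}$, so that $\langle f,g_j\rangle_2=\langle\mathcal{P}_{\overline{I_0\mc{G}}}f,g_j\rangle_2$ for every $j\in\mathbb{J}$. Since $\{g_j:j\in\mathbb{J}\}$ is an $\mc{L}^2(P_Z)$-orthonormal basis of $\overline{I_0\mc{G}}$, Parseval's identity yields $Q_{\mathbb{J}}(h)=\sum_{j\in\mathbb{J}}\langle f,g_j\rangle_2^2=\sum_{j\in\mathbb{J}}\langle\mathcal{P}_{\overline{I_0\mc{G}}}f,g_j\rangle_2^2=\Vert\mathcal{P}_{\overline{I_0\mc{G}}}f\Vert_2^2$, which is exactly the claimed identity.

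I do not anticipate a genuine obstacle: this is essentially a bookkeeping argument. The only points demanding care are (i) verifying that $\mc{T}I_0h$, $\mc{T}I_0h_0^\dag$, and $\rho$ all belong to $\mc{L}^2(P_Z)$, so the inner-product manipulations and Parseval's identity are legitimate; and (ii) tracking the indicator $I_0(X)$—in particular, exploiting that it is measurable with respect to both $\sigma(W)$ and $\sigma(Z)$ so it can be moved freely through the conditional expectation defining $\mc{T}$, and noting $I_0^2=I_0$ so that writing $g_j=I_0g_j^\dag$ causes no inconsistency in the cross term.
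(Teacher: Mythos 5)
Your proposal is correct and follows essentially the same route as the paper's proof: identify each summand of $Q_{\mathbb{J}}(h)$ with the inner product $\langle \mc{T}I_0(h_0^\dag-h),\,g_j\rangle_2$ via the Riesz representer property of $\rho$ and the tower property (using that $I_0(X)$ is $\sigma(Z)$-measurable and $I_0^2=I_0$), then apply Parseval's identity for the orthonormal basis of $\overline{I_0\mc{G}}$. The only difference is cosmetic—you argue from $Q_{\mathbb{J}}(h)$ toward the projection norm rather than the reverse, and you spell out the integrability checks the paper leaves implicit.
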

To understand the relevance of this result avoiding distracting heavy notation, suppose for the purpose of this discussion that $I_0 = 1$ and that the integral equation~\eqref{eq:int_eq_h} admits a unique solution $h_0$ lying in a known function class $\mathcal{H}$. Proposition~\ref{prop:sum_squares} implies that estimators minimizing $Q_{\mathbb{J}}(h)$ over $\mathcal{H}$ implicitly target solutions to the projected equation
\begin{equation}
\mathcal{P}_{\overline{\mathcal{G}}} \circ \mathcal{T} \: h = \mathcal{P}_{\overline{\mathcal{G}}} \: \rho,\label{eq:projected_int_eq}
\end{equation}
where $\overline{\mathcal{G}}$ denotes the closure of the linear span of the test functions $\{g_j\}_{j \in \mathbb{J}}$. If $\overline{\mathcal{G}}$ is a strict subspace of $\mathcal{L}^2(P_Z)$, then any function of the form $h_0 + h^\diamond$, with $\mathcal{T} h^\diamond \in \text{Null}(\mathcal{P}_{\overline{\mathcal{G}}})$, also solves~\eqref{eq:projected_int_eq}. Consequently, minimizing $Q_{\mathbb{J}}$ does not distinguish $h_0$ from these alternatives.

This ambiguity cannot be resolved by selecting the minimum norm solution of~\eqref{eq:projected_int_eq}, as this solution will generally differ from $h_0$, the unique solution of the original problem~\eqref{eq:int_eq_h}. To see this, suppose that $W$ and $Z$ have no overlap (i.e., $X$ is null), in which case $\mathcal{T}:\mc{L}^2(P_W) \rightarrow \mc{L}^2(P_Z)$ is compact. Let $(\mu_i, \varphi_i, \psi_i)$ for $i = 1, 2, \dots$ denote the singular value decomposition of $\mathcal{T}$. Since, by assumption, equation~\eqref{eq:int_eq_h} has a solution, then the Riesz representer $\rho$ lies in the range of $\mathcal{T}$, and can therefore be written as
$ 
\rho = \sum_{i:\mu_i \neq 0} \langle \rho, \psi_i \rangle_2 \: \psi_i$.
The minimum-norm solution to the projected equation~\eqref{eq:projected_int_eq} is then $ 
h_0^{\text{proj}} = \sum_{i:\: \mu_i \neq 0 \:,\: \psi_i \in \overline{\mathcal{G}}} \: \mu_i^{-1} \:{\langle \rho, \psi_i \rangle_2} \: \varphi_i$, 
while the solution of the equation of interest~\eqref{eq:int_eq_h} is $ 
h_0 = \sum_{i: \mu_i \neq 0}\: \mu_i^{-1} \:{\langle \rho, \psi_i \rangle_2} \:\varphi_i $.
If $\rho \notin \overline{\mathcal{G}}$, then there exists some $i$ for which $\langle \rho, \psi_i \rangle_2 \neq 0$ and $\psi_i \notin \overline{\mathcal{G}}$, meaning that $h_0^{\text{proj}}$ omits the corresponding direction $\varphi_i$. In the machine learning literature, this is referred to as \textit{misspecification of the test class} $\mathcal{G}$. In particular, if $\rho$ has nonzero projection onto infinitely many directions $\psi_i$, then any finite-dimensional test class $\mathcal{G}$ will necessarily be misspecified. This is the case in sieve-based GMM estimation, where the dimension of $\mathcal{G}$ increases with sample size, and the test functions $\{g_j\}$ are chosen to minimize approximation error. However, for any fixed sample size, the sieve space cannot capture all directions—especially the high-frequency components of $\rho$—and as a result, the corresponding estimator will fail to recover those components of the true solution $h_0$.

To avoid misspecification, the test class $\mathcal{G}$ must be dense in $\mathcal{L}^2(P_Z)$. However, when $Z$ has continuous components, this requires indexing the class by the entire natural numbers, i.e., $\mathbb{J} = \mathbb{N}$, implying that the objective function $Q_{\mathbb{J}}$ involves an infinite sum. As a result, the empirical version of this criterion becomes computationally infeasible, since one cannot evaluate or minimize an infinite sum in practice. Remarkably, when $\mc{T}(h_0-h)$ lies in $\mc{G}$ there exists an equivalent reformulation that avoids direct evaluation of the infinite sum. Specifically, let $c$ be any non-zero constant and define
\[
M_P(h) := 4c^2 \max_{g\in \mc{G}} \mathbb{E}_P\left\{m(O;g)-I_0(X)r(O)h(W)g(Z)-c^2I_0(X)g(Z)^2\right\}.
\]

\begin{proposition}
   For any constant $c \ne0$, any $h_0^\dag$ such that  $h_0=I_0h_0^\dag$ is a solution of equation~\eqref{eq:int_eq_h}, and any $h\in \mc{L}^2(P_{W})$, it holds that:
    \begin{enumerate}
    \item 
    If $\mc{G}\subset\mc{L}^2(P_Z)$ such that $(2c^2)^{-1}\mc{T}(h_0^\dag-h)\in\mc{G}$ then $ 
         M_P(h) =\Vert I_0\mc{T}(h_0^\dag-h)\Vert_2^2.\label{eq:mph} $ 
    \item 
    Suppose $\mc{G}$ is a subspace of $\mc{L}^2(P_{Z})$. Let $I_0\mc{G}:=\left\{I_0g\::\:g\in\mc{G}\right\}$ and let $\overline{I_0\mc{G}}$ denote its closure in $\mc{L}^2(P_Z)$. Let $\{g_j=I_0g_j^\dag:g_j^\dag \in \mc{G},\:j\in\mathbb{J}\}$ denote an $\mc{L}^2(P_Z)$--orthonormal basis of $\overline{I_0\mc{G}}$ where $\mathbb{J}\subseteq\mathbb{N}$. If  $\mc{P}_{\overline{I_0\mc{G}}}\circ\mc{T}_{P}I_0(h_0^\dag-h)\in\ I_0\:\mc{G}$, then $ 
    Q_{\mathbb{J}}(h)=M_P(h)$. 
    \end{enumerate}
    \label{prop:innermax}
\end{proposition}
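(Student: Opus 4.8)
The plan is to treat both parts as exercises in the calculus of the inner maximization, using the fact that for each fixed $h$ the map $g \mapsto \mathbb{E}_P\{m(O;g) - I_0(X) r(O) h(W) g(Z) - c^2 I_0(X) g(Z)^2\}$ is, by linearity of $g\mapsto m(o;g)$, a concave quadratic functional of $g$ in $\mc{L}^2(P_Z)$. First I would rewrite the linear part using the Riesz representer and equation~\eqref{eq:int_eq_h}: since $\mathbb{E}_P\{m(O;g)\} = \mathbb{E}_P\{\rho(Z) g(Z)\} = \mathbb{E}_P\{r(O) h_0(W) g(Z)\}$ with $h_0 = I_0 h_0^\dag$, the objective inside the max becomes $\mathbb{E}_P\{I_0(X) r(O)(h_0^\dag - h)(W) g(Z)\} - c^2 \mathbb{E}_P\{I_0(X) g(Z)^2\}$. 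Now observe that $\mathbb{E}_P\{I_0(X) r(O)(h_0^\dag - h)(W) g(Z)\} = \mathbb{E}_P\{\mc{T}(h_0^\dag - h)(Z)\, I_0(X) g(Z)\} = \langle I_0 \mc{T}(h_0^\dag - h),\, g\rangle_2$ (moving $I_0$ onto either factor, since $I_0^2 = I_0$), while $\mathbb{E}_P\{I_0(X) g(Z)^2\} = \|I_0 g\|_2^2$. So, writing $u := I_0 \mc{T}(h_0^\dag - h) \in \mc{L}^2(P_Z)$, the quantity inside the max is exactly $\langle u, g\rangle_2 - c^2 \|I_0 g\|_2^2$.

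For part~1, the hypothesis is that $(2c^2)^{-1} u = (2c^2)^{-1}\mc{T}(h_0^\dag - h) \in \mc{G}$ — here $\mc{G}$ need not be a subspace, so I would be careful to use only that this specific element lies in $\mc{G}$. Completing the square gives $\langle u, g\rangle_2 - c^2\|I_0 g\|_2^2 = \frac{1}{4c^2}\|u\|_2^2 - c^2\|I_0 g - (2c^2)^{-1} u\|_2^2$ whenever $u = I_0 u$ (which holds here since $u = I_0 \mc{T}(\cdot)$). The second term is maximized (equal to zero) by taking $g$ with $I_0 g = (2c^2)^{-1} u$; the natural candidate is $g = (2c^2)^{-1}\mc{T}(h_0^\dag - h)$, which by hypothesis lies in $\mc{G}$ and satisfies $I_0 g = (2c^2)^{-1} I_0 \mc{T}(h_0^\dag-h) = (2c^2)^{-1} u$. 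Hence $\max_{g\in\mc{G}} = \frac{1}{4c^2}\|u\|_2^2$, so $M_P(h) = 4c^2 \cdot \frac{1}{4c^2}\|u\|_2^2 = \|I_0\mc{T}(h_0^\dag - h)\|_2^2$, as claimed.

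For part~2, $\mc{G}$ is now a subspace, and I would expand $g$ in the orthonormal basis $\{g_j\}_{j\in\mathbb{J}}$ of $\overline{I_0\mc{G}}$. Writing the inner expectation in terms of this basis, the cross term $\langle u, g\rangle_2$ only sees the component of $u$ in $\overline{I_0\mc{G}}$, i.e. $\mc{P}_{\overline{I_0\mc{G}}} u = \mc{P}_{\overline{I_0\mc{G}}}\mc{T} I_0(h_0^\dag - h)$ (using $u = I_0 \mc{T}(h_0^\dag-h)$ and that $I_0$ can be absorbed). The hypothesis that $\mc{P}_{\overline{I_0\mc{G}}}\circ \mc{T}_P I_0(h_0^\dag - h) \in I_0\mc{G}$ is precisely what guarantees the maximizing $g$ is attainable within $\mc{G}$ — without it, the supremum over the subspace would still equal $\frac14 c^{-2}$ times the squared norm of the projection but might not be attained, or more to the point, one wants the finite-sum identity. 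The same square-completion as in part~1, now with the maximizing $I_0 g$ equal to $(2c^2)^{-1}\mc{P}_{\overline{I_0\mc{G}}} u$, yields $M_P(h) = \|\mc{P}_{\overline{I_0\mc{G}}}\mc{T} I_0(h_0^\dag - h)\|_2^2$; and by Parseval this equals $\sum_{j\in\mathbb{J}} \langle \mc{T} I_0(h_0^\dag-h), g_j\rangle_2^2 = \sum_{j\in\mathbb{J}} (\mathbb{E}_P\{I_0(X) r(O)(h_0^\dag-h)(W) g_j(Z)\})^2 = Q_{\mathbb{J}}(h)$, invoking Proposition~\ref{prop:sum_squares} for the last identification.

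The main obstacle is bookkeeping with the indicator $I_0$ — keeping straight when $I_0$ can be moved between the two factors of an inner product, when $g$ versus $I_0 g$ appears, and verifying that the hypothesized membership conditions ($(2c^2)^{-1}\mc{T}(h_0^\dag-h)\in\mc{G}$ in part~1, $\mc{P}_{\overline{I_0\mc{G}}}\mc{T} I_0(h_0^\dag-h)\in I_0\mc{G}$ in part~2) are exactly what is needed for the optimal $g$ to be admissible so that the completed-square maximum is genuinely attained. The quadratic-maximization step itself and the Parseval identity are routine.
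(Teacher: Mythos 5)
Your proposal is correct and follows essentially the same route as the paper: rewrite the inner objective via the Riesz representer and equation~\eqref{eq:int_eq_h}, complete the square, and use the membership hypotheses ($(2c^2)^{-1}\mc{T}(h_0^\dag-h)\in\mc{G}$ in part~1, $\mc{P}_{\overline{I_0\mc{G}}}\mc{T}I_0(h_0^\dag-h)\in I_0\mc{G}$ in part~2) to show the maximizer is attainable, finishing part~2 with the Pythagorean/Parseval identity and Proposition~\ref{prop:sum_squares}. The only cosmetic difference is that you phrase the argument in inner-product notation and project inside the cross term, whereas the paper phrases the same step as a variational characterization of the projection; the substance is identical.
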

Proposition~\ref{prop:innermax} is ultimately a consequence of the Fenchel conjugate formulation of the square function \citep{borwein2006convex}. Part 1 of this proposition has been shown by \cite{dikkala2020minimax} and \cite{kallus2021causal}. Part 2 of the proposition has been established in the finite-dimensional setting by \cite{bennett2019deep} and \cite{muandet2020dual}, for the specific case $m(O; g) = Y g(Z)$, with $Y$ and $Z$ denoting distinct components of $O$. To the best of our knowledge, the extension of this result to infinite-dimensional function classes has not yet been reported in the literature. Proposition~\ref{prop:innermax} has the following key implication.

\begin{corollary}
 Let $\mc{G}\subset\mc{L}^2(P_{Z})$ and let $c \ne 0$ be a constant. If there exists $h_0^\dag$ such that  $h_0=I_0h_0^\dag$ solves equation~\eqref{eq:int_eq_h}, and for any $h\in \mc{H}$ it holds that $(2c^2)^{-1}\mc{T}_{P}(h_0^\dag-h)\in\mc{G}$, then 
\begin{align*}
     &\arg\min_{h \in \mc{H}}\left\{\Vert \rho-\mc{T}I_0h\Vert_2^2+\lambda_\mc{H}\Vert h\Vert^2\right\}\\
     &=\arg\min_{h \in \mc{H}}\big[\max_{g\in \mc{G}} \mathbb{E}_P\left\{m(O;g)-I_0(X)r(O)h(W)g(Z)-c^2I_0(X)g(Z)^2\right\}+\lambda_\mc{H}\Vert h\Vert^2\big].\nonumber %\label{eq:equiv_argmin}
\end{align*}
\label{corollary:closed_arg_max}
\end{corollary}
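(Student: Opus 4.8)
\emph{Proof plan.} The strategy is to verify that, for every $h\in\mc{H}$, the function of $h$ being minimized on the right-hand side coincides with the Tikhonov objective on the left-hand side (up to a fixed positive factor and a matching rescaling of $\lambda_{\mc{H}}$), so that the two problems have identical sets of minimizers over $\mc{H}$. The only substantive ingredients are Proposition~\ref{prop:innermax}(1), which supplies a closed form for the inner maximum, and the elementary fact that multiplication by $I_0$ commutes with $\mc{T}$.

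Concretely, I would proceed in three steps. First, because $\lambda_{\mc{H}}\Vert h\Vert^2$ is free of $g$, it passes outside the inner maximum, so the right-hand objective at a fixed $h$ equals $\max_{g\in\mc{G}}\E_P\{m(O;g)-I_0(X)r(O)h(W)g(Z)-c^2I_0(X)g(Z)^2\}+\lambda_{\mc{H}}\Vert h\Vert^2$, which in the notation introduced just before Proposition~\ref{prop:innermax} equals $(4c^2)^{-1}M_P(h)+\lambda_{\mc{H}}\Vert h\Vert^2$. Second, the hypothesis supplies exactly the condition $(2c^2)^{-1}\mc{T}(h_0^\dag-h)\in\mc{G}$ for each $h\in\mc{H}$, so Proposition~\ref{prop:innermax}(1) applies at every such $h$ and gives $M_P(h)=\Vert I_0\,\mc{T}(h_0^\dag-h)\Vert_2^2$. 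Third, I would record that $\mc{T}(I_0 f)=I_0\cdot(\mc{T}f)$ for every $f\in\mc{L}^2(P_W)$: since $X$ is a subvector of $Z$, the indicator $I_0(X)$ is $\sigma(Z)$-measurable, so $\mc{T}(I_0 f)(Z)=\E_P\{r(O)I_0(X)f(W)\mid Z\}=I_0(X)\,\mc{T}f(Z)$. Taking $f=h_0^\dag$ and using that $h_0=I_0h_0^\dag$ solves~\eqref{eq:int_eq_h} gives $I_0\,\mc{T}h_0^\dag=\mc{T}(I_0h_0^\dag)=\mc{T}h_0=\rho$; taking $f=h$ gives $I_0\,\mc{T}h=\mc{T}I_0h$; and by linearity of $\mc{T}$ and of multiplication by $I_0$, $I_0\,\mc{T}(h_0^\dag-h)=\rho-\mc{T}I_0h$. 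Hence $M_P(h)=\Vert\rho-\mc{T}I_0h\Vert_2^2$, so the right-hand objective is $(4c^2)^{-1}\Vert\rho-\mc{T}I_0h\Vert_2^2+\lambda_{\mc{H}}\Vert h\Vert^2$, which has the same minimizers over $\mc{H}$ as $\Vert\rho-\mc{T}I_0h\Vert_2^2+4c^2\lambda_{\mc{H}}\Vert h\Vert^2$, i.e.\ as the left-hand objective with $\lambda_{\mc{H}}$ rescaled by $4c^2$ (and exactly the left-hand objective under the normalization $c^2=1/4$).

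The computational content is already isolated in Proposition~\ref{prop:innermax}, so the part needing care is the bookkeeping around $I_0$: one must check that the closed form from Proposition~\ref{prop:innermax}(1) lands precisely on $\mc{T}I_0$---the operator appearing in the penalized loss---rather than on $\mc{T}$ alone, and that the structural-zero pattern $\rho=I_0\rho$ used implicitly is automatic here, since $\rho=\mc{T}h_0=I_0\,\mc{T}h_0^\dag$. The remaining point worth a sentence is the innocuous constant $4c^2$ linking the two discrepancy terms, which is absorbed into the regularization parameter.
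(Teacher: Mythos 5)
Your proof is correct and takes essentially the same route the paper intends: Corollary~\ref{corollary:closed_arg_max} is presented as an immediate consequence of part 1 of Proposition~\ref{prop:innermax}, combined with the observation that $I_0(X)$ is $\sigma(Z)$-measurable so that $I_0\,\mc{T}(h_0^\dag-h)=\rho-\mc{T}I_0h$, which is exactly your argument. Your explicit bookkeeping of the $4c^2$ factor (absorbed by rescaling $\lambda_{\mc{H}}$, or removed by normalizing $c^2=1/4$) is a welcome precision that the corollary's literal statement glosses over but that the paper itself acknowledges later in display~\eqref{argmin_h}.
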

It is now clear from Corollary~\ref{corollary:closed_arg_max} that when $\mc{G}$ is a linear space, under the \textit{closeness} condition 
\begin{equation}
\mc{T}(h_0^\dag-h)\in\mc{G} \quad \text{holds for every} \: h \in \mc{H},
\label{condition:closeness}
\end{equation}
the RAS estimators~\eqref{def:adversarial} with $\lambda_\mc{G}=0$ are estimating solutions of surrogate problems of the form~(\ref{eq:regularized_problem}) that employ the $\mc{L}^2(P_Z)$--norm for the loss term. 
If $\mc{H}$ is a normed space and it is known than $\Vert h\Vert_\mc{H}<B$ for some constant $B$, then the outer minimization can be restricted to functions in $\mc{H}$ with norm at most $B$. In this case, the closeness condition~\eqref{condition:closeness} only needs to hold over that restricted subset.

On the other hand, when the closeness condition~\eqref{condition:closeness} does not hold, part 2 of Proposition~\ref{prop:innermax} tells us that the RAS estimators can still be interpreted as regularized GMM estimators over a class of test functions that are dense in the $\mathcal{L}^2(P_Z)$–closure of the class $I_0 \: \mathcal{G}$. The requirement that the projection of $\mathcal{T}_{P} I_0(h_0^\dag - h)$ onto $\overline{I_0\mc{G}}$ lies in $I_0 \mathcal{G}$ is equivalent to requiring that the maximum over $\mathcal{G}$ of the expected stabilized residuals exists. Existing finite-sample error bounds for RAS estimators have been established only under the stronger closeness condition~\eqref{condition:closeness}, but the preceding discussion on test-class misspecification suggests a potential relaxation: if one is willing to assume that $\rho \in \mathcal{G}$, then it may be possible to derive analogous bounds under the weaker condition that the projection of $\mathcal{T} I_0(h_0^\dag - h)$ onto $\overline{I_0 \mathcal{G}}$ belongs to $I_0 \mathcal{G}$.

KRAS estimators $\tilde h$ of the form~\eqref{def:adversarial} with $\lambda_\mathcal{H} > 0$, $\lambda_\mathcal{G} > 0$, $\mathcal{H}$ and $\mathcal{G}$ taken as RKHSs, and penalization based on the corresponding RKHS norms were first studied by \citet{dikkala2020minimax} in the special case $m(o;g) = y \cdot g(z)$, and by \citet{ghassami2022minimax} in the more general setting $m(o;g) = s(o) g(z)$, where $s(\cdot)$ is a known function satisfying $\|s\|_\infty < \infty$. In both works, the set $\mathcal{X}_0$ was taken to be the entire domain $\mathcal{X}$, equivalently assuming $I_0 \equiv 1$. Choosing $\mathcal{H}$ and $\mathcal{G}$ as RKHSs and penalizing via their respective RKHS norms is attractive because it yields computationally stable estimators that admit a closed-form expression. In contrast, omitting the $\lambda_\mathcal{G} \|g\|_\mathcal{G}^2$ penalty or replacing RKHS norms with empirical $L^2(\mathbb{P}_n)$ norms for the penalization of $h$ may still result in closed-form estimators, but their computation typically involves inversion of nearly ill-posed $n \times n$ matrices. Including the $\lambda_\mathcal{G} \|g\|_\mathcal{G}^2$ penalization and using RKHS norms for penalizing $h$ leads instead to inversion of well-posed matrices, thus stabilizing the computation. See \citet{dikkala2020minimax} and \citet{ghassami2022minimax} for the closed form expression of KRAS estimators.

\cite{dikkala2020minimax} and \cite{ghassami2022minimax} provided a finite-sample analysis of KRAS estimators. Their analyses begin by showing that, under suitable regularity conditions and with appropriately chosen tuning parameters $\lambda_{\mathcal{G}}$ and $\lambda_{\mathcal{H}}$, the weak error $\|\mathcal{T}(\hat{h} - h_0)\|_2$ is of order $O(\delta_n)$, where $\delta_n$ is an upper bound on the critical radius of the classes $\mathcal{H}$ and $\mathcal{G}$, and $h_0$ is any solution to equation~\eqref{eq:int_eq_h}. They then assume that the solution $h_0$ to equation~\eqref{eq:int_eq_h} is unique and derive a bound on the RMSE of the form
\begin{equation}
    \|\hat{h} - h_0\|_2 \le \tau(\delta_n),\label{eq:mip}
\end{equation}
where, for any $\delta > 0$,
$ \tau(\delta) := \sup \left\{ \|h\|_2 : h \in \mathcal{H},\ \|h\|_{\mathcal{H}} \le 2B,\ \|\mathcal{T} h\|_2 \le \delta \right\}$ 
with $B$ being a universal constant such that $\|\hat{h}\|_{\mathcal{H}} \le B$ and $\|h_0\|_{\mathcal{H}} \le B$. Note that $\tau(\delta)$ is a measure of the ill-posedness of equation~\eqref{eq:int_eq_h} that depends both on the data generating process and on $\mc{H}$. 

Unfortunately, if there exists a nonzero function $h^{(0)} \in \mathcal{N}(\mathcal{T}) \cap \mathcal{H}$, where $\mathcal{N}(\mathcal{T})$ denotes the null set of the conditional mean operator $\mathcal{T}$, then the bound~\eqref{eq:mip} becomes vacuous; that is, $\tau(\delta_n) > C > 0$ for some universal constant $C$. Indeed, 
$h^{(0)^*} := B \: {h^{(0)}}/{\|h^{(0)}\|_{\mathcal{H}}}$
satisfies $\mathcal{T} h^{(0)^*} = 0$ and $\|h^{(0)^*}\|_{\mathcal{H}} = B < 2B$, implying that $\tau(\delta) \ge \|h^{(0)^*}\|_2 =: C$ for all $\delta > 0$. Similar comments were made by \cite{kallus2021causal} and \cite{bennett2023source}. 

Even under the condition $\mathcal{N}(\mathcal{T}) \cap \mathcal{H} = {0}$, there is no known general methodology for expressing or bounding $\tau(\delta)$ as a function of $\delta$. \citet{dikkala2020minimax} obtained such a bound in a particular setting, relying on a strong structural assumption on the data-generating process that, in effect, requires $Z$ to be a strong predictor of $W$. While this result illustrates that bounding $\tau(\delta)$ is possible in certain special cases, to our knowledge no general approach is currently available.

In contrast, more satisfactory finite-sample analyses exist for the computationally unstable RAS estimators that employ $\mc{H}$ and $\mc{G}$ as RKHSs with $\mathcal{L}^2(\mathbb{P}_n)$ penalties \citep{liao2020provably,bennett2023source}. These analyses adopt a different notion of ill-posedness that does not require uniqueness of the solution to the integral equation, focusing instead on the minimum $\mathcal{L}^2(P_W)$--norm solution satisfying the so-called \textit{$\beta$--source condition}.

To review the definition of the $\beta$--source condition and its implication for controlling the regularization bias in complete generality, consider an arbitrary bounded linear operator 
$\mc{A} : \mc{E}_1 \rightarrow \mc{E}_2$,
where $\mc{E}_1$ and $\mc{E}_2$ are Hilbert spaces, and suppose we aim to solve the equation $\mc{A} \:h= \rho$
for some $\rho \in \mc{E}_2$. Assume a solution to the equation exists, and let $h_0$ denote the solution with minimal $\mc{E}_1$--norm. Let $\mc{A}^*:\mc{E}_2 \rightarrow \mc{E}_1$ denote the adjoint of the operator $\mc{A}$. The $\beta$--source condition posits that $h_0$ lies in the range of the fractional power operator $(\mc{A}^* \mc{A})^{\beta /2}$; that is, 
\begin{equation}
h_0= (\mc{A}^* \mc{A})^{\beta /2} h^{*} \quad \text{for some} \: h^{*} \in \mc{E}_1.
\label{def:betasource-generic}
\end{equation}
When $\mc{A}$ is a compact operator, this condition amounts to assuming that $h_{0}$ concentrates its energy in the well-identified directions of the inverse problem—i.e., the directions along which the operator $\mathcal{A}^* \mathcal{A}$ is most stable (see Chapter 3 of \cite{carrasco2007linear} for the interpretation of the source condition when $\mc{A}$ is compact and Chapter 4 of \cite{hohage2002lecture} for the definition of the power of a bounded operator).
By excluding the possibility that $h_0$ has substantial components in directions with near-zero singular values of $\mathcal{A}$, the assumption facilitates control over $\Vert h_\lambda - h_0 \Vert_{\mc{E}_1}$ where 
\begin{equation}
    h_\lambda := \arg \min_{h \in \mc{E}_1} \Vert \mc{A} h - \rho \Vert_{\mc{E}_2}^2 + \lambda \Vert h \Vert_{\mc{E}_1}^2
\label{def:hlambda-generic}
\end{equation} 
is the solution to the surrogate Tikhonov-regularized problem. We emphasize that in~\eqref{def:hlambda-generic}, $\mc{E}_1$ and $\mc{E}_2$ are Hilbert spaces. This is a crucial assumption, as it guarantees that the argmin exists. By contrast, consider the modified problem
\begin{equation*}
\arg \min_{h \in \mc{H}_1} \big\{ \Vert \mc{A} h - \rho \Vert_{\mc{E}_2}^2 + \lambda \Vert h \Vert_{\mc{E}_1}^2 \big\}
\end{equation*}
where $\mc{H}_1 \subsetneq \mc{E}_1$. In this case, a minimizer need not exist, even if $\mc{H}_1$ is dense in $\mc{E}_1$--for example, when $\mc{H}_1$ is a universal RKHS embedded in $\mc{E}_1 = \mc{L}^2(P_W)$. Of course, existence can be recovered if one assumes in addition that the minimizer $h_\lambda$ from \eqref{def:hlambda-generic} lies in $\mc{H}_1$, although one cannot generally expect this inclusion to hold without additional structure. Importantly, assuming that $h_0$ is in $\mc{H}_1$ does not suffice to ensure that $h_\lambda$ is in $\mc{H}_1$. For instance, suppose that $\mc{H}$ is an RKHS included in $\mc{E}_1 = \mc{L}^2(P_W)$ and $\mc{H}_1=\left\{h\in\mc{H}: \Vert h\Vert_2 < B \right\}$. Suppose  that $h_0$ is in $\mc{H}_1$. In such case, one can show that $\Vert h_\lambda \Vert_2 <B$. However, $h_\lambda$ need not be in $\mc{H}_1$ because it need not fall inside $\mc{H}$. These issues will be central when we examine the assumptions used in the analysis of LRAS estimators in Section~\ref{sec:rel_work}.

The aforementioned error bounds are given by the following classical result from the theory of inverse problems (see Theorem 5.2 of \cite{hohage2002lecture}). 
\begin{theorem}
    Let $\mc{A} : \mc{E}_1 \rightarrow \mc{E}_2$ be a bounded linear operator between the Hilbert spaces $\mc{E}_1$ and $\mc{E}_2$ and let $\rho \in \mc{E}_2 $ be in the range of $\mc{A}$. Suppose the solution $h_0$ to the equation $\mc{A}h=\rho$ with minimal $\mc{E}_1$--norm satisfies the $\beta$--source condition (\ref{def:betasource-generic}). Let $h_\lambda$ be defined as in~(\ref{def:hlambda-generic}). Then, there exist constants $c_1$ and $c_2$ such that $\Vert h_0 - h_{\lambda} \Vert_{\mc{E}_1}^2 < c_1 \:\lambda^{\min\left\{\beta ,2\right\}}$ and  $ \Vert \mc{A} ( h_0 - h_{\lambda}) \Vert_{\mc{E}_2}^2 < c_2 \:\lambda^{\min\left\{\beta +1 ,2\right\}}$.
    \label{thm:source-general}
\end{theorem}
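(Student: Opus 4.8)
The plan is to combine the closed form of the Tikhonov minimizer with the functional calculus for the bounded self-adjoint operator $\mc{B} := \mc{A}^*\mc{A}$, whose spectrum is contained in $[0,\Vert\mc{A}\Vert^2]$. Since this is a restatement of Theorem~5.2 of \cite{hohage2002lecture}, I would keep the write-up terse.

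\emph{Step 1 (closed form and regularization-bias identity).} First I would note that, because $\mc{E}_1,\mc{E}_2$ are Hilbert spaces, the functional $h\mapsto\Vert\mc{A}h-\rho\Vert_{\mc{E}_2}^2+\lambda\Vert h\Vert_{\mc{E}_1}^2$ is strictly convex, continuous and coercive, so $h_\lambda$ exists, is unique, and solves the normal equation $(\mc{B}+\lambda\,\mathrm{Id})h_\lambda=\mc{A}^*\rho$. Since $h_0$ is a solution of $\mc{A}h=\rho$ (minimality is not needed here), $\mc{A}^*\rho=\mc{B}h_0$, hence $h_\lambda=(\mc{B}+\lambda\,\mathrm{Id})^{-1}\mc{B}h_0$ and
\[
h_0-h_\lambda=\bigl(\mathrm{Id}-(\mc{B}+\lambda\,\mathrm{Id})^{-1}\mc{B}\bigr)h_0=\lambda\,(\mc{B}+\lambda\,\mathrm{Id})^{-1}h_0 .
\]
Substituting the $\beta$-source condition $h_0=\mc{B}^{\beta/2}h^*$ gives $h_0-h_\lambda=\lambda\,(\mc{B}+\lambda\,\mathrm{Id})^{-1}\mc{B}^{\beta/2}h^*$. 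Moreover $\Vert\mc{A}v\Vert_{\mc{E}_2}^2=\langle\mc{B}v,v\rangle_{\mc{E}_1}=\Vert\mc{B}^{1/2}v\Vert_{\mc{E}_1}^2$, and since all operators involved are functions of $\mc{B}$ and hence commute,
\[
\Vert\mc{A}(h_0-h_\lambda)\Vert_{\mc{E}_2}=\bigl\Vert\lambda\,(\mc{B}+\lambda\,\mathrm{Id})^{-1}\mc{B}^{(\beta+1)/2}h^*\bigr\Vert_{\mc{E}_1}.
\]

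\emph{Step 2 (reduction to a scalar supremum).} Next I would apply the spectral theorem, writing $\mc{B}=\int_{[0,\Vert\mc{A}\Vert^2]}\mu\,dE_\mu$ and representing the fractional power and the resolvent through this spectral measure, so that
\[
\Vert h_0-h_\lambda\Vert_{\mc{E}_1}^2=\int\Bigl(\tfrac{\lambda\,\mu^{\beta/2}}{\mu+\lambda}\Bigr)^2 d\Vert E_\mu h^*\Vert^2,\qquad \Vert\mc{A}(h_0-h_\lambda)\Vert_{\mc{E}_2}^2=\int\Bigl(\tfrac{\lambda\,\mu^{(\beta+1)/2}}{\mu+\lambda}\Bigr)^2 d\Vert E_\mu h^*\Vert^2 .
\]
Each left-hand side is then at most $\Vert h^*\Vert_{\mc{E}_1}^2$ times the supremum over $\mu\in[0,\Vert\mc{A}\Vert^2]$ of the corresponding integrand, so it suffices to bound, for a generic exponent $\nu>0$, the scalar quantity $\lambda^2\mu^\nu/(\mu+\lambda)^2$ uniformly on $[0,\Vert\mc{A}\Vert^2]$.

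\emph{Step 3 (the scalar estimate and conclusion).} Here I would use the elementary inequality $\mu^{a}/(\mu+\lambda)\le\lambda^{a-1}$ for $0\le a\le 1$ (the left side is maximized at $\mu=a\lambda/(1-a)$, with value $a^a(1-a)^{1-a}\lambda^{a-1}\le\lambda^{a-1}$). If $\nu\le 2$, taking $a=\nu/2$ gives $\lambda^2\mu^\nu/(\mu+\lambda)^2=\bigl(\lambda\mu^{\nu/2}/(\mu+\lambda)\bigr)^2\le\lambda^\nu$; if $\nu>2$, writing $\mu^\nu=\mu^2\mu^{\nu-2}$ and using $\mu^2/(\mu+\lambda)^2\le1$ together with $\mu^{\nu-2}\le\Vert\mc{A}\Vert^{2(\nu-2)}$ gives $\lambda^2\mu^\nu/(\mu+\lambda)^2\le\Vert\mc{A}\Vert^{2(\nu-2)}\lambda^2$. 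In both cases the bound is $\le C_\nu\,\lambda^{\min\{\nu,2\}}$. Taking $\nu=\beta$ yields $\Vert h_0-h_\lambda\Vert_{\mc{E}_1}^2\le c_1\lambda^{\min\{\beta,2\}}$ and $\nu=\beta+1$ yields $\Vert\mc{A}(h_0-h_\lambda)\Vert_{\mc{E}_2}^2\le c_2\lambda^{\min\{\beta+1,2\}}$, with $c_1,c_2$ depending only on $\Vert h^*\Vert_{\mc{E}_1}$, $\Vert\mc{A}\Vert$ and $\beta$.

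The only genuine content is the scalar estimate in Step~3, and in particular the case split at exponent $2$: this is exactly what produces the saturation $\min\{\beta,2\}$ (resp.\ $\min\{\beta+1,2\}$)—the well-known qualification ceiling of Tikhonov regularization—so no sharpening beyond these exponents is available by this route. Everything else (existence and the normal equation for $h_\lambda$, the resolvent identity for $h_0-h_\lambda$, and the translation of operator norms into spectral integrals) is routine, and for the remaining details I would simply point to Theorem~5.2 of \cite{hohage2002lecture}.
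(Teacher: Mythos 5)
Your proof is correct. The paper does not prove this statement at all---it presents it as a classical result and simply cites Theorem~5.2 of \cite{hohage2002lecture}---and your argument is the standard spectral-calculus proof of that result: the normal equation gives $h_0-h_\lambda=\lambda(\mc{B}+\lambda\,\mathrm{Id})^{-1}h_0$, the source condition converts both error norms into spectral integrals of $\lambda^2\mu^{\nu}/(\mu+\lambda)^2$ with $\nu=\beta$ and $\nu=\beta+1$, and the elementary scalar bound with the case split at $\nu=2$ yields the saturated exponents. The only cosmetic gap is that the theorem states strict inequalities, which you recover by enlarging $c_1,c_2$; everything else, including the observation that the source condition forces $h_0\in\mc{N}(\mc{A})^{\perp}$ so that minimality is automatic, is sound.
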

In the next section, we provide an alternative finite-sample analysis for estimators of the form $\hat{h} = I_0 \tilde{h}$, where $\tilde{h}$ is a KRAS estimator. We derive informative finite-sample bounds on both the weak error and the root mean squared error, under two variants of the $\beta$--source condition imposed on the solution with minimal $\mathcal{H}$--norm. Crucially, we do not assume $\mathcal{N}(\mathcal{T}) \cap \mathcal{H} = {0}$, nor do we rely %on the ill-posedness measure $\tau(\delta)$. 

In Section~\ref{sec:rel_work} we argue that while adversarial stabilized estimators with $\mathcal{L}^2(\mathbb{P}_n)$ penalties can achieve faster convergence rates under comparable smoothness assumptions, these rates rely on a strong condition that KRAS does not require—and may fail entirely when that condition is violated. We conjecture that, under these assumptions, the slower rates of KRAS are not a limitation of our analysis—i.e., they do not stem from overly loose bounds—but rather reflect an intrinsic feature of the KRAS methodology itself: by penalizing with the RKHS norm instead of the $\mathcal{L}^2(\mathbb{P}_n)$ norm, the KRAS estimator implicitly tackles a more severely ill-posed inverse problem. Thus, KRAS trades statistical speed for computational stability and less stringent assumptions. 

\section{Finite-sample Bounds on the Errors of the KRAS Estimators}\label{sec:conv_analysis}

In this section we will focus on the KRAS estimator $\hat {h}_{KRAS}(W) :=I_0(X)\tilde {h}_{KRAS}(W)$, where
\begin{align}
\tilde {h}_{KRAS}:=\arg\min_{h\in\mc{H}_B}\max_{g\in\mc{G}}&\big[\E_n\{m(O;g)-I_0(X)r(O)h(W)g(Z)-c^2\: I_0(X)g(Z)^2\} \label{def:h_tilde}\\
&-\lambda_{\mc{G}}\Vert g\Vert^2_\mc{G}+\lambda_{\mc{H}}\Vert h\Vert^2_\mc{H}\big]\nonumber,
\end{align}
$\mc{H}$ and $\mc{G}$ are RKHSs with kernels $K_{\mc{H}}$ and $K_{\mc{G}}$, and $\mc{H}_B:=\{h\in\mc{H}\mid \Vert h\Vert_\mc{H}\le B\}$ with $B$ a constant defined later after Assumption~\ref{ass:K_H}. 

We shall introduce two distinct $\beta$--source conditions and derive finite-sample bounds for both the RMSE and the weak error under each condition and additional regularity conditions. The source conditions differ in the operators that define them. The first arises directly from an application of Theorem~\ref{thm:source-general}. The second is obtained after a transformation that clarifies the implicit inverse problem actually being targeted, providing insight into its structure and enabling a qualitative comparison with other kernel-based adversarial estimators, as discussed in Section~\ref{sec:rel_work}.

In our derivation of the finite-sample bounds, we make a number of assumptions, which we now introduce and discuss. Our first assumption is a realizability condition:

\begin{assumption}
There exists a solution to equation~\eqref{eq:int_eq_h} of the form $I_0 h$ where $h \in \mathcal{H}$; equivalently, the set
$\mathcal{H}_0 := \{h \in \mathcal{H} : I_0 h \ \text{solves equation~\eqref{eq:int_eq_h}} \}$ 
is nonempty.
\label{ass:relizability}
\end{assumption}

When $I_0 \neq 1$, Assumption~\ref{ass:relizability} does not require a solution to equation~\eqref{eq:int_eq_h} to lie in $\mathcal{H}$ itself. This is important in cases where $\rho$ has a structural zero on $\mathcal{X} \setminus \mathcal{X}_0$: then no solution lies in $\mathcal{H}$ unless the RKHS contains functions that vanish exactly on $\mathcal{X} \setminus \mathcal{X}_0$. Since most RKHSs used in practice do not have this property, existing KRAS estimators from the literature---which implicitly assume $\mathcal{H}$ contains the exact solution---are generally unsuitable in such cases. This motivates our modified KRAS estimators, which explicitly accommodate instances with structural zeros.

When $\mathcal{X}_0 = \mathcal{X}$, so that $I_0 \equiv 1$, the realizability condition above reduces to the standard assumption used in \citet{dikkala2020minimax} and \citet{ghassami2022minimax}. Note that the condition $\mc{H}_0\cap\mc{N}(\mc{T})=\{0\}$ required for the bound on the RMSE computed by \cite{dikkala2020minimax} and \cite{ghassami2022minimax} to be non-vacuous rules out that $\mc{H}_0$ has more than one element. In contrast, we do not assume that $\mathcal{H}_0$ is a singleton. Instead, we allow equation (\ref{eq:int_eq_h}) to admit multiple solutions and derive error bounds on $\Vert \tilde{h}_{KRAS} - h_{0,KRAS}^\dagger \Vert_2$, where

\begin{equation}
h_{0,KRAS}^\dagger := \arg \min_{h \in \mathcal{H}_0} \Vert h \Vert_{\mathcal{H}}^2.
\label{def:h_0_dag}
\end{equation}

To ensure that $h_{0,KRAS}^\dagger$ in (\ref{def:h_0_dag}) is well-defined---meaning the argmin exists and is unique---we will make the following assumption.
\begin{assumption} 
$\mc{W}$ is compact and the kernel function $K_{\mc{H}}:\mc{W}\times\mc{{W}}\rightarrow\mathbb{R}$ is continuous.\label{ass:K_H}
\end{assumption}
In Lemma~\ref{lemma:i0t_bounded} of Appendix~\ref{sec:proof_lemmas} we show that under this assumption, the operator $I_0^{op}\circ\mc{T}\vert_\mc{H}: \mc{H} \rightarrow \mc{L}^2_2{(P_Z)} $ is bounded and linear, where $I_0^{op}:\mc{L}^2(P_{Z})\rightarrow\mc{L}^2(P_{Z})$ is the operator $\left(I_0^{op}g\right)(z):=I_0(x)\cdot g(z)$ where $ z=(x,z')$, $\mc{T}\vert_\mc{H}:\mc{H} \rightarrow \mc{L}^2(P_{Z})$ is the restriction of the conditional mean operator to $\mc{H}$, and the boundedness of $I_0^{op}\circ\mc{T}\vert_\mc{H}$ is with respect to the topology defined by the norm of $\mc{H}$. The boundedness and linearity of $I_0^{op}\circ\mc{T}\vert_\mc{H}$ implies that its null space is closed and linear, so the projection of any element of $\mc{H}_0$ onto it exists. The residual from such projection is the unique element of $\mc{H}_0$ with minimum $\mc{H}$--norm.

The constant $B$ in the class $\mc{H}_B$ in the definition of $\tilde{h}_{KRAS}$ is any constant satisfying $\Vert h_{0,KRAS}^\dag \Vert_\mc{H} < B$.

It is important to note that under Assumption~\ref{ass:K_H}, the integral operator $T_{\mc{H}}:\mathcal{L}^2(P_{W})\rightarrow\mathcal{L}^2(P_{W})$ defined for any $h \in \mathcal{L}^2(P_{W})$, as 
\begin{align}
    T_{\mc{H}}h(w)&:=\int K_{\mc{H}}\left(w,\tilde w\right)h(\tilde w)dP_{W}(\tilde w) \label{def:int_op}
\end{align}
is a self-adjoint, Hilbert-Schmidt operator (see Theorem 2.34 of \cite{carrasco2007linear}), and therefore compact, with respect to the topology defined by the $\mc{L}^2(P_{W})$--norm in the domain and the image. On the other hand, it is well known that the range of $T_{\mc{H}}$ is included in $\mc{H}$. However, the operator $T_{\mc{H}}:\mathcal{L}^2(P_{W})\rightarrow \mc{H}$ is not compact with respect to the topology defined by the $\mc{H}$--norm in the image.

%This operator is bounded, since for any $h\in\mc{L}^2(P_{0,W})$,
%    \[
%    \Vert I^{op}_{\mc{X}}h\Vert_2^2=\E_P\left\{ I_{\mc{X}}(X)^2\cdot h(W)^2\right\}\le \Vert h\Vert_2^2.
%    \]

We introduce our two source conditions by first noting that the surrogate regularized problem that the KRAS estimator $\tilde h_{KRAS}$ targets,  where for simplicity we take $c=1$, is 
    \begin{align}
h_{\lambda_{\mc{H}},KRAS}^\dag
: =&\arg\min_{h\in\mc{H}_B}\left\{\frac{1}{4}\left\Vert\rho- I_0^{op} \circ \mc{T}\vert_\mc{H} \:h\right\Vert_2^2+\lambda_{\mc{H}}\left\Vert h\right\Vert_{\mc{H}}^2\right\}\nonumber\\
 =&\arg\min_{h\in\mc{H}}\left\{\frac{1}{4}\left\Vert\rho- I_0^{op} \circ \mc{T}\vert_\mc{H} \:h\right\Vert_2^2+\lambda_{\mc{H}}\left\Vert h\right\Vert_{\mc{H}}^2\right\}.
\label{argmin_h}
    \end{align}    
We remark that because $\mc{H}_B$ is a strict subset of the Hilbert space $\mc{H}$, in principle, the argmin over $\mc{H}_B$ need not exist. However, in Lemma~\ref{lemma:h_lambda} of Appendix~\ref{sec:proof_lemmas} we show that the argmin over $\mc{H}$ falls in $\mc{H}_B$ and consequently the argmin over $\mc{H}_B$ exists because it agrees with the argmin over $\mc{H}$, thus justifying the second equality in~\eqref{argmin_h}. We emphasize that the argmin over $\mc{H}$ exists because $\mc{H}$ is a Hilbert space (see the discussion preceding Theorem~\ref{thm:source-general}).  

Now, in Lemma~\ref{lemma:i0t_bounded} of Appendix~\ref{sec:proof_lemmas} we show that the adjoint $(I_0^{op}\circ \mc{T}\vert_\mc{H})^*:\mc{L}^2(P_Z) \rightarrow \mc{H}$ of $I_0^{op}\circ \mc{T}\vert_\mc{H}$ is given by $T_{\mc{H}} \circ\mc{\mc{T}}^* \circ I_0^{op}$ where, recall, $\mc{T}^*:\mc{L}^2(P_Z) \rightarrow \mc{L}^2(P_W)$ is the adjoint of the conditional mean operator $\mc{T}:\mc{L}^2(P_W) \rightarrow \mc{L}^2(P_Z)$.

Identifying in Theorem \ref{thm:source-general} the operator $\mc{A}$ with $I_0^{op} \circ \mc{T} \vert_\mc{H}$, the target function $h_0$ with $h_{0,KRAS}^\dag$, and the regularized solution $h_\lambda$ with $h_{\lambda_{\mc{H}},KRAS}^\dag$, and noting that for any $h \in \mc{H}$, $\mc{T}^*\circ I_0^{op} \circ I_0^{op} \circ \mc{T}\vert_\mc{H} \:h=  \mc{T}^*\circ I_0^{op} \circ \mc{T} \: h $ naturally leads to the following source condition
\begin{assumption}
    There exists $\beta >0$ such that $  h_{0,KRAS}^\dag = (T_{\mc{H}}\circ\mc{T}^* \circ I_0^{op}\circ  \mc{T})^{\beta/2} \: h^{*}$ for some $h^{*} \in \mc{H}$. Equivalently, 
    \begin{equation}
    h_{0,KRAS}^\dag = (T_{\mc{H}}\circ\mc{T}^* \circ I_0^{op}\circ \mc{T})^{\beta/2} \: T_\mc{H}^{1/2} \: h^{**} \quad \text{for some } \: h^{**} \in \mc{L}^2(P_W).
    \label{eq:source-kras-basic}
    \end{equation}
    \label{ass:source-basic}
\end{assumption}
  To motivate our second source condition, consider the operator $T_\mc{H}^{1/2} :\mc{L}^2(P_W) \rightarrow \mc{H}$. This is a surjective operator that is bounded when $\mc{H}$ is endowed with the topology induced by the RKHS inner product. Furthermore, when restricted to the orthogonal complement to the null space of $T_\mc{H}$, the operator $T_\mc{H}^{1/2}$ is an isometry. That is, $\Vert T_\mc{H}^{1/2} h \Vert_{\mc{H}} = \Vert h \Vert_2$ for all $h\in \mc{N}(T_\mc{H})^\perp$ (see Lemma~\ref{lemma:null_space} (d) in Appendix~\ref{sec:proof_lemmas}). In addition, the linear operator $\widetilde{\mc{T}}:=I_0^{op} \circ \mc{T}\circ T_\mc{H}^{1/2}:\mc{L}^2(P_W)\rightarrow\mc{L}^2(P_Z)$ is bounded (see Lemma~\ref{lemma:i0t_bounded} in Appendix~\ref{sec:proof_lemmas}). Then, letting $h_{\lambda_{\mc{H}},KRAS}':=\arg\min_{h\in\mathcal{L}^2(P_{W})}\{\frac{1}{4}\Vert\rho-\widetilde{\mc{T}}h\Vert_2^2+\lambda_{\mc{H}}\Vert h\Vert_2^2\}$, we have that
 \begin{align*}
 h_{\lambda_{\mc{H}},KRAS}^\dag
&:=\arg\min_{h\in\mc{H}}\left\{\frac{1}{4}\left\Vert\rho- I_0^{op} \circ \mc{T}|_\mc{H}h\right\Vert_2^2+\lambda_{\mc{H}}\left\Vert h\right\Vert_{\mc{H}}^2\right\}\\
&=T_\mc{H}^{1/2}\left[\arg\min_{h'\in\mc{N}(T_\mc{H})^\perp}\left\{\frac{1}{4}\left\Vert\rho- I_0^{op} \circ \mc{T}\circ T_\mc{H}^{1/2}h'\right\Vert_2^2+\lambda_{\mc{H}}\left\Vert T_\mc{H}^{1/2}h'\right\Vert_{\mc{H}}^2\right\}\right]\\
     &=T_\mc{H}^{1/2}\left[\arg\min_{h'\in\mc{N}(T_\mc{H})^\perp}\left\{\frac{1}{4}\left\Vert\rho- \widetilde{\mc{T}}h'\right\Vert_2^2+\lambda_{\mc{H}}\left\Vert h'\right\Vert_2^2\right\}\right]\\
      &=T_\mc{H}^{1/2}\left[\arg\min_{h'\in\mc{L}^2(P_W)}\left\{\frac{1}{4}\left\Vert\rho- \widetilde{\mc{T}}h'\right\Vert_2^2+\lambda_{\mc{H}}\left\Vert h'\right\Vert_2^2\right\}\right]:=T_\mc{H}^{1/2}h_{\lambda_{\mc{H}},KRAS}',
\end{align*}
where the second to last equality follows because the minimum over $h' \in \mc{L}^2(P_W)$ of the expression in curly brackets is attained at an element of $\mc{N}(T_\mc{H})^\perp$. Note that the first line in the display indicates that the KRAS regularized problem combines a $\mathcal{L}^2$--loss with an RKHS-norm penalization. The second to last equality establishes that, by applying a suitable transformation to the underlying operator $\mc{T}$, we can rewrite the KRAS problem so that the penalization is also in $\mathcal{L}^2$. This reframing makes the KRAS estimators and the LRAS estimators directly comparable as solutions to regularization problems with the same norm structure but different operators. Specifically, $\tilde{h}_{KRAS}$ is the image---under the isometry $T_\mathcal{H}^{1/2}$---of the Tikhonov-regularized solution $h_{\lambda_{\mc{H}},KRAS}'$ (with an $\mathcal{L}^2$ penalty) to the operator equation
\begin{equation} \widetilde{\mathcal{T}} h = \rho. \label{eq:rkhs_int_eq_h}
\end{equation}
This observation has two important implications. First, since $T_\mathcal{H}^{1/2}$ is a compact operator (see Lemma~\ref{lemma:null_space} (f) in Appendix~\ref{sec:proof_lemmas}), regularization in the RKHS norm---as used in the computation of the KRAS estimators---amounts to solving a more severely ill-posed problem than the original integral equation~\eqref{eq:int_eq_h}, because the modified operator $\widetilde{\mathcal{T}}$ smooths more aggressively than $\mathcal{T}$. Second, by assuming a $\beta$--source condition on the minimal $\mathcal{L}^2(P_{W})$--norm solution $h_{0,KRAS}'$ to equation~\eqref{eq:rkhs_int_eq_h}, we can derive bounds on the regularization bias $\| h_{\lambda_{\mc{H}},KRAS}' - h_{0,KRAS}' \|_2$.  Lemma~\ref{lemma:min_rkhs_norm_sol_h} in Appendix~\ref{sec:proof_lemmas} establishes that $h_{0,KRAS}^\dag=T_\mathcal{H}^{1/2}h_{0,KRAS}'$. This then implies that with such source condition we can also control the regularization bias $\| h_{\lambda_{\mc{H}},KRAS}^\dag - h_{0,KRAS}^\dag \|_2$ of the regularized solution to the original problem $\mc{T}I_0h=\rho(Z)$. 

The first point is key to understanding the distinction between RMSE bounds for LRAS, i.e. RAS estimators over RKHS classes $\mc{H}$ and $\mc{G}$ that use $\mathcal{L}^2(\mathbb{P}_n)$ penalization, versus KRAS estimators, i.e. those that employ RKHS penalization—a topic we return to in Section~\ref{sec:compare_ras}. The second point, in turn, motivates our announced second source condition. To state it, we first note that the adjoint $\widetilde{\mc{T}}^*:\mc{L}^2(P_Z)\rightarrow\mc{L}^2(P_W)$ of $\widetilde{\mc{T}}$ satisfies $\widetilde{\mc{T}}^* = T_\mc{H}^{1/2}  \circ \mc{T}^*\circ I_0^{op}$. 
    \begin{assumption}
    There exists $\beta>0$ and $h^{**}\in\mathcal{L}^2(P_{W})$ such that 
    \begin{align}
h_{0,KRAS}^\dag =T^{1/2}_{\mc{H}}\circ\left(\widetilde{\mc{T}}^*\circ\widetilde{\mc{T}}\right)^{\beta/2} h^{**}=T^{1/2}_{\mc{H}}\circ \left( T_\mc{H}^{1/2} \circ \mc{T}^*\circ  I_0^{op} \circ  \mc{T} \circ  T_\mc{H}^{1/2}\right)^{\beta/2} \: h^{**}.
\label{eq:source_kras}
    \end{align}
     \label{ass:source}
\end{assumption} 
Assumption~\ref{ass:source} is tantamount to requiring the standard $\beta$--source condition on $h_{0,KRAS}'$ with respect to the operator $\widetilde{\mc{T}}$. When $\widetilde{\mc{T}}$ is compact---a property that is satisfied when $W$ and $Z$ do not share common components---this requires that $h_{0,KRAS}'$ decays rapidly in the directions corresponding to small singular values of $\widetilde{\mc{T}}$, with larger values of $\beta$ implying greater smoothness and better alignment with the stable directions of $\widetilde{\mc{T}}$. The final application of  $T^{1/2}_{\mc{H}}$ in Assumption \ref{ass:source} further smooths $h_{0,KRAS}'$ with respect to the singular value decomposition of the integral operator $T_{\mc{H}}$. Thus, Assumption \ref{ass:source} encodes smoothness of $h_{0,KRAS}^\dag$ relative to both the operator $\widetilde{\mc{T}}$ and the geometry of the RKHS $\mc{H}$. 

As we will see in the next theorem, Assumptions \ref{ass:source-basic} and \ref{ass:source} lead to identical bounds on the regularization bias. The assumptions coincide when $\beta = 2$ but impose different smoothness requirements otherwise. We find Assumption \ref{ass:source} particularly useful for comparisons with RAS estimators that penalize using the $\mc{L}^2(\mathbb{P}_n)$--norm, as it posits a $\beta$--source condition related to the operator $\widetilde{\mc{T}}$ acting between $\mathcal{L}^2$ spaces, rather than for an operator whose domain is the RKHS $\mc{H}$.

\begin{theorem}[Regularization Bias]
     Suppose that Assumptions \ref{ass:relizability} and \ref{ass:K_H} hold. If, in addition, Assumption \ref{ass:source-basic} or Assumption \ref{ass:source} hold, then there exist constants $c_1^*$ and $c_2^*$ such that 
    \begin{align*}
        \Vert  h_{\lambda_{\mc{H}},KRAS}^\dag - h_{0,KRAS}^\dag \Vert_{\mc{H}}^2 \le c_1^* \: \lambda_{\mc{H}}^{\min\left\{\beta,2\right\}} \:
        \text{and}\:
        \Vert \mc{T}I_0(h_{\lambda_{\mc{H}},KRAS}^\dag - h_{0,KRAS}^\dag )\Vert_2^2\le c_2^* \:\lambda_{\mc{H}}^{\min\left\{\beta+1,2\right\}}.
    \end{align*}
    \label{theo:biascontrol}
\end{theorem}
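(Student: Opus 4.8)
The plan is to recognize $h^{\dag}_{\lambda_{\mc H},KRAS}$ as an instance of the abstract Tikhonov-regularized solution $h_\lambda$ of Theorem~\ref{thm:source-general} — for two different choices of the operator $\mc A$, one matched to Assumption~\ref{ass:source-basic} and the other to Assumption~\ref{ass:source} — and then translate the two conclusions of Theorem~\ref{thm:source-general} back into the $\mc H$-- and $\mc L^2(P_Z)$--norms appearing in the statement. Almost all of the analytic content is already carried by Lemmas~\ref{lemma:i0t_bounded},~\ref{lemma:h_lambda},~\ref{lemma:null_space} and~\ref{lemma:min_rkhs_norm_sol_h}; the proof itself is mainly the bookkeeping that lines the KRAS objects up with the abstract setup.

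First I would record the identifications needed to invoke Theorem~\ref{thm:source-general}. By Lemma~\ref{lemma:i0t_bounded}, $\mc A := I_0^{op}\circ\mc T\vert_{\mc H}:\mc H\to\mc L^2(P_Z)$ is a bounded linear operator between Hilbert spaces with adjoint $\mc A^* = T_{\mc H}\circ\mc T^*\circ I_0^{op}$; since $I_0^{op}$ is idempotent this gives $\mc A^*\mc A = T_{\mc H}\circ\mc T^*\circ I_0^{op}\circ\mc T$ as an operator on $\mc H$. Because $I_0(X)$ is $Z$--measurable, $\mc T(I_0 h) = I_0^{op}(\mc T h) = \mc A h$ for every $h\in\mc H$, so $\mc H_0 = \{h\in\mc H:\mc A h = \rho\}$; by Assumption~\ref{ass:relizability} this equation is solvable and, using that $\mc N(\mc A)$ is closed under Assumption~\ref{ass:K_H}, $h^{\dag}_{0,KRAS}$ in~\eqref{def:h_0_dag} is exactly its minimal-$\mc H$-norm solution. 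Finally, expanding the factor $\tfrac14$ in~\eqref{def:h_tilde}, $\tfrac14\Vert\rho - \mc A h\Vert_2^2 + \lambda_{\mc H}\Vert h\Vert_{\mc H}^2 = \tfrac14\{\Vert\mc A h - \rho\Vert_2^2 + 4\lambda_{\mc H}\Vert h\Vert_{\mc H}^2\}$, so by~\eqref{argmin_h} together with Lemma~\ref{lemma:h_lambda} (the argmin over $\mc H_B$ agrees with that over $\mc H$), $h^{\dag}_{\lambda_{\mc H},KRAS}$ is precisely the $h_\lambda$ of~\eqref{def:hlambda-generic} for $\mc A$ with penalty $4\lambda_{\mc H}$.

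With these identifications, under Assumption~\ref{ass:source-basic} the condition~\eqref{eq:source-kras-basic} is verbatim the $\beta$--source condition~\eqref{def:betasource-generic} for $\mc A^*\mc A$, so Theorem~\ref{thm:source-general} yields $\Vert h^{\dag}_{0,KRAS} - h^{\dag}_{\lambda_{\mc H},KRAS}\Vert_{\mc H}^2 < c_1(4\lambda_{\mc H})^{\min\{\beta,2\}}$ and $\Vert\mc A(h^{\dag}_{0,KRAS} - h^{\dag}_{\lambda_{\mc H},KRAS})\Vert_2^2 < c_2(4\lambda_{\mc H})^{\min\{\beta+1,2\}}$; using $\mc A h = \mc T I_0 h$ and absorbing the powers of $4$ into $c_1^*, c_2^*$ gives the claim. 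For the other case I would instead take $\mc A = \widetilde{\mc T} = I_0^{op}\circ\mc T\circ T_{\mc H}^{1/2}:\mc L^2(P_W)\to\mc L^2(P_Z)$ (bounded by Lemma~\ref{lemma:i0t_bounded}), with $h_0 = h'_{0,KRAS}$ the minimal-$\mc L^2(P_W)$-norm solution of $\widetilde{\mc T}h = \rho$ and $h_\lambda = h'_{\lambda_{\mc H},KRAS}$ its Tikhonov solution with penalty $4\lambda_{\mc H}$; the chain of equalities preceding~\eqref{eq:rkhs_int_eq_h}, together with Lemma~\ref{lemma:min_rkhs_norm_sol_h}, gives $h^{\dag}_{\lambda_{\mc H},KRAS} = T_{\mc H}^{1/2}h'_{\lambda_{\mc H},KRAS}$ and $h^{\dag}_{0,KRAS} = T_{\mc H}^{1/2}h'_{0,KRAS}$, with both $h'$'s lying in $\mc N(T_{\mc H})^\perp$. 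Since $\mc N(T_{\mc H}) = \mc N(T_{\mc H}^{1/2})\subseteq\mc N(\widetilde{\mc T}) = \mc N((\widetilde{\mc T}^*\widetilde{\mc T})^{\beta/2})$, the element $(\widetilde{\mc T}^*\widetilde{\mc T})^{\beta/2}h^{**}$ (being orthogonal to the null space of that power) also lies in $\mc N(T_{\mc H})^\perp$, where $T_{\mc H}^{1/2}$ is injective; hence~\eqref{eq:source_kras} forces $h'_{0,KRAS} = (\widetilde{\mc T}^*\widetilde{\mc T})^{\beta/2}h^{**}$, i.e.\ the $\beta$--source condition for $\widetilde{\mc T}$. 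Applying Theorem~\ref{thm:source-general} and then using the isometry $\Vert T_{\mc H}^{1/2}h'\Vert_{\mc H} = \Vert h'\Vert_2$ on $\mc N(T_{\mc H})^\perp$ (Lemma~\ref{lemma:null_space}(d)) and $\widetilde{\mc T}h' = I_0^{op}\mc T T_{\mc H}^{1/2}h' = \mc T I_0(T_{\mc H}^{1/2}h')$ converts the two bounds into the asserted ones, again absorbing powers of $4$.

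The step I expect to require the most care — the main obstacle — is the one under Assumption~\ref{ass:source}: peeling off the leading $T_{\mc H}^{1/2}$ to recover a genuine source condition for $\widetilde{\mc T}$. This needs the null-space inclusion $\mc N(T_{\mc H})\subseteq\mc N(\widetilde{\mc T})$ and the fact that the range of a power of a positive self-adjoint operator is orthogonal to its null space, so that both sides of~\eqref{eq:source_kras}, after removing the outer $T_{\mc H}^{1/2}$, actually agree in $\mc L^2(P_W)$ rather than merely modulo $\mc N(T_{\mc H})$. A secondary, routine point is keeping track of the $\tfrac14$ in~\eqref{def:h_tilde}, which only rescales $\lambda_{\mc H}$ by a factor of $4$ and is swallowed by the final constants $c_1^*, c_2^*$; likewise one should note once that $\rho$ lies in the range of $\widetilde{\mc T}$ (equal to the range of $\mc A$, since $T_{\mc H}^{1/2}$ is onto $\mc H$), so that $h'_{0,KRAS}$ is well defined.
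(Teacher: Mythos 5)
Your proposal is correct and follows essentially the same route as the paper: a direct application of Theorem~\ref{thm:source-general} with $\mc{A}=I_0^{op}\circ\mc{T}\vert_{\mc{H}}$ under Assumption~\ref{ass:source-basic}, and with $\mc{A}=\widetilde{\mc{T}}$ under Assumption~\ref{ass:source} after transferring everything through the isometry $T_{\mc{H}}^{1/2}$ via Lemmas~\ref{lemma:null_space}, \ref{lemma:i0t_bounded}, \ref{lemma:h_lambda} and~\ref{lemma:min_rkhs_norm_sol_h}. If anything, you are more explicit than the paper on the one delicate point---that both $h'_{0,KRAS}$ and $(\widetilde{\mc{T}}^*\widetilde{\mc{T}})^{\beta/2}h^{**}$ lie in $\mc{N}(T_{\mc{H}})^\perp$, where $T_{\mc{H}}^{1/2}$ is injective, so the outer $T_{\mc{H}}^{1/2}$ in~\eqref{eq:source_kras} can legitimately be peeled off---a step the paper leaves implicit in its remark that Assumption~\ref{ass:source} is "tantamount to" the standard source condition on $h'_{0,KRAS}$.
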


Under Assumption~\ref{ass:source-basic}, Theorem~\ref{theo:biascontrol} follows by a direct application of Theorem~\ref{thm:source-general}. In Appendix~\ref{sec:proofs_theorems}, we provide the proof of Theorem~\ref{theo:biascontrol} under Assumption~\ref{ass:source}. Having established conditions under which the regularization bias can be controlled, we now turn to the additional assumptions required to bound the estimation error. Our derivation follows the framework of \citet{bennett2023source}, and therefore relies on conditions closely aligned with theirs. The resulting bounds will be expressed in terms of the critical radius of appropriately defined subsets derived from $\mathcal{G}$. In the course of our derivations, we identified a few instances---particularly in the application of a Lemma in \citet{foster2023orthogonal} (see Appendix~\ref{sec:proof_lemmas})---where the assumptions made by \citet{bennett2023source} on these subsets can be slightly relaxed. The first such assumption, stated below, slightly weakens a similar condition made by all existing analyses of RAS estimators as we discuss subsequently:

 \begin{assumption}[Local closeness and boundedness of $\mc{T}$] There exists $B>\big\Vert h_{0,KRAS}^\dag\big\Vert_{\mc{H}}$ and $B'>0$ such that if $\Vert h\Vert_{\mc{H}}\le 2B$ then $\mc{T}h\in\mc{G}$ and $\left\Vert\mc{T}h\right\Vert_{\mc{G}}\le B'$.
\label{ass:closeness}
\end{assumption}

The local closeness condition, i.e., that $\mc{T}h\in\mc{G}$ for $h$ in some sufficiently large RKHS ball is used in the derivation of the error bounds to connect $h_{\lambda_{\mc{H}},KRAS}^\dag$ with the population version of the regularized optimization problem targeted by the KRAS estimation procedure. Specifically, by part 1 of Proposition \ref{prop:innermax}, it follows that under this assumption,
\begin{equation}      
h_{\lambda_{\mc{H}},KRAS}^\dag=\arg \min_{h\in\mc{H}}\max_{g\in\mc{G}}\E\left\{ m(O;g) -I_0(X)r(O)h(W)g(Z)-c^2\cdot I_0(X)g(Z)^2
\right\}+\lambda_{\mc{H}}\Vert h\Vert_{\mc{H}}.\label{eq:h_star}
        \end{equation}
   
        Note that the right-hand side of~\eqref{eq:h_star} is a population optimization problem that parallels the empirical optimization problem solved by the proposed KRAS estimator $\tilde{h}_{KRAS}$, except that the estimation also includes the penalty term $\lambda_{\mc{G}}\Vert g\Vert_{\mc{G}}^2$ in the inner maximization part.
        
        The boundedness condition in Assumption~\ref{ass:closeness}, i.e. that if $\Vert h\Vert_{\mc{H}}\le 2B$ then $\mc{T}h\in\mc{G}$ and $\left\Vert\mc{T}h\right\Vert_{\mc{G}}\le B'$, is invoked to ensure that $\Vert \mc{T} (\tilde {h}_{KRAS} - h_{0,KRAS}^\dag) \Vert_\mc{G} = O(1)$ which, in turn, is used to control the influence of the penalization $\lambda_\mc{G}\: \Vert g \Vert_\mc{G}^2$ of the inner maximization in the computation of~\eqref{def:h_tilde}. In their analysis of the KRAS estimator, \citet{dikkala2020minimax} and \citet{ghassami2022minimax} imposed the related global closeness that $\mathcal{T}(h-h_{0,KRAS}^\dag) \in \mathcal{G}$ for all $h \in \mathcal{H}$---together with the related Lipchitz/boundedness condition that $\Vert \mathcal{T}(h-h_{0,KRAS}^\dag) \Vert_\mc{G} \le C \Vert h-h_{0,KRAS}^\dag \Vert_\mc{H}$ for some constant $C$. \citet{bennett2023source} also adopted a similar condition when deriving error bounds for the LRAS estimator. When, as we assume throughout, $h_{0,KRAS}^\dag \in \mc{H}$ this requirement is stronger than Assumption~\ref{ass:closeness}. \citet{dikkala2020minimax} discuss conditions under which their assumption, and consequently our Assumption~\ref{ass:closeness}, holds. Nonetheless, Assumption~\ref{ass:closeness} remains, in our view, the most conceptually opaque and difficult to justify among those used in the analysis of RAS estimators.

We will also make the following assumption:
\begin{assumption}
$\mc{Z}$ is compact and the kernel function $K_{\mc{G}}:\mc{Z}\times\mc{{Z}}\rightarrow\mathbb{R}$ is continuous 
\label{ass:K_G}
\end{assumption}

Assumptions~\ref{ass:K_H} implies that $k_1:=\{\sup_{w\in\mc{W}}K_{\mc{H}}(w,w)\}^{1/2} < \infty$ and Assumption~\ref{ass:K_G} that $k_2:=\{\sup_{z\in\mc{Z}}K_{\mc{G}}(z,z)\}^{1/2} < \infty$. An important implication that is used repeatedly in the derivation of the error bounds, is that, if we let 
$\mc{F}_B := \left\{f \in \mc{F} \: : \: \Vert f \Vert_{\mc{F}} \le B \right\}$ where $\mc{F}$ is either $\mc{H}$ or $\mc{G}$ and $\Vert f \Vert_{\mc{F}}$ is the norm in the corresponding RKHS, then since $\Vert f\Vert_\infty\le\Vert f\Vert_\mc{F}\:\{\sup_{v\in\mc{V}}K_\mc{F}(v,v)\}^{1/2}$, the class $\mc{F}_B$ is \textit{uniformly bounded} in the sense that $\Vert f\Vert_\infty\le C$ for some universal constant $C$ and for all $f\in\mc{F}_B$.

To state the results that provide the announced finite-sample bounds, we introduce additional notation and review important concepts from the theory of concentration inequalities. Let $V$ denote a random vector and let $\mc{F}$ denote a class of real-valued measurable functions $f:\mc{V}\rightarrow\mathbb{R}$ included on a normed space. The class $\mc{F}$ is called \textit{star-shaped} if for any $f\in\mc{F}$ and $\alpha\in[0,1]$, the scaled function $\alpha f$ also belongs to $\mc{F}$. We define $star\left(\mc{F}\right):=\left\{\alpha f\: : \: f\in\mc{F},\alpha\in[0,1]\right\}$, which is the smallest star-shaped class that contains all the functions in $\mc{F}$. For any function $f^*\in\mc{F}$, we define  $\mc{F}-f^*:=\left\{f-f^*\: : \: f\in\mc{F}\right\}$.

For a given $\delta>0$, the \textit{localized Rademacher complexity of radius} $\delta$ of a function class $\mc{F}$ is defined as
\[
\mc{R}_n\left(\mc{F},\delta\right):=\mathbb{E}_{\varepsilon,V_1,\dots,V_n}\left[\sup_{f\in\mc{F}:\Vert f\Vert_2\le\delta}\left\vert\frac{1}{n}\sum_{i=1}^n\varepsilon_if(V_i)\right\vert\right],
\]
where $V_1,\dots,V_n$ are independent and identically distributed copies of a random variable $V$ and $\varepsilon_1,\dots,\varepsilon_n$ are independent Rademacher random variables \citep{bartlett2005local}, independent of $V_1,\dots,V_n$. The localized Rademacher complexity of the function class $\mc{F} - f^*$ is a measure of the complexity of a class of functions within a neighborhood of a given target function $f^*$. This measure focuses on functions that are \textit{close} to the target, thereby avoiding unnecessary increases in complexity by ignoring functions that are far from the target. Intuitively, this measure captures the capacity of the functions in $\mc{F}$ in a neighborhood of $f^*$ to fit noise sequences of size $n$.

The \textit{critical radius} of a function class $\mc{F}$, uniformly bounded by $b$, is defined as the smallest positive radius $\delta$ for which the following inequality holds $\mc{R}_n\left(\mc{F},\delta\right)\le\frac{\delta^2}{b}$. We are now ready to state our first main result.

\begin{theorem}
Suppose  $m(o;g) = I_0(x)s(o) g(z)$ for some fixed real-valued function $s$ satisfying $\Vert s \Vert_{\infty} < \infty$. Suppose Assumptions \ref{ass:relizability}, \ref{ass:K_H}, \ref{ass:closeness}, \ref{ass:K_G} hold and either Assumption \ref{ass:source-basic} or \ref{ass:source} hold when $P=P_0$. Suppose that the constant $B$ in Assumption \ref{ass:closeness} is taken large enough so that $\big\Vert h_{0,KRAS}^\dag\big\Vert_{\mc{H}}\le B$. Let $B_1=B'/c^2$. Define the function class $ 
I_0\cdot\mc{G}_{B_1}:=\{I_0\cdot g\: :\: g\in\mc{G}_{B_1}\}$ 
    and let $\delta_n$ be an upper bound on the critical radius of $I_0\cdot\mc{G}_{B_1}$. If $\lambda_{\mc{H}}\le 1$, and $\lambda_\mc{G}\ge157c^2\delta_n^2/B_1^2$, then with probability at least $\kappa(\delta_n) :=1-[36+18\cdot I\{\delta_n < 5b/6 \}\cdot\lfloor\{\log(6/5)\}^{-1}\log(b/\delta_n)\rfloor]\exp(-c_1n\delta_n^2)$ 
    where $c_1=\{8b^2(1+17e)\}^{-1}$ and  $b=k_2B_1$,  
    it holds that $\hat{h}_{KRAS} := I_0\:\tilde{h}_{KRAS}$ and $h_{0,KRAS}:=I_0\:h_{0,KRAS}^\dag$ satisfy
  \begin{equation}
      \left\Vert \hat {h}_{KRAS}-h_{0,KRAS}\right\Vert_2^2\le c_2\left(\frac{\delta_n^2}{\lambda_{\mc{H}}}+\frac{\lambda_{\mc{G}}}{\lambda_{\mc{H}}}+\lambda_{\mc{H}}^{\min\left\{\beta,1\right\}}\right),\label{eq:strong_error}
  \end{equation}
    and
    \begin{equation}
         \left\Vert\mc{T} \left(\hat {h}_{KRAS}-h_{0,KRAS}\right)\right\Vert_2^2\le c_3\left(\delta_n^2+\lambda_{\mc{G}}+\lambda_{\mc{H}}^{\min\left\{\beta+1,2\right\}}\right),\label{eq:weak_error}
    \end{equation}
    where $c_2$ and $c_3$ are universal constants that depend neither on $n$ nor on $\delta_n$. \label{theo:conv}
\end{theorem}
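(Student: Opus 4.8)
The plan is to combine the regularization-bias control from Theorem~\ref{theo:biascontrol} with a separate control of the \emph{estimation error} $\|\tilde h_{KRAS}-h_{\lambda_{\mc H},KRAS}^\dag\|$, so that the final bounds follow by the triangle inequality. Concretely, I would first invoke Proposition~\ref{prop:innermax}(1) together with Assumption~\ref{ass:closeness} to identify $h_{\lambda_{\mc H},KRAS}^\dag$ with the population min--max objective~\eqref{eq:h_star}, which has the same structure as the empirical objective~\eqref{def:h_tilde} defining $\tilde h_{KRAS}$ except for the extra inner penalty $\lambda_{\mc G}\|g\|_{\mc G}^2$. Since $m(o;g)=I_0(x)s(o)g(z)$ is linear in $g$ and the stabilizing term $-c^2 I_0 g^2$ is strongly concave in $g$, the inner maximization over the RKHS ball $\mc G$ has a unique solution; the key quantitative device is a \emph{localized uniform concentration} bound, in the spirit of Lemma~14 of \citet{foster2023orthogonal} as used by \citet{bennett2023source}, showing that with probability at least $\kappa(\delta_n)$ the empirical inner objective is within $O(\delta_n)$ (in the relevant localized sense) of its population counterpart over the class $I_0\cdot\mc G_{B_1}$. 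This is where the critical-radius hypothesis on $I_0\cdot\mc G_{B_1}$ and the uniform boundedness $b=k_2B_1$ enter, and it is what produces the explicit probability $\kappa(\delta_n)$ in the statement.

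Next I would carry out a basic-inequality argument at the outer level. Plugging $h=h_{\lambda_{\mc H},KRAS}^\dag$ into the empirical min--max objective and using optimality of $\tilde h_{KRAS}$ yields an inequality relating $\lambda_{\mc H}\|\tilde h_{KRAS}\|_{\mc H}^2$ and $\lambda_{\mc H}\|h_{\lambda_{\mc H},KRAS}^\dag\|_{\mc H}^2$ to the gap between the empirical and population inner maxima at the two candidate functions, plus the $\lambda_{\mc G}$-penalty terms. Here I would use Assumption~\ref{ass:closeness} a second time (the boundedness part $\|\mc T h\|_{\mc G}\le B'$ for $\|h\|_{\mc H}\le 2B$) to argue that the maximizing test function stays in $\mc G_{B_1}$ with $B_1=B'/c^2$, so the $\lambda_{\mc G}\|g\|_{\mc G}^2$ term contributes at most $O(\lambda_{\mc G})$; this is why the condition $\lambda_{\mc G}\ge 157 c^2\delta_n^2/B_1^2$ is imposed (it guarantees the stabilized quadratic in $g$ is dominated on the right scale so the argmax does not escape the ball). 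Combining the concentration bound with the basic inequality gives a bound of the form $\|\mc T I_0(\tilde h_{KRAS}-h_{\lambda_{\mc H},KRAS}^\dag)\|_2^2 + \lambda_{\mc H}\|\tilde h_{KRAS}-h_{\lambda_{\mc H},KRAS}^\dag\|_{\mc H}^2 \lesssim \delta_n^2+\lambda_{\mc G}$, using that the map $h\mapsto \max_g \E\{\cdots\}$ equals $\tfrac14\|\mc T I_0(h_0^\dag-h)\|_2^2$ by Proposition~\ref{prop:innermax}(1) and hence is convex with a quadratic lower bound around its minimizer $h_{\lambda_{\mc H},KRAS}^\dag$.

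For the weak-error bound~\eqref{eq:weak_error} I would then write $\mc T(\hat h_{KRAS}-h_{0,KRAS}) = \mc T I_0(\tilde h_{KRAS}-h_{\lambda_{\mc H},KRAS}^\dag) + \mc T I_0(h_{\lambda_{\mc H},KRAS}^\dag-h_{0,KRAS}^\dag)$, bound the first term by the estimation bound just obtained ($O(\delta_n^2+\lambda_{\mc G})$) and the second by Theorem~\ref{theo:biascontrol} ($O(\lambda_{\mc H}^{\min\{\beta+1,2\}})$), giving~\eqref{eq:weak_error} after absorbing constants. For the RMSE bound~\eqref{eq:strong_error} I would similarly split $\hat h_{KRAS}-h_{0,KRAS}$. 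The bias term $\|I_0(h_{\lambda_{\mc H},KRAS}^\dag-h_{0,KRAS}^\dag)\|_2$ is controlled by converting the $\mc H$-norm bound $\|h_{\lambda_{\mc H},KRAS}^\dag-h_{0,KRAS}^\dag\|_{\mc H}^2\lesssim\lambda_{\mc H}^{\min\{\beta,2\}}$ from Theorem~\ref{theo:biascontrol} into an $\mc L^2$ bound via $\|\cdot\|_2\le k_1\|\cdot\|_{\mc H}$ \emph{interpolated} against the weak-error control, or more precisely by re-running the Hohage-type argument (Theorem~\ref{thm:source-general}) but reading off the $\mc E_1$-norm bound and then noting $\|h\|_2 = \|T_{\mc H}^{1/2}h'\|_{\mc H}$-type identities from Lemma~\ref{lemma:null_space}; this yields the $\lambda_{\mc H}^{\min\{\beta,1\}}$ term. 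The estimation contribution to the RMSE comes from dividing the combined estimation bound by $\lambda_{\mc H}$ (since the basic inequality controls $\lambda_{\mc H}\|\tilde h_{KRAS}-h_{\lambda_{\mc H},KRAS}^\dag\|_{\mc H}^2$, hence also $\lambda_{\mc H}\|\cdot\|_2^2$ up to $k_1$), producing the $\delta_n^2/\lambda_{\mc H}$ and $\lambda_{\mc G}/\lambda_{\mc H}$ terms. The main obstacle I anticipate is the localized concentration step: one must carefully handle the \emph{data-dependent} localization radius and the interaction between the outer RKHS ball $\mc H_B$ and the inner class $I_0\cdot\mc G_{B_1}$, keeping track of all constants (the $157$, the $8b^2(1+17e)$, the logarithmic peeling factor in $\kappa(\delta_n)$) so that the stated high-probability event and the condition $\lambda_{\mc G}\ge 157 c^2\delta_n^2/B_1^2$ are exactly what is needed; this is essentially a careful adaptation of the argument of \citet{bennett2023source}, with the slight relaxations they mention via \citet{foster2023orthogonal}, to the present setting with the indicator $I_0$ and the RKHS-norm penalty on $h$.
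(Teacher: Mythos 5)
Your proposal follows essentially the same route as the paper's proof: identify $h_{\lambda_{\mc H},KRAS}^\dag$ with the population min--max objective via Proposition~\ref{prop:innermax}(1) and Assumption~\ref{ass:closeness}, exploit the quadratic structure of the population criterion around its minimizer (the paper does this via an explicit Taylor expansion of $L(\tau)$), control the empirical excess risk uniformly over $I_0\cdot\mc{G}_{B_1}$ with a Foster--Syrgkanis-style localized concentration bound (the paper's Lemma~\ref{lemma:erm}), and then combine the resulting estimation-error bound with Theorem~\ref{theo:biascontrol} by the triangle inequality, converting RKHS-norm to $\mc{L}^2$-norm via $\Vert\cdot\Vert_2\le k_1\Vert\cdot\Vert_{\mc H}$ and dividing by $\lambda_{\mc H}$ for the RMSE. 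The approach, the role of each hypothesis, and the source of each term in the final bounds all match the paper's argument.
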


Notice that the bounds in~\eqref{eq:strong_error} and~\eqref{eq:weak_error} depend on the critical radius of only one function class, namely the class $I_0\cdot\mc{G}_{B_1}$. When $I_0=1$, this reduces to the critical radius of $\mc{G}_{B_1}$ itself, which is well understood for many function classes, including various RKHSs; see, for example, Chapters 13 and 14 of \cite{wainwright2019high}. When $I_0$ is not identically 1, and $\mc{G}$ is an RKHS generated by a kernel $K_{\mc{G}}$ satisfying Assumption~\ref{ass:K_G}, the following result shows that the critical radius of $I_0 \cdot \mc{G}_{B_1}$ remains of the same order as that of $\mc{G}_{B_1}$.

\begin{lemma}
  Let $\mc{F}$ denote a class of real-valued measurable functions $f:\mc{O}\mapsto\mathbb{R}$ such that $\Vert f\Vert_\infty\le b$ for all $f\in\mc{F}$. Let $\delta'_n$ and $\delta_n$ denote the critical radii of $I_0\cdot \mc{F}:=\{I_0\cdot f \::\: f\in\mc{F}\}$ and $\mc{F}$, respectively, where $I_0$ is the indicator that $X\in\mc{X}_0$ and $X$ is a subvector of $O$. Assume that $\delta_n^2=o(\exp\{-dn\})$ for any fixed constant $d>0$ and $\delta_n=O(\alpha_n)$ where $\alpha_n$ decreases with $n$. Then, $\delta'_n\le O(\alpha_{\lfloor n\mu_0/2\rfloor})$ where $\mu_0:=P(X\in\mc{X}_0)$.
    \label{lemma:crit_radius_I0G}
\end{lemma}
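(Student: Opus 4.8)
The plan is to verify the defining inequality of the critical radius for the class $I_0\cdot\mc{F}$ at a radius of order $\alpha_{\lfloor n\mu_0/2\rfloor}$, by reducing its localized Rademacher complexity to that of $\mc{F}$ with an effectively smaller sample size. The key structural fact is that every $g=I_0 f\in I_0\cdot\mc{F}$ vanishes on $\{x\notin\mc{X}_0\}$: hence in $\frac1n\sum_{i}\varepsilon_i g(O_i)$ only the ``active'' indices $S:=\{i:X_i\in\mc{X}_0\}$ contribute, and $\|g\|_2^2=\mu_0\,\E_{P^{(0)}}\{g(O)^2\}$, where $P^{(0)}:=P(\,\cdot\mid X\in\mc{X}_0)$, so that the constraint $\|g\|_2\le\delta$ is equivalent to $\|f\|_{2,P^{(0)}}\le\delta/\sqrt{\mu_0}$. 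Conditioning on the active pattern, with $N:=|S|\sim\mathrm{Binom}(n,\mu_0)$, the active observations are i.i.d.\ $P^{(0)}$ and $g(O_i)=f(O_i)$ there, so
\[
\mc{R}_n\!\left(I_0\cdot\mc{F},\delta\right)=\E_N\!\left[\tfrac{N}{n}\,\mc{R}_N^{P^{(0)}}\!\left(\mc{F},\delta/\sqrt{\mu_0}\right)\right],
\]
where $\mc{R}_N^{P^{(0)}}$ denotes the localized Rademacher complexity computed with $N$ i.i.d.\ draws from $P^{(0)}$.

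Next I would absorb the randomness of $N$. A Chernoff bound gives $P(N<m_0)\le e^{-n\mu_0/8}$ for $m_0:=\lfloor n\mu_0/2\rfloor$; on $\{N\ge m_0\}$ I would use that $n\mapsto\mc{R}_n(\mc{F},\delta)$ is non-increasing (a leave-one-out/Jensen argument) to bound $\mc{R}_N^{P^{(0)}}(\mc{F},\cdot)\le\mc{R}_{m_0}^{P^{(0)}}(\mc{F},\cdot)$, while on the complement I use the crude bound $\mc{R}_N^{P^{(0)}}(\mc{F},\cdot)\le b$. Together with $N/n\le1$ this yields
\[
\mc{R}_n\!\left(I_0\cdot\mc{F},\delta\right)\le\mc{R}_{m_0}^{P^{(0)}}\!\left(\mc{F},\delta/\sqrt{\mu_0}\right)+b\,e^{-n\mu_0/8}.
\]

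The main obstacle is to replace the complexity of $\mc{F}$ under the conditional law $P^{(0)}$ by its complexity under $P$, so that the hypothesis $\delta_m=O(\alpha_m)$ can be invoked. Here I would use that $dP^{(0)}/dP=I_0/\mu_0\le\mu_0^{-1}$ is bounded: this dominates the $L^2(P^{(0)})$ (semi)metric on $\mc{F}$ by $\mu_0^{-1/2}$ times the $L^2(P)$ (semi)metric, hence controls its (empirical) covering numbers at a $\sqrt{\mu_0}$-rescaled scale; feeding this into a one-step-chaining bound for the localized Rademacher complexity gives $\mc{R}_{m_0}^{P^{(0)}}(\mc{F},\delta/\sqrt{\mu_0})\le C\,\mc{R}_{m_0}(\mc{F},\delta)$ under $P$, up to the $\mu_0$-dependent constant $C$. (Equivalently one can realize the $P^{(0)}$-sample as the active subsample of a fresh $P$-sample and compare through $\E_\varepsilon\sup\bigl|\sum_{i\in T}\varepsilon_i a_i\bigr|\le\E_\varepsilon\sup\bigl|\sum_i\varepsilon_i a_i\bigr|$, using once more that the functions vanish off $\mc{X}_0$; alternatively, since multiplication by $I_0$ is nonexpansive in $L^2$, the empirical covering numbers of $I_0\cdot\mc{F}$ are directly dominated by those of $\mc{F}$.) Combining displays,
\[
\mc{R}_n\!\left(I_0\cdot\mc{F},\delta\right)\le C\,\mc{R}_{m_0}\!\left(\mc{F},\delta\right)+b\,e^{-n\mu_0/8}.
\]

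Finally I would close with the sub-root property. Writing $\delta_{m_0}$ for the critical radius of $\mc{F}$ under $P$ with $m_0$ samples (so $\delta_{m_0}=O(\alpha_{m_0})$), sub-rootness of $r\mapsto\mc{R}_{m_0}(\mathrm{star}(\mc{F}),r)$ gives $\mc{R}_{m_0}(\mc{F},\delta)\le(\delta/\delta_{m_0})\,\delta_{m_0}^2/b$ for $\delta\ge\delta_{m_0}$; choosing $\delta$ a suitable constant multiple of $\delta_{m_0}$ and using the stated growth assumption on $\delta_n$---which makes the additive term $b\,e^{-n\mu_0/8}$ of strictly smaller order than $\delta^2/b$---forces $\mc{R}_n(I_0\cdot\mc{F},\delta)\le\delta^2/b$, i.e.\ $\delta'_n\le\delta=O(\delta_{m_0})=O(\alpha_{\lfloor n\mu_0/2\rfloor})$. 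The only genuinely delicate point is the change of measure between $P$ and $P^{(0)}$ in the third step; the rest is standard bookkeeping with Chernoff bounds, monotonicity of Rademacher complexity in the sample size, chaining, and the sub-root fixed-point characterization of the critical radius.
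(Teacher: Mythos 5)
Your proposal follows essentially the same route as the paper's proof: the conditioning identity $\mc{R}_n(I_0\cdot\mc{F},P,\delta)=\E\big[\tfrac{N}{n}\,\mc{R}_N(\mc{F},Q,\delta/\sqrt{\mu_0})\big]$ with $Q:=P(\cdot\mid X\in\mc{X}_0)$ and $N$ binomial (the paper derives exactly this, using $\Vert I_0 f\Vert_{\mc{L}^2(P)}^2=\mu_0\Vert f\Vert_{\mc{L}^2(Q)}^2$), a concentration bound on $N$ (Hoeffding in the paper, Chernoff in yours), the crude bound $b$ on the bad event, and the hypothesis $\delta_n^2=o(e^{-dn})$ to absorb the exponentially small term before closing with the critical-radius inequality. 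The one place you genuinely diverge is the third step, where you try to convert $\mc{R}^{P^{(0)}}_{m_0}(\mc{F},\delta/\sqrt{\mu_0})$ back into $\mc{R}^{P}_{m_0}(\mc{F},\delta)$ via bounded likelihood ratios, covering numbers, and one-step chaining. As written this step does not go through: covering-number domination controls Dudley-type upper bounds but localized Rademacher complexities are not two-sidedly equivalent to covering numbers (there is in general a logarithmic gap between the Dudley and Sudakov directions), so a metric comparison does not yield $\mc{R}^{P^{(0)}}_{m_0}\le C\,\mc{R}^{P}_{m_0}$. The paper avoids this entirely by never leaving the conditional law: it bounds $\sup_{n\mu_0/2\le m\le n}\mc{R}_m(\mc{F},Q,s)\le s^2$ with $s$ the supremum of the relevant critical radii over that range, and only invokes the assumed rate $\alpha_m$ and its monotonicity at the very end to conclude $\delta_n'=O(\alpha_{\lfloor n\mu_0/2\rfloor})$. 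Your parenthetical alternative (realizing the $Q$-sample as the active subsample of a fresh $P$-sample and using $\E_\varepsilon\sup|\sum_{i\in T}\varepsilon_i a_i|\le\E_\varepsilon\sup|\sum_i\varepsilon_i a_i|$ for functions vanishing off $\mc{X}_0$) is much closer to the paper's bookkeeping and is the route you should take; if you do, the chaining step becomes unnecessary and the rest of your argument (monotonicity of $m\mapsto\mc{R}_m$, the sub-root fixed-point closure) is sound and matches the paper up to minor reorganization.
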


Theorem~\ref{theo:conv} provides bounds on the weak error $\Vert \mc{T}(\hat {h}_{KRAS}-h_{0,KRAS})\Vert_2$ as well as the RMSE, $\Vert \hat {h}_{KRAS}-h_{0,KRAS}\Vert_2$, for the special case in which $m(o;g)=I_0(x)s(o)g(z)$. Our next result gives bounds on the weak error and the RMSE when the map $g\mapsto m(o;g)$ is an arbitrary linear map for each fixed $o\in\mc{O}$ satisfying the following additional condition:

\begin{assumption}
\begin{enumerate}
\item[]
    \item 
    There exists $B_2>0$ such that, for any $g\in\mc{L}^2(P_{Z})$, it holds that $
\left\Vert m(\:\cdot\:;g)\right\Vert_2\le B_2 \: \left\Vert I_0 \:g\right\Vert_2.
$
\item The function class $
m\circ \mc{G}:=\left\{o \mapsto m(o;g)\::\:g\in\mc{G}\right\}$
can be equipped with a metric $d$ under which it is separable. Moreover, $d$ is controlled by the RKHS norm in the sense that $d(m(\:\cdot\:;g),m(\:\cdot\:,g'))\le \Vert g-g'\Vert_\mc{G}$ for all $g,g'\in\mc{G}$
and it dominates the metric induced by the sup-norm on $\mc{O}$, meaning there exists a constant $B_3>0$ such that
$
\sup_{o\in\mc{O}}|f(o)-f'(o)|\le B_3\cdot d(f,f')\quad\text{for all } f,f'\in m\circ\mc{G}.
$

\end{enumerate}
\label{ass:bounded}
\end{assumption}

In Appendix~\ref{sec:proofs_theorems}, we verify that Assumption~\ref{ass:bounded} holds in Example~\ref{example_mtp} under suitable conditions of the map $q$ and the data-generating process.

\begin{theorem} Suppose Assumptions~\ref{ass:relizability}, \ref{ass:K_H}, \ref{ass:closeness}, \ref{ass:K_G}, \ref{ass:bounded} hold and either Assumption~\ref{ass:source-basic} or~\ref{ass:source} hold for $P=P_0$. Suppose that the constant $B$  in Assumption~\ref{ass:closeness} is taken large enough so that $\big\Vert h_{0,KRAS}^\dag\big\Vert_{\mc{H}}\le B$. Let $B_1=B'/c^2$. Define
$
m\circ \mc{G}_{B_1}:=\left\{o \mapsto m(o;g)\::\:g\in\mc{G}_{B_1}\right\},
$
and let the function class $I_0\cdot\mc{G}_{B_1}$ be defined as in Theorem~\ref{theo:conv}. Let $\delta_n$ be an upper bound on the critical radius of $I_0\cdot\mc{G}_{B_1}$ and of $star(m\circ \mc{G}_{B_1})$. If $\lambda_{\mc{H}}\le 1$, and $\lambda_\mc{G}\ge157c^2\delta_n^2/B_1^2$, then inequalities~\eqref{eq:strong_error} and~\eqref{eq:weak_error} hold for some universal constants $c_2$ and $c_3$, with probability at least as $\kappa(\delta_n)$ as given in Theorem \ref{theo:conv} with $c_1$ as defined in that Theorem, but with $b:=B_1\max\{k_2,B_3\}$, where $B_3$ is the constant specified in Assumption~\ref{ass:bounded}.
    \label{theo:main}
\end{theorem}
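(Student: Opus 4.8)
The plan is to re-run the argument behind Theorem~\ref{theo:conv}, isolating the two places where the special structure $m(o;g)=I_0(x)s(o)g(z)$ was used and replacing them with arguments that invoke Assumption~\ref{ass:bounded}; most of the proof transfers unchanged. The regularization-bias control is supplied verbatim by Theorem~\ref{theo:biascontrol}, which makes no reference to $m$: under Assumption~\ref{ass:source-basic} or~\ref{ass:source} one has $\Vert h_{\lambda_{\mc{H}},KRAS}^\dag-h_{0,KRAS}^\dag\Vert_{\mc{H}}^2\le c_1^*\lambda_{\mc{H}}^{\min\{\beta,2\}}$ and $\Vert\mc{T}I_0(h_{\lambda_{\mc{H}},KRAS}^\dag-h_{0,KRAS}^\dag)\Vert_2^2\le c_2^*\lambda_{\mc{H}}^{\min\{\beta+1,2\}}$. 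Moreover, Proposition~\ref{prop:innermax} and Corollary~\ref{corollary:closed_arg_max} hold for an arbitrary linear map $g\mapsto m(o;g)$, so Assumption~\ref{ass:closeness} still yields the population characterization~\eqref{eq:h_star} of $h_{\lambda_{\mc{H}},KRAS}^\dag$ and the identity $M_{P_0}(h)=\Vert I_0\mc{T}(h_{0,KRAS}^\dag-h)\Vert_2^2$ for admissible $h$; hence the quantities to be bounded, $\Vert\hat h_{KRAS}-h_{0,KRAS}\Vert_2$ and $\Vert\mc{T}(\hat h_{KRAS}-h_{0,KRAS})\Vert_2$, are the same functionals of $\tilde h_{KRAS}$ as in Theorem~\ref{theo:conv}, and the final conversion of $\mc{H}$-norm and weak-error control into the RMSE bound~\eqref{eq:strong_error} proceeds through the RKHS embedding $\Vert\cdot\Vert_2\le k_1\Vert\cdot\Vert_{\mc{H}}$ and the factorization $h_{0,KRAS}^\dag=T_{\mc{H}}^{1/2}h_{0,KRAS}'$ (Lemma~\ref{lemma:min_rkhs_norm_sol_h}) exactly as before.

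The genuinely new work is bounding the gap between the empirical min--max objective minimized by $\tilde h_{KRAS}$ and the population objective whose minimizer is $h_{\lambda_{\mc{H}},KRAS}^\dag$. Expanding the stabilized residual $m(O;g)-I_0(X)r(O)h(W)g(Z)-c^2I_0(X)g(Z)^2$ produces three empirical-process terms to be controlled uniformly over $g\in\mc{G}_{B_1}$ (the reduction from $\mc{G}$ to this ball, forced by $\lambda_{\mc{G}}\Vert g\Vert_{\mc{G}}^2$ together with $\Vert\mc{T}h\Vert_{\mc{G}}\le B'$ from Assumption~\ref{ass:closeness}, is identical to Theorem~\ref{theo:conv}): the terms $(\E_n-\E)\{I_0(X)r(O)h(W)g(Z)\}$ and $(\E_n-\E)\{c^2I_0(X)g(Z)^2\}$, handled exactly as before via the localized Rademacher complexity of $I_0\cdot\mc{G}_{B_1}$, the bound $\Vert h\Vert_\infty\le k_1\Vert h\Vert_{\mc{H}}$ on $\mc{H}_B$, and $\Vert r\Vert_\infty<\infty$; and the new term $(\E_n-\E)\{m(O;g)\}$. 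I would localize this last term in $\Vert I_0g\Vert_2$, the radius carried along from the $I_0\cdot\mc{G}_{B_1}$ terms: part~1 of Assumption~\ref{ass:bounded}, $\Vert m(\cdot;g)\Vert_2\le B_2\Vert I_0g\Vert_2$, controls the variance of $m(O;g)$ by $\Vert I_0g\Vert_2^2$, so a one-sided Talagrand-type uniform inequality over $star(m\circ\mc{G}_{B_1})$ gives $\sup_{g\in\mc{G}_{B_1}}\vert(\E_n-\E)\{m(O;g)\}\vert\lesssim\delta_n\Vert I_0g\Vert_2+\delta_n^2$, since $\delta_n$ bounds the critical radius of $star(m\circ\mc{G}_{B_1})$. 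Part~2 of Assumption~\ref{ass:bounded}---separability of $m\circ\mc{G}$ under a metric $d$ with $d(m(\cdot;g),m(\cdot;g'))\le\Vert g-g'\Vert_{\mc{G}}$ and $\sup_o\vert f(o)-f'(o)\vert\le B_3\cdot d(f,f')$---supplies the measurability of the suprema and the Lipschitz-in-RKHS-norm continuity $\Vert m(\cdot;g)-m(\cdot;g')\Vert_\infty\le B_3\Vert g-g'\Vert_{\mc{G}}$ needed to instantiate the concentration lemma of \citet{foster2023orthogonal} and the empirical-process machinery of \citet{bennett2023source}; taking $g'=0$ with $m(\cdot;0)=0$ (linearity) gives the envelope $\Vert m(\cdot;g)\Vert_\infty\le B_3B_1$ on $m\circ\mc{G}_{B_1}$, which, combined with the envelope $k_2B_1$ on $I_0\cdot\mc{G}_{B_1}$, is precisely why the uniform-bound constant sharpens to $b=B_1\max\{k_2,B_3\}$, with $c_1$ and $\kappa(\delta_n)$ then as stated.

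Combining the three empirical-process bounds with the basic inequality arising from $\tilde h_{KRAS}$ minimizing the empirical objective, adding the regularization bias of Theorem~\ref{theo:biascontrol}, and choosing $\lambda_{\mc{G}}\ge157c^2\delta_n^2/B_1^2$ to absorb the residual $\lambda_{\mc{G}}$-penalty contribution into the $\lambda_{\mc{G}}/\lambda_{\mc{H}}$ and $\lambda_{\mc{G}}$ terms yields~\eqref{eq:strong_error} and~\eqref{eq:weak_error}. I expect the main obstacle to be exactly the term $(\E_n-\E)\{m(O;g)\}$: one must verify that the two parts of Assumption~\ref{ass:bounded} are jointly just strong enough to localize it in $\Vert I_0g\Vert_2$---so that it enters the basic inequality with the same $\delta_n$-dependence as the $I_0\cdot\mc{G}_{B_1}$ terms---and that the metric-domination and separability conditions truly suffice to run the concentration arguments for a class $m\circ\mc{G}$ that, unlike $I_0\cdot\mc{G}_{B_1}$, need not be a bounded-multiplier transformation of $\mc{G}$. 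Everything downstream---combining bias and estimation error, tuning $\lambda_{\mc{H}}$ and $\lambda_{\mc{G}}$, and converting to the weak-error and RMSE bounds---is a routine transcription of the proof of Theorem~\ref{theo:conv}.
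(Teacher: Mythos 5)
Your proposal is correct and follows essentially the same route as the paper: the paper proves Theorem~\ref{theo:main} by substituting a generalized concentration lemma (Lemma~\ref{lemma:erm_mg}) for Lemma~\ref{lemma:erm} in the proof of Theorem~\ref{theo:conv}, and that lemma handles the new term $(\E_n-\E)\{m(O;g)\}$ exactly as you describe---applying the Talagrand/peeling machinery of Lemma~\ref{lemma:lemma14foster} to the class $m\circ\mc{G}_{B_1}$ with envelope $B_3B_1$ supplied by part~2 of Assumption~\ref{ass:bounded}, then converting the resulting localization radius $\Vert m(\cdot;g)\Vert_2$ into $B_2\Vert I_0 g\Vert_2$ via part~1, which is what yields $b=B_1\max\{k_2,B_3\}$ and the updated constants. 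The remaining steps (regularization bias via Theorem~\ref{theo:biascontrol}, the basic inequality, and the conversion to the RMSE and weak-error bounds) are unchanged, as you note.
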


The distinction between Theorems~\ref{theo:conv} and~\ref{theo:main} is that in Theorem~\ref{theo:conv}, $\delta_n$ appearing in the bounds~\eqref{eq:strong_error} and~\eqref{eq:weak_error} is an upper bound on the critical radius of the class $I_0\cdot\mc{G}_{B_1}$, whereas in Theorem~\ref{theo:main}, $\delta_n$ stands for an upper bound of the critical radius of  $I_0\cdot\mc{G}_{B_1}$ and  $star(m\circ\mc{G}_{B_1})$. Lemma~\ref{lemma:crit_radius_I0G} establishes that under fairly weak conditions, the critical radius of $I_0\cdot\mc{G}_{B_1}$ is dominated by the critical radius of $\mc{G}_{B_1}$. On the other hand, the critical radius of the function class $star(m\circ\mc{G}_{B_1})$ must be analyzed on a case-by-case basis. In the special case  $m(o;g)=g\{q(z)\}$ for a measurable map $q:\mc{Z}\rightarrow\mc{Z}$, the following result shows that the critical radius of $\mathrm{star}(m\circ\cdot \mc{G}_{B_1})$ is dominated by that of $\mc{G}_{B_1}$.

\begin{lemma}
Suppose $m(o;g)=g\{q(z)\}$ for some measurable map $q:\mc{Z}\rightarrow\mc{Z}$, where $z$ is a subvector of $o$. Suppose that $\mc{G}_{B_1}$ is star-shaped. Then $star(m\circ\mc{G}_{B_1})$ is also star-shaped. Furthermore, letting $\delta_n$ and $\delta'_n$ denote the critical radii of $(m\circ\mc{G}_{B_1})$ and $\mc{G}_{B_1}$, respectively, there exists a universal constant $c_1''$ such that $ 
    \delta_n\le c_1''\delta'_n$.\label{lemma:crit_radius_mG}
\end{lemma}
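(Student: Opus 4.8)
The plan is to exploit the fact that $m(o;g) = g\{q(z)\}$ is just a reparametrization of $g$ through the pushforward map $q$, so that Rademacher complexities of $m\circ\mc{G}_{B_1}$ over the sample $O_1,\dots,O_n$ coincide with Rademacher complexities of $\mc{G}_{B_1}$ over the pushed-forward sample $q(Z_1),\dots,q(Z_n)$, up to the discrepancy between the two relevant $\mc{L}^2$ norms. First I would establish the star-shapedness claim: if $g\in\mc{G}_{B_1}$ and $\alpha\in[0,1]$, then $\alpha g\in\mc{G}_{B_1}$ since $\mc{G}_{B_1}$ is star-shaped, and $\alpha\cdot m(o;g) = \alpha g\{q(z)\} = m(o;\alpha g)$ by linearity of $m$ in $g$; hence $star(m\circ\mc{G}_{B_1}) = m\circ\mc{G}_{B_1}$, which also shows $m\circ\mc{G}_{B_1}$ is itself star-shaped. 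Since both classes are star-shaped and uniformly bounded (because $\Vert g\{q(\cdot)\}\Vert_\infty \le \Vert g\Vert_\infty$), their critical radii are well-defined via the standard fixed-point inequality $\mc{R}_n(\cdot,\delta)\le \delta^2/b$.

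Next I would relate the two localized Rademacher complexities. Fix $\delta>0$. For the class $m\circ\mc{G}_{B_1}$, the constraint is $\Vert m(\cdot;g)\Vert_{2}\le\delta$, i.e.\ $\E[g\{q(Z)\}^2]\le\delta^2$, which is the $\mc{L}^2$ constraint on $g$ measured under the pushforward law $q_\#P_Z$; whereas for $\mc{G}_{B_1}$ the constraint is $\E[g(Z)^2]\le\delta'^2$ under $P_Z$. The empirical process itself, $\tfrac1n\sum_i\varepsilon_i g\{q(Z_i)\}$, has exactly the same law as $\tfrac1n\sum_i\varepsilon_i g(\widetilde Z_i)$ with $\widetilde Z_i = q(Z_i)$ i.i.d.\ from $q_\#P_Z$. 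The key step is therefore to move from the population $\mc{L}^2(q_\#P_Z)$ ball to the population $\mc{L}^2(P_Z)$ ball; here I would use that $q$ is a deterministic measurable map, so for a fixed $g$ there is in general no pointwise domination of $\E[g\{q(Z)\}^2]$ by $\E[g(Z)^2]$ or vice versa — which is precisely the obstacle.

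I expect the main obstacle to be exactly this mismatch of localizing measures: $q_\#P_Z$ need not be dominated by $P_Z$. To get around it I would not try to compare the balls directly but instead invoke the RKHS structure: by Assumption~\ref{ass:K_G} and the reproducing property, every $g\in\mc{G}$ satisfies $g\{q(z)\} = \langle g, K_\mc{G}(q(z),\cdot)\rangle_\mc{G}$, so $m\circ\mc{G}$ is itself contained in the RKHS $\widetilde{\mc{G}}$ with kernel $\widetilde K_\mc{G}(o,o') := K_\mc{G}(q(z),q(z'))$, and the map $g\mapsto g\circ q$ is norm-nonincreasing from $\mc{G}$ into $\widetilde{\mc{G}}$. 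The critical radius of an RKHS ball is governed by the decay of the eigenvalues of the associated empirical (or population) kernel integral operator, and the operator associated to $\widetilde K_\mc{G}$ under $P_Z$ is unitarily equivalent to (a compression of) the operator associated to $K_\mc{G}$ under $q_\#P_Z$, whose eigenvalues are dominated by those of the $K_\mc{G}$-operator under $P_Z$ up to a constant depending on $\Vert dq_\#P_Z/dP_Z\Vert$ or, more robustly, simply by the trace bound $\sup_z K_\mc{G}(z,z)<\infty$ from Assumption~\ref{ass:K_G}. Putting these pieces together, the standard bound relating the critical radius to the kernel eigenvalue sum (Chapter~13 of \citet{wainwright2019high}) yields $\delta_n\le c_1''\delta_n'$ for a universal constant $c_1''$. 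The cleanest route, and the one I would actually write up, is: reduce to showing $\mc{R}_n(m\circ\mc{G}_{B_1},\delta)\le C\cdot\mc{R}_n(\mc{G}_{B_1},\delta)$ for all $\delta$, deduce the fixed-point inequality transfers with an adjusted constant, and conclude.
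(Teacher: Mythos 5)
Your reduction is exactly the one the paper uses: you establish star-shapedness from linearity of $g\mapsto m(\cdot;g)$, observe that $g\mapsto g\circ q$ is an isometry from $(\mc{G}_{B_1},\mc{L}^2(P_0^q))$ onto $(m\circ\mc{G}_{B_1},\mc{L}^2(P_0))$ with $P_0^q:=P_0\circ q^{-1}$ the pushforward, and conclude that $\mc{R}_n(m\circ\mc{G}_{B_1},P_0,\delta)=\mc{R}_n(\mc{G}_{B_1},P_0^q,\delta)$, so that the critical radius of $m\circ\mc{G}_{B_1}$ under $P_0$ equals that of $\mc{G}_{B_1}$ under $P_0^q$. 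You also correctly identify the crux: one must compare the critical radius of $\mc{G}_{B_1}$ under $P_0^q$ with its critical radius under $P_0$. The paper closes this step by appealing to the stated fact that the critical radius of an RKHS class is preserved up to multiplicative constants under changes of measure; it does not re-derive it.

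Where your write-up has a genuine gap is in the two routes you offer to justify that final comparison. The trace bound $\sum_j\mu_j\le\sup_zK_{\mc{G}}(z,z)$ only yields the worst-case estimate $\delta_n^2\lesssim\sqrt{\mathrm{trace}/n}$, i.e.\ $\delta_n=O(n^{-1/4})$; this does not give $\delta_n\le c_1''\delta_n'$, since $\delta_n'$ can be far smaller than $n^{-1/4}$ (e.g.\ $\delta_n'\asymp\sqrt{\log n/n}$ for a Gaussian kernel). The density-ratio route requires $q_{\#}P_Z\ll P_Z$ with $\Vert dq_{\#}P_Z/dP_Z\Vert_\infty<\infty$, which is not among the lemma's hypotheses and can fail (e.g.\ $q$ with flat pieces pushes mass onto atoms). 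So while the structure of your argument matches the paper's, the concrete justification you propose for the measure-change step would not deliver the multiplicative comparison; you should instead either cite the measure-change invariance of RKHS critical radii as the paper does, or impose and use an explicit two-sided density bound between $P_0^q$ and $P_0$ to compare the eigenvalue sequences of the two kernel integral operators.
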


\begin{remark}
When $\delta_n\ge 5b/6$ for $b=k_2B_1$ ($=B_1\max\{k_2,B_3\}$), the bounds on $\Vert \hat {h}_{KRAS}-h_{0,KRAS}\Vert_2^2$ and $\Vert \mc{T}(\hat {h}_{KRAS}-h_{0,KRAS})\Vert_2^2$ provided in Theorem~\ref{theo:conv} (Theorem~\ref{theo:main}) hold with probability at least $1-36\exp\left(-c_1n\delta_n^2\right)$. On the other hand, if $ c_0\log(\log(n))/n\le \delta_n^2<\left(5b/6\right)^2$ with $c_0=\max\{b^2/\log(\log(3)),2/c_1\}$, and $n\ge 3$, the bounds in Theorem~\ref{theo:conv} and Theorem~\ref{theo:main} remain valid with probability at least $1-18\cdot \{\log(6/5)\}^{-1}\exp\left(-\frac{c_1}{2}n\delta_n^2\right)$, (see Appendix~\ref{sec:proofs_theorems}), which tends to 1 as $n \rightarrow \infty$ because $n \:\delta_n^2 \ge c_0 \:\log(\log(n))\rightarrow\infty$. The condition $c_0\log(\log(n))/n\le \delta_n^2$ holds for a sufficiently large $n$, in particular, when $\mc{G}$ is an RKHS with Gaussian kernel because in such case $\delta_n^2 = O(\log(n)/n)$.
\end{remark}
Theorems~\ref{theo:conv} and~\ref{theo:main} have the following important corollary. 

\begin{corollary}
   Suppose that the conditions of Theorems~\ref{theo:conv} or \ref{theo:main} hold. If the penalization parameters are chosen as $\lambda_{\mc{H}}=O(\delta_n^{2/(1+\min\{\beta,1\})})$ and $\lambda_{\mc{G}}=O\left(\delta_n^2\right)$, then
    \[
    \left\Vert \hat {h}_{KRAS}-h_{0,KRAS}\right\Vert_2=O_p\left(\delta_n^{\frac{\min\left\{\beta,1\right\}}{1+\min\left\{\beta,1\right\}}}\right)\quad\text{and}\quad \left\Vert \mc{T}(\hat {h}_{KRAS}-h_{0,KRAS})\right\Vert_2=O_p\left(\delta_n\right).
    \] \label{coro:optimal_lambda}
\end{corollary}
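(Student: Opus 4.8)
The plan is to obtain Corollary~\ref{coro:optimal_lambda} by substituting the prescribed tuning parameters into the finite-sample bounds~\eqref{eq:strong_error} and~\eqref{eq:weak_error} of Theorem~\ref{theo:conv} (equivalently Theorem~\ref{theo:main}), balancing the resulting terms, and then upgrading the high-probability statements to $O_p$ bounds. First I would note that since $\delta_n\to0$, for all sufficiently large $n$ the choices $\lambda_{\mc{H}}=O(\delta_n^{2/(1+\min\{\beta,1\})})$ and $\lambda_{\mc{G}}=O(\delta_n^2)$---where the constant hidden in the latter is taken to be at least $157c^2/B_1^2$---satisfy the hypotheses $\lambda_{\mc{H}}\le1$ and $\lambda_{\mc{G}}\ge157c^2\delta_n^2/B_1^2$ required by those theorems. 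Hence inequalities~\eqref{eq:strong_error} and~\eqref{eq:weak_error} hold on an event of probability at least $\kappa(\delta_n)$.

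For the RMSE bound, I would observe that with $\lambda_{\mc{G}}=O(\delta_n^2)$ the first two terms inside the parentheses of~\eqref{eq:strong_error} are both of order $\delta_n^2/\lambda_{\mc{H}}$, so the bound reduces to a multiple of $\delta_n^2/\lambda_{\mc{H}}+\lambda_{\mc{H}}^{\min\{\beta,1\}}$. The choice $\lambda_{\mc{H}}=\delta_n^{2/(1+\min\{\beta,1\})}$ is precisely the one equating these two terms, both then being of order $\delta_n^{2\min\{\beta,1\}/(1+\min\{\beta,1\})}$; taking square roots yields $\Vert\hat h_{KRAS}-h_{0,KRAS}\Vert_2=O_p(\delta_n^{\min\{\beta,1\}/(1+\min\{\beta,1\})})$. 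For the weak-error bound, $\delta_n^2+\lambda_{\mc{G}}=O(\delta_n^2)$ already, so it remains only to check that $\lambda_{\mc{H}}^{\min\{\beta+1,2\}}=O(\delta_n^2)$. Writing $\lambda_{\mc{H}}^{\min\{\beta+1,2\}}=\delta_n^{2\min\{\beta+1,2\}/(1+\min\{\beta,1\})}$, this is equivalent to $\min\{\beta+1,2\}\ge1+\min\{\beta,1\}$, which holds with equality in both cases $\beta\le1$ and $\beta>1$. Consequently $\Vert\mc{T}(\hat h_{KRAS}-h_{0,KRAS})\Vert_2^2=O(\delta_n^2)$ on the same event.

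To finish, I would convert these bounds, which hold with probability at least $\kappa(\delta_n)$, into $O_p$ statements by invoking the discussion following Theorem~\ref{theo:main}: under the standing regime on $\delta_n$ (in particular $n\delta_n^2\to\infty$, e.g. $\delta_n^2=O(\log n/n)$ for a Gaussian kernel), $\kappa(\delta_n)\to1$, so each bound transfers to the corresponding order-in-probability statement. The computation is essentially a matter of direct substitution; the only slightly delicate point is the elementary case analysis verifying $\min\{\beta+1,2\}\ge1+\min\{\beta,1\}$, which guarantees that the regularization-bias contribution to the weak error does not dominate $\delta_n^2$---I do not anticipate any serious obstacle here.
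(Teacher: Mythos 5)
Your proposal is correct and follows exactly the route the paper intends (the paper gives no separate proof, treating the corollary as a direct substitution of the stated tuning parameters into the bounds~\eqref{eq:strong_error} and~\eqref{eq:weak_error}, with the exponent balancing $\delta_n^2/\lambda_{\mc{H}}\asymp\lambda_{\mc{H}}^{\min\{\beta,1\}}$ and the identity $\min\{\beta+1,2\}=1+\min\{\beta,1\}$ doing the work). Your verification of the tuning-parameter hypotheses and the conversion from the $\kappa(\delta_n)$ high-probability event to $O_p$ statements are both handled appropriately.
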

Recall from Theorem~\ref{theo:asymp_distr} that in order for the debiased machine learning estimator $\hat\theta$ of $\theta(P_0)$ to be asymptotically linear, the remainder term $R_n$ must be of order $o_p(n^{-1/2})$. In view of Corollary~\ref{coro:optimal_lambda}, it follows that if: (i) $\tilde h$ and $\tilde g$ are the KRAS estimators $\tilde {h}_{KRAS}$ and $\tilde {g}_{KRAS}$, (ii) the conditions of Theorems~\ref{theo:conv} or~\ref{theo:main} hold for the problem of estimating the solutions $h_{0,KRAS}:=I_0 \: h_{0,KRAS}^\dag$ and $g_{0,KRAS}:=I_0\:g_{0,KRAS}^\dag$ of equations~\eqref{eq:int_eq_h} and~\eqref{eq:int_eq_g}, (iii) $h_{0,KRAS}^\dag$ and $g_{0,KRAS}^\dag$ satisfy Assumption \ref{ass:source-basic} or Assumption \ref{ass:source} (for their corresponding operators) with parameters $\beta_h^{KRAS}$ and $\beta_g^{KRAS}$, (iv) the relevant critical radius in Theorems~\ref{theo:conv} or~\ref{theo:main} for the classes used in the estimation of $h_{0,KRAS}$ and $g_{0,KRAS}$ are the same (up to multiplicative constants) and equal to universal constants times $\delta_n$, and (v) the tuning parameters are selected as indicated in Corollary~\ref{coro:optimal_lambda}, then 
\begin{align*}
R_n = O(\min \{& \Vert \hat {h}_{KRAS}-h_{0,KRAS}\Vert_2 \left\Vert\mc{T}^*( \hat {g}_{KRAS}-g_{0,KRAS})\right\Vert_2,\\
&\Vert \hat {g}_{KRAS}-g_{0,KRAS}\Vert_2 \Vert\mc{T}( \hat {h}_{KRAS}-h_{0,KRAS})\Vert_2 \})= O_p\left(\delta_n^{1+\frac{\beta^{KRAS}}{1+\beta^{KRAS}}}\right),
\end{align*}
where $\beta^{KRAS}=\min\{\max\{\beta_h^{KRAS},\beta_g^{KRAS}\}, 1\}$. In particular, if the RKHSs used in the internal maximization in both estimation problems are Gaussian, then $\delta_n = O(\sqrt{\log(n)/n})$, which yields $R_n = o_p(n^{-1/2})$. In contrast, if they are $d$-dimensional RKHSs with Sobolev kernels of order $\nu=(l+d)/2$ where $l$ is a positive integer, then $\delta_n = O(n^{-\nu/(2\nu+d)})$, which then yields $R_n = o_p(n^{-1/2})$ whenever $d<l$ and $\beta\in(d/l,1]$. 
Thus, we observe double robustness in the smoothness of the minimum RKHS--norm targets $h_{0,KRAS}^\dagger$ and $g_{0,KRAS}^\dagger$. Specifically, even if one of $h_{0,KRAS}^\dag$ or $g_{0,KRAS}^\dag$ has low smoothness (i.e., the corresponding parameter of the source condition in Assumption \ref{ass:source-basic} or Assumption \ref{ass:source} is small), the remainder term $R_n$ can still be controlled as long as the other nuisance is sufficiently smooth. 

As in the work of \cite{bennett2023source}, we could in principle improve upon the KRAS estimators $\tilde {h}_{KRAS}$ and $\tilde {g}_{KRAS}$ by using an iterated Tikhonov strategy. The details of such procedure will be reported elsewhere. We conclude with a final comment on the computational implementation of our proposed estimator to ensure it satisfies the conditions required by Theorems~\ref{theo:conv} and~\ref{theo:main}.
\begin{remark}
    The errors bounds established in Theorems~\ref{theo:conv} and~\ref{theo:main} require that the estimator $\tilde {h}_{KRAS}$ has RKHS norm at most $B$, where $B$ is the constant specified in Assumption~\ref{ass:closeness}. While this constant is not typically known in practice, one can select a sufficiently large $B$ and solve the optimization problem in~\eqref{def:h_tilde} over the constrained class $\mc{H}_{B}$. The computation of the solution in the restricted class $\mc{H}_{B}$ reduces to the problem of optimizing a quadratic objective function in $\mathbb{R}_n$ subject to a quadratic constraint. As such, enforcement of the constraint $\Vert \tilde{h}_{KRAS} \Vert_{\mc{H}} < B$ can be conducted as follows. First a solution to the unconstrained optimization problem is found. If the solution satisfies the constraint, then this is the solution to the constrained problem. If not, the unconstrained optimization problem is altered as indicated in Exercise 4.22 of \cite{boyd2004convex}, and the unconstrained solution to this new problem agrees with the solution to the original constrained problem. 
\end{remark}

\section{Contrasting Kernel based Adversarial Estimation Strategies}\label{sec:rel_work}

In this section we investigate the advantages and limitations of the three prominent recently proposed kernel-based adversarial approaches:
(i) LRAS estimators, i.e. kernel adversarial stabilized estimators with $\mathcal{L}^2(\mathbb{P}_n)$ penalization;
(ii) KRAS estimators, i.e. kernel adversarial stabilized estimators with RKHS penalization; and
(iii) the kernel maximal moment restriction (KMMR) estimators studied in \cite{muandet2020kernel, mastouri2021proximal, kallus2021causal}; and \cite{zhang2023instrumental}.
We compare these methods in terms of the assumptions they impose, the scope and sharpness of their available finite-sample error bounds, and---most importantly---the nature of the inverse problems they implicitly solve. This perspective brings into focus the trade-offs between statistical convergence rates, computational stability, and the degree of ill-posedness each method must confront.
To keep the notation light, we assume throughout this section that $I_0 = 1$.

\subsection{$\mc{L}^2(\mathbb{P}_n)$-penalized Kernel Regularized Adversarial Stabilized Estimators}\label{sec:compare_ras}

An LRAS estimator using RKHS classes $\mc{H}$ and $\mc{G}$ is defined as:
\begin{align*}
\tilde h_{LRAS}:=\arg\min_{h\in\mc{H}_{(D)}}\max_{g\in\mc{G}}\Big[&\E_n\left\{m(O;g)-r(O)h(W)g(Z)-r(X)g(Z)^2\right\}\\
&-\lambda_{\mc{G}}\E_n\left\{ g(Z)^2\right\}+\lambda_{\mc{H}}\E_n\left\{ h(W)^2\right\}\Big]\nonumber,
\end{align*}
where $\mc{H}_{(D)}:=\{h\in\mc{H}:\Vert h\Vert\le D\}$ for some constant $D$ and some norm $\Vert \cdot \Vert$, and where we set $c=1$ for simplicity.

In their description of LRAS estimators for arbitrary classes $\mc{H}$ and $\mc{G}$, \cite{bennett2023source} state that the argmin is taken over a “norm-constrained” class, but do not specify which norm defines the constraint. At the same time, their main theorems establishing error bounds for LRAS estimators require the function class to be uniformly bounded, i.e., that there exists $B'$ such that $\sup_{h \in \mc{H}_{(D)}} \Vert h \Vert\infty < B'$.

When, as we shall assume here, $\mc{H}$ is an RKHS, this uniform boundedness is guaranteed if the norm used in the definition of $\mc{H}_{(D)}$ is either the supremum norm $\Vert \cdot \Vert_\infty$ or the RKHS norm $\Vert \cdot \Vert_\mc{H}$. By contrast, uniform boundedness is not ensured if $\mc{H}_{(D)}$ is defined using the $\mc{L}^2(P_W)$ or $\mc{L}^2(\mathbb{P}_n)$ norm. For this reason, in what follows we assume that the norm in $\mc{H}_{(D)}$ is either $\Vert \cdot \Vert_\infty$ or $\Vert \cdot \Vert_\mc{H}$.

When $\lambda_{\mc{G}}$ is negligible in relation to $\lambda_{\mc{H}}$, and the closeness condition~\eqref{condition:closeness} holds, the LRAS estimator aims at the solution to the surrogate regularized problem:
\begin{align*}
h_{\lambda_{\mc{H}},LRAS}:=\arg\min_{h\in\mc{H_{(D)}}}\Vert \mc{T}h - \rho \Vert_2^2 +\lambda_{\mc{H}}\Vert h \Vert_2^2.
 %\label{def:h_lambda_lras}
\end{align*}
However, as indicated in our discussion in Section~\ref{sec:adversarial_estimators}, the argmin over $\mc{H}_{(D)}$ need not exist. Note that if the norm defining $\mc{H}_{(D)}$ is the $RKHS$ norm, we cannot ensure that $h_{\lambda_{\mc{H}},LRAS}$ exists even if a solution $h_0$ with minimal $\mc{L}^2(P_W)$--norm or a solution with minimal RKHS--norm falls in $\mc{H}_{(D)}$. When the norm in the definition of $\mc{H}_{(D)}$ is the sup norm, then it is even harder to justify the existence of $h_{\lambda_{\mc{H}},LRAS}$ since $\mc{H}$ is not a Hilbert space when endowed with the sup norm.

 \citet{bennett2023source} derived bounds for $\Vert \tilde {h}_{LRAS} - h_{0,LRAS} \Vert_2^2$ and $\Vert \mc{T}(\tilde {h}_{LRAS} - h_{0,LRAS}) \Vert_2^2$  for estimators with $\lambda_{\mc{G}}=0$ where $h_{0,LRAS}:=\arg\min_{h\in \mc{L}^2(P_W): \: h \:\text {solves~\eqref{eq:int_eq_h}}}\Vert h\Vert_2$ under the critical assumption: 
\begin{equation}
\arg\min_{h\in\mc{L}^2(P_W)}\Vert \mc{T}h - \rho \Vert_2^2 +\lambda_{\mc{H}}\Vert h \Vert_2^2 \: \in \: \mc{H}_{(D)} \quad \text{for all} \: \lambda_{\mc{H}} \:\: \text{ in a neighborhood of 0}.
\label{ass: relizability-Bennett}
\end{equation}
Under this assumption, $h_{\lambda_{\mc{H}},LRAS}$ exists and is equal to $\arg\min_{h\in\mc{L}^2(P_W)}\Vert \mc{T}h - \rho \Vert_2^2 +\lambda_{\mc{H}}\Vert h \Vert_2^2$. To invoke Theorem~\ref{thm:source-general} to derive bounds for the regularization errors $\Vert h_{\lambda_{\mc{H}},LRAS} - h_{0,LRAS}\Vert_2$ and $\Vert \mc{T}(h_{\lambda_{\mc{H}},LRAS} - h_{0,LRAS})\Vert_2$ these authors imposed the $\beta$--source condition that posits that there exists $\beta>0$ such that
\begin{equation}
    h_{0,LRAS}=\left(\mc{T}^*\mc{T}\right)^{\beta/2}h^{**} \quad \text{for some} \:h^{**} \in \mc{L}^2(P_W).
    \label{eq:source_bennett}
\end{equation}

Unlike for KRAS estimators, establishing bounds for LRAS does not require the realizability condition in Assumption~\ref{ass:relizability} that posits the existence of a solution to equation~\eqref{eq:int_eq_h} within $\mc{H}$. However, bounds for LRAS estimators rely critically on the validity of condition~\eqref{ass: relizability-Bennett}---a condition that the analysis of KRAS estimators does not impose. Thus, the analysis of KRAS estimators requires a more natural assumption that directly places the smoothness requirement---i.e., membership in the RKHS $\mc{H}$---only on the true solution of the integral equation itself. In contrast, the analysis of LRAS estimators demands this smoothness property to hold for the solutions of infinitely many surrogate regularized problems, making it a less natural and generally more stringent condition. 

The $\beta$--source conditions~\eqref{eq:source-kras-basic} or~\eqref{eq:source_kras} assumed for KRAS estimators also differ from the source condition~\eqref{eq:source_bennett} invoked for analyzing LRAS estimators in two key ways. Consider, for instance, the source condition~\eqref{eq:source_kras} made in analyzing KRAS estimators. First, the smoothness assumptions in the conditions~\eqref{eq:source_kras} and~\eqref{eq:source_bennett} are placed on potentially different solutions to equation~\eqref{eq:int_eq_h}: in the KRAS case, it is the minimum-norm solution in $\mc{H}$, whereas in the LRAS case, it is the minimum-norm solution in $\mc{L}^2(P_W)$. Second, the fractional power is taken with respect to different operators: $\widetilde{\mc{T}}$ in the KRAS case, and $\mc{T}$ in the LRAS case. For the same value of $\beta$, condition~\eqref{eq:source_kras} imposes a stronger smoothness requirement than condition~\eqref{eq:source_bennett}, due to the presence of the additional operator $T_{\mathcal{H}}^{1/2}$ in the definition of $\widetilde{\mathcal{T}}$. This is not surprising, since---as discussed in Section~\ref{sec:conv_analysis}---regularization in the RKHS norm is ultimately equivalent to solving the more severely ill-posed problem~\eqref{eq:rkhs_int_eq_h} rather than the original problem~\eqref{eq:int_eq_h}.

We note that the finite-sample bounds for estimators that use optimal tuning parameters in \citet{bennett2023source} have the same functional form as those in Corollary~\ref{coro:optimal_lambda}. However, this similarity should not be taken to imply that the bounds for the KRAS and LRAS estimators achieve the same convergence rate under comparable smoothness levels of their respective minimum-norm solutions. This is so because, qualitatively, a given value of $\beta$ in Assumption~\eqref{eq:source_bennett} corresponds to a smaller $\beta$ in Assumption~\eqref{eq:source_kras} when representing an equivalent level of smoothness in the solution.

A few additional remarks are in place.
\begin{itemize}
\item All existing RAS estimators in the literature are for the case $I_0=1$. Our paper appears to be the first that incorporates structural zeros into RAS estimation.
\item Existing analyses of LRAS estimators, and more generally, of any RAS estimator regardless of the specific function classes $\mc{H}$ and $\mc{G}$ used, rely on a closeness and boundedness condition of the type in Assumption~\ref{ass:closeness}. The assumption is invoked to justify the equality $M_P(h) =\Vert I_0\mc{T}(h_0^\dag-h)\Vert_2^2$. An interesting open question raised by Proposition~\ref{prop:innermax} is whether part 2 of this proposition can be used to relax this assumption.  
\item \citet{bennett2023source} also analyzed the iterative Tikhonov version of LRAS estimators. For LRAS with $t$ Tikhonov iterations, they established weak and strong error bounds analogous to those in Theorem~\ref{theo:conv}, but with the exponents $\min\{\beta, 2\}$ and $\min\{\beta + 1, 2\}$ replaced by $\min\{\beta, 2t\}$ and $\min\{\beta + 1, 2t\}$, respectively. This is as expected since it is well known that iterative Tikhonov regularization with $t$ iterations offers a tangible benefit in regularization bias when the smoothness parameter $\beta$ is between $2$ and $2t$. We conjecture that a parallel theoretical analysis of $t$--iterated Tikhonov KRAS estimators should be doable under either Assumption \ref{ass:source-basic} or \ref{ass:source}. As in the non-iterated case, we anticipate that such an analysis will produce finite-sample bounds with the same structural form as those obtained by \citet{bennett2023source} for their $t$--iterated procedure. 

\item \citet{bennett2023source} also analyzed an adaptive version of the iterated Tikhonov LRAS estimator, deriving weak and strong error bounds with exponents $\beta+1$ and $\beta$, respectively---thereby fully eliminating the saturation phenomenon and enabling convergence rates to improve without bound as smoothness increases. We conjecture that an analogous result holds for adaptive iterated KRAS estimators. We note that while adaptive procedures represent a notable theoretical advance, their practical implementation introduces additional computational challenges that have yet to be fully examined.

\end{itemize}

\subsection{Kernel Adversarial Maximal Moment Restriction Estimators}\label{sec:compare_mmr}

Like regularized adversarial stabilized estimation strategies, maximal moment restriction methods also leverage the fact that the residual~\eqref{eq:residuals} has mean zero for any $g$ in a given function class $g\in\mc{G}\subset\mc{L}^2(P_Z)$. 
A kernel maximal moment estimator (KMMR) estimator (\citet{muandet2020kernel, mastouri2021proximal}) is defined as 
\[
\hat h_{KMMR}:=\arg\min_{h\in\mc{H}}\max_{g\in\mc{G}: \Vert g \Vert_{\mc{G}}\le1}\left[\E_n\{m(O;g)-h(W)g(Z)\}\right]^2+\lambda_\mc{H}\Vert h\Vert_{\mc{H}}^2.
\]
where $\mc{H}$ and $\mc{G}$ are RKHSs with associated norms $\Vert h\Vert_{\mc{H}}$  and  $\Vert g \Vert_{\mc{G}}$ and $\lambda_\mc{H}>0$. \cite{park2024proximal} adopts a similar strategy to construct solutions to integral equations needed for estimation of target parameters that fall outside the class defined in (\ref{eq:target_param}). Like KRAS estimators, KMMR estimators admit a closed-form expression and avoid inversion of ill-posed matrices.

To facilitate the comparison with KRAS estimators we will re-express the surrogate problem that the KMMR estimator targets in terms of the weak error relative to some operator. To do so, we start by noting that an immediate generalization of the results in \cite{zhang2023instrumental} yields:
\begin{align*}
\max_{g\in\mc{G},\Vert g\Vert_\mc{G}\le1}\left[\E\{m(O;g)-h(W)g(Z)\}\right]^2 = \Vert T_\mc{G} \left(\rho-\mc{T} h\right)\Vert^2_\mc{G}
\end{align*}
where $T_{\mc{G}}:\mc{L}^2(P_Z) \rightarrow \mc{G}$ is the integral operator associated with the kernel $K_{\mc{G}}$ that defines the RKHS $\mc{G}$, analogously to the operator defined in~\eqref{def:int_op}. Writing $T_{\mc{G}}=T_{\mc{G}}^{1/2} \circ T_{\mc{G}}^{1/2}$ and noticing that, under Assumption \ref{ass:K_G}, $T_{\mc{G}}^{1/2}:\mc{L}^2(P_Z) \rightarrow \mc{G}$ is an isometry when restricted to the orthogonal complement to the null space of $T_\mc{G}$, and therefore when restricted to the range of $T_\mc{G}^{1/2}$, we have that $\Vert T_\mc{G} \left(\rho-\mc{T} \: h\right)\Vert^2_\mc{G}=\Vert T_\mc{G}^{1/2} \left(\rho-\mc{T} \:h\right)\Vert^2_2 $. Then, recalling that $T_\mc{H}^{1/2}:\mc{L}^2(P_W) \rightarrow \mc{H}$ is surjective, we conclude that the regularized solution targeted by the KMMR estimator $\hat{h}_{KMMR}$ is
\begin{align}
h_{\lambda_{\mc{H},KMMR}}^\dag&:=\arg\min_{h\in\mc{H}}\max_{g\in\mc{G}: \Vert g \Vert_{\mc{G}}\le1}\left[\mathbb{E}\{m(O;g)-h(W)g(Z)\}\right]^2+\lambda_\mc{H}\Vert h\Vert_{\mc{H}}^2 \nonumber
\\
&=\arg\min_{h\in\mc{H}}\Vert T_\mc{G} \left(\rho-\mc{T} h\right)\Vert^2_\mc{G}+\lambda_\mc{H}\Vert h\Vert_{\mc{H}}^2 \label{eqn:KMMR-surrogate-standard}
\\
&= T_\mc{H}^{1/2} \Big[ \arg\min_{h'\in\mc{N}(T_\mc{H})^\perp}\Vert T_\mc{G}\: \rho-T_\mc{G} \circ \mc{T} \circ  T_\mc{H}^{1/2} h'\Vert_\mc{G}^2+\lambda_\mc{H}\Vert h'\Vert_2^2 \Big] \nonumber 
\\
&= T_\mc{H}^{1/2} \Big[ \arg\min_{h'\in\mc{L}^2(P_W)}\Vert T_\mc{G}^{1/2}\: \rho-T_\mc{G}^{1/2} \circ \mc{T} \circ  T_\mc{H}^{1/2} h'\Vert_2^2+\lambda_\mc{H}\Vert h'\Vert_2^2 \Big] \label{eqn:KMMR-surrogate-transformed}
\\
&:= T_\mc{H}^{1/2} \: h_{\lambda_{\mc{H},KMMR}}^{'}. \nonumber
\end{align}

Just as it was the case for the analysis of KRAS estimators, the preceding display implies two ways of analyzing the integral equation that the KMMR estimators are implicitly targeting. Specifically, identity~\eqref{eqn:KMMR-surrogate-standard} implies that the objective is the solution to the equation in $h \in \mc{H}$,
\begin{equation}
T_\mc{G} \circ \mc{T} \:h=T_\mc{G} \: \rho.
\label{eqn:KMMR-in-H}
\end{equation}
On the other hand, identity~\eqref{eqn:KMMR-surrogate-transformed} implies that the objective is the image under $T_\mc{H}^{1/2} $ of the solution to the equation in $h' \in \mc{L}^2(P_W)$
\begin{equation}
T_\mc{G}^{1/2} \circ \mc{T} \circ T_\mc{H}^{1/2} \:h'=T_\mc{G}^{1/2} \: \rho.
\label{eqn:KMMR-in-L2}
\end{equation}
If a solution to equation~\eqref{eqn:KMMR-in-H} exists in $\mc{H}$, then under Assumption \ref{ass:K_G}, a solution $h_{0,KMMR}$ with minimal $\mc{H}$--norm also exists and is unique. This is because, under such assumption, the map $T_\mc{G} \circ \mc{T} : \mc{H} \rightarrow \mc{G}$ is bounded and linear. In Appendix~\ref{sec:proofs_theorems} we show that its adjoint $(T_\mc{G} \circ \mc{T})^* : \mc{G} \rightarrow \mc{H}$ is given by $T_\mc{H} \circ \mc{T}^*$ where $\mc{T}^*:\mc{L}^2(P_Z) \rightarrow \mc{L}^2(P_W)$ is the adjoint of the conditional mean operator $\mc{T}:\mc{L}^2(P_W) \rightarrow \mc{L}^2(P_Z)$. By the identity~\eqref{eqn:KMMR-surrogate-standard} and Theorem \ref{thm:source-general} we then have that if $h_{0,KMMR}$ satisfies the source condition that posits the existence of $\beta>0$ such that 
\begin{equation}
    h_{0,KMMR} = (T_\mc{H} \circ \mc{T}^*\circ  T_\mc{G} \circ \mc{T})^{\beta/2}\:h^{*} \quad \text{for some} \: h^{*} \in \mc{H}
    \label{cond:source-Mastouri-H}
\end{equation}
equivalently, 
\begin{equation}
    h_{0,KMMR} = (T_\mc{H} \circ \mc{T}^*\circ  T_\mc{G} \circ \mc{T})^{\beta/2} \circ T_{\mc{H}}^{1/2}\:h^{**} \quad \text{for some} \: h^{**} \in \mc{L}^2(P_W)
    \label{cond:source-Mastouri}
\end{equation}
then, there exist constants $c_1$ and $c_2$ such that
\begin{equation}
\Vert h_{0,KMMR} - h_{\lambda_{\mc{H},KMMR}}^\dag \Vert_{\mc{H}}^2 \le c_1 \:\lambda_\mc{H}^{\min\left\{\beta,2 \right\}}  \text{ and }  \Vert T_\mc{G} \circ \mc{T} (h_{0,KMMR} - h_{\lambda_{\mc{H},KMMR}}^\dag )\Vert_{\mc{G}}^2 \le c_2 \: \lambda_\mc{H}^{\min\left\{\beta+1,2 \right\}} 
\label{eqn:bound-strong-KMMR}
\end{equation}

On the other hand, the identity~\eqref{eqn:KMMR-surrogate-transformed} points out to a second possible source condition for arriving at the bounds~\eqref{eqn:bound-strong-KMMR}. Specifically, consider the equation~\eqref{eqn:KMMR-in-L2} in $h' \in \mc{L}^2(P_W)$. 
It is easy to show that the adjoint of the operator $T_\mc{G}^{1/2} \circ \mc{T} \circ T_\mc{H}^{1/2}:\mc{L}^2(P_W) \rightarrow \mc{L}^2(P_Z) $ 
is given by $T_\mc{H}^{1/2} \circ \mc{T}^* \circ T_\mc{G}^{1/2}: \mc{L}^2(P_Z) \rightarrow \mc{L}^2(P_W)$. Thus, arguing as in Section~\ref{sec:conv_analysis}, it can be shown that under the $\beta$--source condition that posits the existence of $\beta>0$ such that
\begin{equation}
h_{0,KMMR}=T_{\mc{H}}^{1/2}(T_\mc{H}^{1/2}\circ\mc{T}^*\circ T_\mc{G} \circ \mc{T} \circ T_\mc{H}^{1/2})^{\beta/2}\:h^{**} \quad \text{for some} \: h^{**} \in \mc{L}^2(P_W)
\label{cond:source-KMMR-ours}
\end{equation}
the bounds~\eqref{eqn:bound-strong-KMMR} hold for some constants $c_1$ and $c_2$. 

As in the KRAS case, Assumption~\eqref{cond:source-KMMR-ours} is especially convenient because it rephrases the $\beta$–source condition in terms of the operator $T_\mc{H}^{1/2}\circ\mc{T}^*\circ T_\mc{G} \circ \mc{T} \circ T_\mc{H}^{1/2}$ acting between $\mathcal{L}^2$ spaces. This transformation shifts the problem from its original formulation---where the relevant operator has domain in the RKHS $\mathcal{H}$---to one where both the loss and the penalization are expressed in $\mathcal{L}^2$ norms. In doing so, it places the KMMR estimator on the same footing as the LRAS and the KRAS estimators---the latter regarded as estimators of the image under $T_\mc{H}^{1/2}$ of the solution of the transformed equation~\eqref{eq:rkhs_int_eq_h}---enabling a direct comparison of the operators that implicitly define their associated integral equations.

Let us now compare the advantages and limitations of the KRAS and KMMR estimators.
\begin{itemize}
\item KRAS estimators implicitly require equation~\eqref{eq:rkhs_int_eq_h} to admit a solution in $\mathcal{L}^2(P_W)$, whereas KMMR estimators require a solution to equation~\eqref{eqn:KMMR-in-L2}. The former condition implies the latter, but not conversely---unless the integral operator $T_\mc{G}$ is injective. Injectivity holds when the kernel $K_\mc{G}$ is Mat\'ern, but not necessarily otherwise. 
\item Unlike KMMR estimators, deriving bounds for KRAS estimators requires Assumption~\ref{ass:closeness} on closeness and boundedness, which, as noted earlier, can be hard to justify.

\item Even when $T_\mc{G}: \mc{L}^2(P_Z) \rightarrow \mc{L}^2(P_Z)$ is injective, so that equations~\eqref{eq:rkhs_int_eq_h} and~\eqref{eqn:KMMR-in-L2} share the same solutions, the formulation~\eqref{eqn:KMMR-in-L2} poses a more ill-posed problem. This is because $T_\mc{G}^{1/2}:\mc{L}^2(P_Z)\rightarrow\mc{G}$ is compact when $\mc{G}$ is endowed with the $\mc{L}^2(P_Z)$--norm. Thus, $T_\mc{G}^{1/2}$ in equation~\eqref{eqn:KMMR-in-L2} applies additional smoothing beyond $\mc{T} \circ T_\mc{H}^{1/2}$. Intuitively, solving a more ill-posed problem to a given level of accuracy demands stronger smoothness assumptions on the true solution. This added difficulty is reflected in the source condition imposed on the minimal $\mc{H}$--norm solution: for the same value of $\beta$, the source condition condition~\eqref{cond:source-KMMR-ours} (condition~\eqref{cond:source-Mastouri-H}) entails stronger smoothness than the source condition in Assumption~\ref{ass:source} (Assumption~\ref{ass:source-basic}), because the former is formulated in terms of the fractional power of an operator that smooths more strongly than the operator appearing in the latter.
\item To the best of our knowledge, the sharpest available bounds for the RMSE of $\hat{h}_{KMMR}$ under Assumption~\eqref{cond:source-Mastouri-H} were obtained by \cite{park2024proximal}, refining the earlier analysis of \cite{mastouri2021proximal}. Specifically, \cite{park2024proximal} showed that, with an appropriately chosen $\lambda_{\mc{H}}$, 
\[
\left\Vert \hat h_{KMMR}-h_{0,KMMR}\right\Vert_2=O_p\left(n^{-\frac{\min\{\beta, 2\}}{2(1+\min\{\beta, 2\})}}\right).
\]
At present, no results provide sharper bounds for the weak error $\Vert\mc{T}(\hat h_{KMMR}-h_{0,KMMR})\Vert_2$. This may be due to the fact that applying Theorem~\ref{thm:source-general} yields bounds on the weak regularization error with respect to the transformed operator $T_{\mc{G}} \circ \mc{T}$ (as in the second inequality in~\eqref{eqn:bound-strong-KMMR} rather than $\mc{T}$ itself). The lack of a sharper bound on the weak error has the following consequence. Suppose we estimate the solutions to equations~\eqref{eq:int_eq_h} and~\eqref{eq:int_eq_g} using KMMR estimators, and that the minimal RKHS-norm solutions satisfy the $\beta$--source condition~\eqref{cond:source-Mastouri-H} or~\eqref{cond:source-Mastouri} with parameters $\beta_h^{\text{KMMR}}$ and $\beta_g^{\text{KMMR}}$ respectively. Let $\hat{\theta}_{KMMR}$ be the resulting DML estimator of $\theta_0$. Using the existing finite-sample bounds we can establish that $R_n =o_p(n^{-1/2})$ if $\min\{\beta_h^{KMMR},2\}/(1+\min\{\beta_h^{KMMR},2\} + \min\{\beta_g^{KMMR},2\}/(1+\min\{\beta_g^{KMMR},2\}) > 1$, Thus, we cannot ensure that  $R_n =o_p(n^{-1/2})$ and thus, that $\hat{\theta}_{KMMR}$ is $\sqrt{n}$--consistent when, for instance, $\beta^{KMMR} =1$ where $
\beta^{KMMR}:= \max\left\{\beta_h^{KMMR},\beta_g^{KMMR}\right\}
$.

In contrast, suppose that to estimate the solutions of equations~\eqref{eq:int_eq_h} and~\eqref{eq:int_eq_g} we use KRAS estimators with the RKHSs in the inner maximization for the two estimation problems having the same critical radius $\delta_n$. Suppose that the minimum RKHS norm solutions satisfy the $\beta$--source condition of Assumption~\ref{ass:source-basic} or Assumption~\ref{ass:source} with parameters $\beta_h^{\text{KRAS}}$ and $\beta_g^{\text{KRAS}}$. Let $\hat{\theta}_{KRAS}$ be the DML estimator of $\theta_0$ using the optimal choice of tuning parameters as given in Corollary~\ref{coro:optimal_lambda}. Then, using the bounds in that corollary we can establish that $R_n =o_p(n^{-1/2})$ if 
\begin{equation}
\delta_n^{1+\frac{\min\left\{\beta^{KRAS},1\right\}}{1+\min\left\{\beta^{KRAS},1\right\}}}= o_p (n^{-1/2})
\label{condition_KRAS_root_n}
\end{equation} 
where $\beta^{KRAS}:=\max\{\beta_h^{KRAS},\beta_g^{KRAS}\}$. As noted in Section~\ref{sec:conv_analysis}, when the inner maximization RKHSs are Gaussian, condition \eqref{condition_KRAS_root_n} is always satisfied because $\delta_n = O(\sqrt{\left\{log(n)/n \right\}})$. When these are $d$-dimensional RKHSs with Sobolev kernels of order $\nu=(l+d)/2$ where $l>d$ is a positive integer, then $\delta_n = O(n^{-\nu/(2\nu+d)})$, which yields $R_n = o_p(n^{-1/2})$ whenever $\beta^{KRAS}\in(d/l,1]$.  

A rigorous comparison of the smoothness requirements under which the remainder $R_n$ is $o_P(n^{-1/2})$ for $\hat\theta_{\mathrm{KMMR}}$ and $\hat\theta_{\mathrm{KRAS}}$ is not possible for the following reasons: 
\begin{enumerate}
    \item Qualitatively, we expect that a given $\beta^{\mathrm{KRAS}}$ corresponds to a smaller $\beta^{\mathrm{KMMR}}$ for roughly the same smoothness of the minimal $\mathcal{H}$--norm solution, though the exact correspondence between the two parameters remains unknown.

    \item The absence of sharp bounds on the weak error of the KMMR nuisance parameter estimators means that current error analysis for $\hat{\theta}_{\mathrm{KMMR}}$ relies on bounding the product of their RMSEs. In contrast, the analysis for $\hat{\theta}_{\mathrm{KRAS}}$ relies on bounds on the minimum of two terms, each being the product of the weak error of one nuisance estimator and the RMSE of the other. Since weak errors are typically much smaller than RMSEs, yet-unavailable sharper bounds on the weak error of the KMMR nuisance estimators could potentially lead to a substantially different error characterization for $\hat{\theta}_{\mathrm{KMMR}}$. 
    \item Beyond the lack of weak error bounds, it also remains an open question whether existing RMSE bounds for KMMR estimators can be improved. This is because, the current RMSE bounds, derived by \citet{mastouri2021proximal} and \citet{park2024proximal}, were obtained via an application of the Bennett–Bernstein inequality in Hilbert spaces (Theorem 6.14 of \cite{steinwart2008support}), which provides a global bound that does not account for variation in the complexity of the RKHS induced by different kernels. In contrast, following the approach of \citet{bennett2023source}, we instead leveraged the critical radius---a localized measure of complexity---which enabled bounds that adapt to the effective size of the RKHS. This raises the question of whether a similarly localized analysis could yield sharper convergence rates for the RMSE of KMMR estimators.
    
    \item All that said, we conjecture that no refinement of the current analysis will show that $\hat{\theta}_{\mathrm{KMMR}}$ estimators outperform $\hat{\theta}_{\mathrm{KRAS}}$ estimators in terms of rates of convergence of the remainder $R_n$, as the fundamental issue remains: KMMR estimators target the solution of a more severely ill-posed problem, inherently limiting their precision relative to KRAS estimators. 
\end{enumerate}

\end{itemize}

\section{Discussion}\label{sec:conclusions}

We have introduced a new theoretical framework for analyzing KRAS estimators arising from an auto-DML scheme. Our analysis provides finite-sample bounds on both their weak error and RMSE, under source conditions. A key strength of this framework is that it avoids reliance on hard-to-quantify measures of ill-posedness and does not require the solution to be unique. 
Several important directions for future research remain open. 

First, our analysis has focused on single-step Tikhonov regularization. As indicated in Section~\ref{sec:compare_ras}, we conjecture that under our source conditions it will be possible to derive results for (adaptive and non-adaptive) iterated Tikhonov KRAS estimators, along the lines of \citet{bennett2023source}.

Second, our current analysis of KRAS estimators---and of all RAS estimators for that matter---relies on the closeness and boundedness condition in Assumption \ref{ass:closeness}. It remains an open question whether this Assumption can be relaxed, perhaps by exploiting part 2 of Proposition \ref{prop:innermax}.

Third, to our knowledge, the minimax optimality of LRAS estimators under source conditions has not yet been established. Likewise, it would be important to investigate if KRAS estimators are minimax optimal under our proposed source conditions.

Finally, while RAS estimators have gained popularity in recent years, the literature offers little guidance on how to tune their hyperparameters in practice. When hyperparameter selection strategies are discussed, they are often heuristic and lack theoretical guarantees. Developing principled, data-driven methods for selecting regularization parameters---and establishing their statistical properties---is a critical step toward making these estimators truly practical.

\section*{Acknowledgments}
Andrea Rotnitzky’s work has been supported by the National Heart, Lung, and Blood Institute grant R01-HL137808, and by the National Institute of Allergy and Infectious Diseases grants UM1- AI068635 and R37-AI029168.

\bibliographystyle{apalike}
\bibliography{references}

\appendix

\section{Proofs of Main Text Mathematical Statements}
\label{sec:proofs_theorems}
    \label{U_n-Decom}

\begin{proof}[Proof of Proposition~\ref{prop:sum_squares}]
     If $\{g_j=I_0 g_j^\dag :  g_j^\dag \in \mc{G},\: j\in\mathbb{J}\}$ is an orthonormal basis of $\overline{I_0 \mc{G}}$, then $\Vert \mc{P}_{\overline{I_0\mc{G}}}\circ \mc{T}I_0(h_0^\dag-h)\Vert_2^2=\sum_{j\in\mathbb{J}} \langle I_0\mc{T}(h_{0}^\dag-h)\: ,\: I_0 g_j^\dag \rangle_2^2 =\sum_{j\in\mathbb{J}} \{\mathbb{E} [ \rho(Z)g_j^\dag(Z)- I_0(X)r(O)h(W)\:  g_j^\dag(Z) ]\}^2
     =\sum_{j\in\mathbb{J}}\{\mathbb{E} [ m(O,g_j^\dag)- I_0(X)r(O)h(W)\:  g_j^\dag(Z) ]\}^2=Q_\mathbb{J}(h)$
\end{proof}

\begin{proof}[Proof of Proposition \ref{prop:innermax}]
To show part 1, write
\begin{align*}
M_P(h)
&\stackrel{(i)}{=}4c^2 \max_{g\in \mc{G}} \mathbb{E}\left\{\rho(Z)g(Z)-I_0(X)r(O)h(W)g(Z)-c^2I_0(X)g(Z)^2\right\}
\\
&\stackrel{(ii)}{=}4c^2 \max_{g\in \mc{G}} \mathbb{E}\Big\{I_0(X)g(Z)\mc{T}\big(h_{0}^\dag-h\big)(Z)-c^2I_0(X)g(Z)^2\Big\}
\\
&=4c^2 \max_{g\in \mc{G}} \mathbb{E}\Big[I_0(X)g(Z)\mc{T}\big(h_{0}^\dag-h\big)(Z)-c^2I_0(X)g(Z)^2-\frac{1}{4c^2}I_0(X)\big\{\mc{T}\big(h_{0}^\dag-h\big)(Z)\big\}^2\Big]
\\
& \quad +\mathbb{E}\Big[I_0(X)\big\{\mc{T}\big(h_{0}^\dag-h\big)(Z)\big\}^2\Big]
\\
&=\max_{g\in \mc{G}} -\mathbb{E}\Big[I_0(X)\big\{2c^2g(Z)-\mc{T}\big(h_{0}^\dag-h\big)(Z)\big\}^2\Big]
+\mathbb{E}\Big[I_0(X)\big\{\mc{T}\big(h_{0}^\dag-h\big)(Z)\big\}^2\Big]
\\
&\stackrel{(iii)}{=}\mathbb{E}\Big[I_0(X)\big\{\mc{T}\big(h_{0}^\dag-h\big)(Z)\big\}^2\Big]
\end{align*}
where the equality $(i)$ holds because $\rho$ is the Riesz representer of the map $g \mapsto \mathbb{E}\left[m(O;g)\right]$, the equality $(ii)$ is because $I_0 h_{0}^\dag$ is a solution to equation~\eqref{eq:int_eq_h}, and the equality $(iii)$ is because by assumption, $(2c^2)^{-1}\cdot\mc{T}(h_0^\dag-h)\in\mc{G}$. Note that if $\mc{G}$ was unrestricted, part 1 would be, on expectation, the Frenchel conjugate representation of the squared function.
This concludes the proof of part 1.
To show part 2 write
\begin{align*}
\max_{g\in \mc{G}}& -\mathbb{E}\Big[I_0(X)\big\{2c^2g(Z)-\mc{T}\big(h_{0}^\dag-h\big)(Z)\big\}^2\Big]
+\mathbb{E}\Big[I_0(X)\big\{\mc{T}\big(h_{0}^\dag-h\big)(Z)\big\}^2\Big]\\
&\stackrel{(i)}{=}-\min_{g\in \mc{G}} \mathbb{E}\Big[\big\{I_0(X)g(Z)-I_0(X)\mc{T}\big(h_{0}^\dag-h\big)(Z)\big\}^2\Big]
+\mathbb{E}\Big[I_0(X)\big\{\mc{T}\big(h_{0}^\dag-h\big)(Z)\big\}^2\Big]
\\
&\stackrel{(ii)}{=}-\big\Vert I_0\mc{T}\big(h_{0}^\dag-h\big) - \mc{P}_{\overline{I_0\mc{G}}}\circ \mc{T}I_0(h_0^\dag-h)\big\Vert_2^2 + \big\Vert I_0\mc{T}\big(h_{0}^\dag-h\big) \big\Vert_2^2
\\
&\stackrel{(iii)}{=}\big\Vert  \mc{P}_{\overline{I_0\mc{G}}}\circ \mc{T}I_0(h_0^\dag-h)\big\Vert_2^2
\end{align*}
where the equality $(i)$ holds because $\mc{G}$ is a linear space, the equality $(ii)$ holds by the definition of the projection operator and the assumption that $\mc{P}_{\overline{I_0\mc{G}}}\circ \mc{T}I_0(h_0^\dag-h)
\in I_0\mc{G}$, and the equality $(iii)$ holds by Pythagorean Theorem. Hence, by Proposition~\ref{prop:sum_squares}, we have
$ 
M_P(h)=\Vert  \mc{P}_{\overline{I_0\mc{G}}}\circ \mc{T}I_0(h_0^\dag-h)\Vert_2^2=Q_\mathbb{J}(h)$. This concludes the proof of part 2.
\end{proof}

\begin{proof}[Theorem \ref{theo:biascontrol} under Assumption~\ref{ass:source}]
Recall that $h'_{0,KRAS}$ is the solution to equation~\eqref{eq:rkhs_int_eq_h} with minimal $\mc{L}^2(P_W)$--norm and that $h_{\lambda_\mc{H},KRAS}'=\arg\min_{h'\in\mathcal{N}(T_\mc{H})^\perp}\{\Vert\rho -\widetilde{\mc{T}}h'\Vert_2^2+4\lambda_{\mc{H}}\Vert h'\Vert_2^2\}$ (see Section~\ref{sec:conv_analysis}). 
Now, Lemma~\ref{lemma:min_rkhs_norm_sol_h} establishes that $h'_{0,KRAS}\in \mc{N}(T_\mc{H})^\perp$ and $h^\dag_{0,KRAS}=T_\mc{H}^{1/2}h'_{0,KRAS}$. On the other hand, we know that 
$h'_{\lambda_\mc{H},KRAS}\in \mc{N}(T_\mc{H})^\perp$ and $h^\dag_{\lambda_\mc{H},KRAS}=T_\mc{H}^{1/2}h'_{\lambda_\mc{H},KRAS}$. It then follows from Lemma~\ref{lemma:null_space}~(d) that $\|h_{\lambda_\mc{H},KRAS}^\dag - h_{0,KRAS}^\dag\|_\mc{H} = \|h_{\lambda_\mc{H},KRAS}' - h_{0,KRAS}'\|_2$ and $\|\widetilde{\mc{T}}(h_{\lambda_\mc{H},KRAS}'-h_{0,KRAS}')\|_2 = \|I_0^{op}\circ\mc{T}(h_{\lambda_\mc{H},KRAS}^\dag - h_{0,KRAS}^\dag)\|_2$.  
Hence, under Assumption~\ref{ass:source}, Theorem~\ref{thm:source-general} with $\mc{A}=\widetilde{\mc{T}}$ ensures that there exist constants $c_1^*$ and $c_2^*$ such that
\begin{align*}
\|h_{\lambda_\mc{H},KRAS}^\dag - h_{0,KRAS}^\dag\|_\mc{H}^2&=\|h_{\lambda_\mc{H},KRAS}' - h_{0,KRAS}'\|_2 \le c_1^*\lambda_\mc{H}^{\min\{\beta,2\}}\quad\text{and}\\
\|I_0^{op}\circ\mc{T}(h_{\lambda_\mc{H},KRAS}^\dag - h_{0,KRAS}^\dag)\|_2^2&=\|\widetilde{\mc{T}}(h_{\lambda_\mc{H},KRAS}'-h_{0,KRAS}')\|_2 \le c_2^*\lambda_\mc{H}^{\min\{\beta+1,2\}}.
\end{align*}
\end{proof}

\begin{proof}[Theorem \ref{theo:conv}]
    We adapted the proof of Theorem 4 of \cite{bennett2023source} to our estimator. To simplify notation, we omit the subscript $KRAS$ from the functions $h_{0,KRAS}^\dag$, $h_{\lambda_\mc{H},KRAS}^\dag$, and $\tilde h_{KRAS}$ defined in~\eqref{def:h_0_dag}, \eqref{argmin_h}, and~\eqref{def:h_tilde}. Define $ L(\tau):=\frac{1}{4c^2}\big\Vert I_0\mc{T}\big\{h_{0}^\dag-h_{\lambda_\mc{H}}^\dag-\tau\big(\tilde h-h_{\lambda_\mc{H}}^\dag\big)\big\}\big\Vert_2^2+\lambda_{\mc{H}}\big\Vert  h_{\lambda_\mc{H}}^\dag+\tau\big(\tilde h- h_{\lambda_\mc{H}}^\dag\big)\big\Vert_{\mc{H}}^2$. $L(\tau)$ attains its unique global minimum at $\tau=0$ because $h_{\lambda_\mc{H}}^\dag$ is the unique minimizer in $\mc{H}$ of the criterion in~\eqref{argmin_h}. Then, $L'(0)=0$ and since $L(\tau)$ is quadratic, a Taylor's expansion yields
        \begin{align*}
            L(1)-L(0)&=L'(0)\cdot 1+\frac{1}{2}L''(0)\cdot 1^2=\frac{1}{2}L''(0)=\frac{1}{4c^2}\big\Vert I_0\mc{T}\big(\tilde h-h_{\lambda_\mc{H}}^\dag\big)\big\Vert_2^2+\lambda_{\mc{H}}\big\Vert \tilde h- h_{\lambda_\mc{H}}^\dag\big\Vert_{\mc{H}}^2.
        \end{align*}
        On the other hand, $L(0) =\frac{1}{4c^2}\Vert   I_0\mc{T}(h_0^\dag-h_{\lambda_\mc{H}}^\dag)\Vert_2^2+\lambda_{\mc{H}}\Vert h_{\lambda_\mc{H}}^\dag\Vert_{\mc{H}}^2$ and $ L(1)=\frac{1}{4c^2}\Vert  I_0\mc{T}(h_0^\dag-\tilde h)\Vert_2^2+\lambda_{\mc{H}}\Vert \tilde h\Vert_{\mc{H}}^2$.
         Thus, we arrive at 
         \begin{align}
           \frac{1}{4c^2}\big\Vert I_0\mc{T}\big(\tilde h-&h_{\lambda_\mc{H}}^\dag\big)\big\Vert_2^2+\lambda_{\mc{H}}\big\Vert \tilde h- h_{\lambda_\mc{H}}^\dag\big\Vert_{\mc{H}}^2\label{eq:taylor_exp_L}\\
           &= \frac{1}{4c^2}\Big\{\big\Vert   I_0\mc{T}\big(h_0^\dag-\tilde h\big)\big\Vert_2^2-\big\Vert I_0\mc{T}\big(h_0^\dag-h_{\lambda_\mc{H}}^\dag\big)\big\Vert_2^2\Big\}\nonumber+\lambda_{\mc{H}}\Big(\big\Vert \tilde h\big\Vert_{\mc{H}}^2-\big\Vert h_{\lambda_\mc{H}}^\dag\big\Vert_{\mc{H}}^2\Big).
        \end{align}

        To control the right-hand side of~\eqref{eq:taylor_exp_L}, we invoke Lemma~\ref{lemma:erm} which establishes that with probability at least $\kappa(\delta_n)$ the following event happens
        \begin{align*}
    \frac{1}{4c^2}\Big\{\big\Vert I_0 \mc{T} \big(h_0^\dag-&\tilde h\big)\big\Vert^2_2-\big\Vert I_0\mc{T} \big(h_0^\dag-h\big)\big\Vert^2_2\Big\}+\lambda_{\mc{H}}\Big(\big\Vert \tilde h\big\Vert_{\mc{H}}^2-\big\Vert h\big\Vert_{\mc{H}}^2\Big)\\
     &\quad\quad\quad\le \frac{1}{2c^2}\big\Vert I_0\mc{T} \big(h_0^\dag-h\big)\big\Vert^2_2+c_0\delta_n\big\Vert  I_0\mc{T}\big(\tilde h-h\big)\big\Vert_2+\lambda_{\mc{G}}B_1^2+c_0'\delta^2_n
\end{align*}
        where $c_0=9\cdot\max\{\Vert s\Vert_\infty,\:\Vert r\Vert_\infty\cdot k_1B,\: 2c^2b\}/(c^2 b)$ and $c'_0=c^2\cdot c_0(2_3c_0+\max\{2,2c_0/9\})$.

        Hence, identity~\eqref{eq:taylor_exp_L} implies that, with probability at least $\kappa(\delta_n)$, the following holds
        \begin{align*}
           \frac{1}{4c^2}\big\Vert I_0\mc{T}\big(\tilde h-h_{\lambda_\mc{H}}^\dag\big)\big\Vert_2^2&+\lambda_{\mc{H}}\big\Vert \tilde h- h_{\lambda_\mc{H}}^\dag\big\Vert_{\mc{H}}^2\\
           &\le   \frac{1}{2c^2}\big\Vert  I_0\mc{T}\big(h_0^\dag-h_{\lambda_\mc{H}}^\dag\big)\big\Vert_2^2+c_0\delta_n\big\Vert I_0\mc{T}\big(\tilde h-h_{\lambda_\mc{H}}^\dag\big)\big\Vert_2+\lambda_{\mc{G}}B_1^2+c'_0\delta_n^2.
        \end{align*}

        By the AM-GM inequality, $c_0\delta_n\Vert I_0\mc{T}(\tilde h-h_{\lambda_\mc{H}}^\dag)\Vert_2 \le 2c^2\cdot c_0^2 \delta_n^2 +\frac{1}{8c^2}\Vert I_0\mc{T}(\tilde h-h_{\lambda_\mc{H}}^\dag)\Vert_{2}^2$,
       which combined with the right-hand side of the last display yields that
        \begin{align}
             \lambda_{\mc{H}}\big\Vert \tilde h-h_{\lambda_\mc{H}}^\dag\big\Vert_{\mc{H}}^2+\frac{1}{8c^2}\big\Vert I_0\mc{T}\big(\tilde h-h_{\lambda_\mc{H}}^\dag\big)\big\Vert_2^2&\le  \frac{1}{2c^2}\big\Vert I_0\mc{T}\big(h_0^\dag-h_{\lambda_\mc{H}}^\dag\big)\big\Vert_2^2+\lambda_{\mc{G}}B_1^2+c_0''\delta_n^2,\label{useful_exp}
        \end{align}
        with $c_0''=c'_0+2c^2\cdot c_0^2$, happens with probability at least $\kappa(\delta_n)$, which, in turn, implies that
      $ 
        \lambda_{\mc{H}}\Vert \tilde h-h_{\lambda_\mc{H}}^\dag\Vert_{\mc{H}}^2\le  \frac{1}{2c^2}\Vert I_0\mc{T} (h_0^\dag-h_{\lambda_\mc{H}}^\dag)\Vert_2^2+\lambda_{\mc{G}}B_1^2+c_0''\delta_n^2
       $ also holds with probability at least $\kappa(\delta_n)$.
        
       Applying the inequality $\Vert h_1+h_2\Vert_{\mc{H}}^2\le 2\Vert h_1\Vert_{\mc{H}}^2+2\Vert h_2\Vert_{\mc{H}}^2$ we then obtain that the event
        \begin{align*}
            \big\Vert \tilde h-h_0^\dag\big\Vert_{\mc{H}}^2
            \le& \:2\big\Vert \tilde h-h_{\lambda_\mc{H}}^\dag\big\Vert_{\mc{H}}^2+2\big\Vert h_{\lambda_\mc{H}}^\dag-h_0^\dag\big\Vert_{\mc{H}}^2\\
            \le& \frac{1}{c^2\cdot\lambda_{\mc{H}}}\big\Vert I_0\mc{T}\big(h_0^\dag-h_{\lambda_\mc{H}}^\dag\big)\big\Vert_2^2+\frac{2\lambda_{\mc{G}}B_1^2}{\lambda_{\mc{H}}}+\frac{2c_0''\delta_n^2}{\lambda_{\mc{H}}}+2\big\Vert h_{\lambda_\mc{H}}^\dag-h_0^\dag\big\Vert_{\mc{H}}^2,
        \end{align*}
        happens with probability at least $\kappa(\delta_n)$.
       Because $\tilde h-h_0^\dag\in\mc{H}$, it follows that $ \Vert \tilde h-h_0^\dag\Vert_2^2\le k_1^2 \: \Vert \tilde h-h_0^\dag\Vert_{\mc{H}}^2$. Hence, invoking Theorem \ref{theo:biascontrol} to bound the terms $\Vert I_0\mc{T}(h_0^\dag-h_{\lambda_{\mc{H}}}^\dag)\Vert_2^2$ and $\Vert h_{\lambda_{\mc{H}}}^\dag-h_0^\dag\Vert_{\mc{H}}^2$, we conclude that, with probability at least $\kappa(\delta_n)$,
  $ \Vert \tilde h-h_0^\dag\Vert_2^2\le c_2\big(\frac{\delta_n^2}{\lambda_{\mc{H}}}+\frac{\lambda_{\mc{G}}}{\lambda_{\mc{H}}}+\lambda_{\mc{H}}^{\min\left\{\beta,1\right\}}\big),
    $ where $ c_2=k_1^2\cdot\max\{2c_1^*+c_2^*c^{-2}\:,\: 2B_1^2\:,\:2c_0''\}$, and $c_1^*$ and $c_2^*$ are the constants of Theorem~\ref{theo:biascontrol}.
    
   To derive the bound for $\Vert I_0\mc{T}(\tilde h-h_0^\dag)\Vert_2^2$, first note that from inequality~\eqref{useful_exp} we have that the event
\begin{align*}
             \frac{1}{8c^2}\big\Vert I_0\mc{T}\big(\tilde h-h_{\lambda_{\mc{H}}}^\dag\big)\big\Vert_2^2&\le  \frac{1}{2c^2}\big\Vert I_0\mc{T}\big(h_0^\dag-h_{\lambda_{\mc{H}}}^\dag\big)\big\Vert_2^2+\lambda_{\mc{G}}B_1^2+c_0''\delta_n^2
        \end{align*}
        happens with probability at least $\kappa(\delta_n)$. Combining this inequality with the inequality $ \Vert I_0\mc{T}(\tilde h-h_0^\dag)\Vert_2^2\le 2\Vert I_0\mc{T}(\tilde h-h_{\lambda_{\mc{H}}}^\dag)\Vert_2^2+2\Vert I_0\mc{T}(h_{\lambda_{\mc{H}}}^\dag-h_0^\dag)\Vert_2^2$  we conclude that
\begin{align*}
            \big\Vert I_0\mc{T}\big(\tilde h-h_0^\dag\big)\big\Vert_2^2\le 10\big\Vert I_0\mc{T}\big(h_0^\dag-h_{\lambda_{\mc{H}}}^\dag\big)\big\Vert_2^2+16c^2B_1^2\lambda_{\mc{G}}+16c^2c_0''\delta_n^2
         \end{align*}
         holds with probability at least $\kappa(\delta_n)$. Invoking again Theorem \ref{theo:biascontrol} to bound $\Vert I_0\mc{T}(h_0^\dag-h_{\lambda_{\mc{H}}}^\dag)\Vert_2^2$, we conclude that the event 
$ \Vert I_0\mc{T}(\tilde h-h_0^\dag)\Vert_2^2\le c_3\left(\delta_n^2+\lambda_{\mc{G}}+\lambda_{\mc{H}}^{\min\left\{\beta+1,2\right\}}\right) $ happens with probability at least $\kappa(\delta_n)$ 
    where $c_3=\max\{10 c_2^*\:,\:16c^2B_1^2\:,\:16c^2c_0''\}$. This completes the proof.
\end{proof}

\begin{proof}[Lemma~\ref{lemma:crit_radius_I0G}] Let $Q$ denote the conditional law of $O$ given $X\in\mc{X}_0$, where $O\sim P$. Then, for any $f\in\mc{L}^2(P)$, we have $\Vert I_0\cdot f\Vert_{\mc{L}^2(P)}^2=\mu_0\Vert f\Vert_{\mc{L}^2(Q)}^2$. Let $N := \sum_{i=1}^N I_0(X_i)$ and let $\mc{F}_0(\delta):=\{f\in\mc{F}:\Vert f\Vert_{\mc{L}^2(Q)}\le\delta/\sqrt{\mu_0}\}$. Then, letting $s=\delta/\sqrt{\mu_0}$, we have
\begin{align*}
   \mc{R}_n(I_0\cdot\mc{F},\:P,\:\delta) 
   &=\sum_{m=1}^n \E_{P,P_\varepsilon}\Big\{\sup_{f\in\mc{F}_0(\delta)}\frac{1}{n}\sum_{i=1}^n\varepsilon_iI_0(X_i)f(O_i)\Big\vert N=m\Big\}P(N=m)\\
     &=\sum_{m=1}^n \E_{P,P_\varepsilon}\Big\{\sup_{f\in\mc{F}_0(\delta)}\frac{1}{m}\sum_{i=1}^m\varepsilon_if(O_i)\Big\vert X_i\in\mc{X}_0,\forall \:i=1,\dots,m\Big\}\frac{m}{n}P(N=m)\\
     &=\sum_{m=1}^n \E_{Q,P_\varepsilon}\Big\{\sup_{f\in\mc{F}_0(\delta)}\frac{1}{m}\sum_{i=1}^m\varepsilon_if(O_i)\Big\}\frac{m}{n}P(N=m)\\
      &=\sum_{m=1}^n \mc{R}_m(\mc{F},Q,\:\delta/\sqrt{\mu_0})\:\frac{m}{n}\:P(N=m)=\E\left\{\mc{R}_N(\mc{F},Q,s)\frac{N}{n}\right\},
\end{align*}
where the second equality follows because the $O_1,\dots,O_n$ are i.i.d.

Let $\mc{D}_n$ be the event $\{N-n\mu_0<-n\mu_0/2\}$. By Hoeffding inequality we have $P(D_n)<\exp\{-2n^2\mu_0^2/(4n)\}=\exp(-dn)$ where $d:=\mu_0^2/2$. 
Then for $s:=\sup\{\delta_m: n\mu_0/2\le m\le n\}$
\begin{align*}
    \mc{R}_n(I_0\cdot\mc{F},P,\delta)&=\E\left\{\mc{R}_N(\mc{F},Q,s)\frac{N}{n}I_{\mc{D}_N}(N)\right\}+E\left\{\mc{R}_N(\mc{F},Q,s)\frac{N}{n}I_{\mc{D}_N^c}(N)\right\}\\
    &\le bP(\mc{D}_n)+\sup_{n\mu_0/2\le m\le n}\mc{R}_m(\mc{F},Q,s)\\
    &\le b\exp(-dn)+s^2.
\end{align*}
Let $\varepsilon$ be such that $1/\mu_0>1+\varepsilon^2$. Because by assumption $\delta_n^2=o(\exp\{-dn\})$, then there exists $n_0$ such that if $n\ge n_0$, $b\exp(-dn)\le\varepsilon^2s^2$. On the other hand, our choice of $\varepsilon$ implies that  $\mc{R}_n(I_0\cdot\mc{F},P,\delta)>\mc{R}_n(I_0\cdot\mc{F},P,s\sqrt{1+\varepsilon^2})$. Then, the last display implies that $\mc{R}_n(I_0\cdot\mc{F},P,s\sqrt{1+\varepsilon^2})<(1+\varepsilon^2)s^2$. Therefore, $\delta_n'\le s\sqrt{1+\varepsilon^2}=O(\alpha_{\lfloor n\mu_0/2\rfloor})$ by the assumed monotonicity of $\alpha_n$.

\end{proof}

\begin{proof}[Verifying Assumption~\ref{ass:bounded} for the Map $\widetilde{m}(o;h)$ from Example~\ref{example_mtp}]
    
We denote the support of random variables $(V_1,\dots,V_k)$ and their conditional versions $V_1|(V_2=v_2,\dots,V_k=v_k)$, for $k\ge2$, as $\text{supp}(V_1,\dots,V_k)$ and $\text{supp}(V_1|V_2=v_2,\dots,V_k=v_k)$.

 Observe that $q$, defined in Example~\ref{example_mtp}, is strictly monotone and differentiable a.e. with respect to $P_{0,A}$, and admits an inverse that is differentiable a.e. with positive derivative.

Suppose that $\tilde\rho_{P_0}(a,l,w) = I_0(a,l)\cdot\frac{dq^{-1}(a)}{da}\cdot\frac{p_{0,A|L,W}\{q^{-1}(a)\mid l,w\}}{p_{0,A|L,W}(a|l,w)}$, where $I_0(a,l):=I\{a\in[c+\delta,d]\}$, is bounded by some $B_0>0$. Then, for any $h\in\mathcal{L}^2(A,L,W)$,
\begin{align*}
    \Vert\widetilde{m}(O;h)\Vert_2^2&=\iint \left[\int h\{q(a),l,w\}^2p_{0,A|L,W}(a|l,w)da\right]dP_0(l,w)\\
    &\stackrel{(i)}{=}\iiint h(\tilde a,l,w)^2\cdot I\{\tilde a\in[c+\delta,d]\}\cdot \frac{dq^{-1}(\tilde a)}{d\tilde a}p_{0,A|L,W}(q^{-1}(\tilde a)|l,w)\:d\tilde a\cdot dP_{0}(l,w)\\
    &\stackrel{(ii)}{=}\iiint h(\tilde a,l,w)^2\cdot I_0(\tilde a,l)\cdot\tilde \rho_{P_0}(\tilde a,l,w)p_{0,A|L,W}(\tilde a|l,w)\:d\tilde a\cdot dP_{0}(l,w)\\
    &\stackrel{(iii)}{\le} B_0\iiint I_0(\tilde a,l)\cdot h(\tilde a,l,w)^2p_{0,A|L,W}(\tilde a|l,w)\:d\tilde a\cdot dP_0(l,w)=B_0\cdot \Vert I_0h\Vert_2^2,
\end{align*}
    where the equality in (i) follows from the change of variable $\tilde a = q(a)$, which is valid since $q$ has an a.e. differentiable inverse with strictly positive derivative and maps $[c,d]$ onto $[c+\delta,d]$. Equality (ii) follows directly from the definition of $\tilde\rho_{P_0}$, and the inequality (iii) uses the assumption that $|\tilde\rho_{P_0}(a,l,w)|\le B_0$ for all $(a,l,w)\in\text{supp}(A,L,W)$. This proves the first inequality in part 1 of Assumption~\ref{ass:bounded} with constant $B_2=\sqrt{B_0}$.

    To verify part 2 of Assumption~\ref{ass:bounded}, we impose the following \textit{common support} assumption: $ \text{supp}(A|L=l,U=u)=\text{supp}(A|L=l)$ for all $(l,u)\in\text{supp}(L,U)$, which is standard in proximal inference (see \cite{olivas2025chap3}). This assumption, together with the fact that $W$ is a negative control outcome-- i.e., $(A,Z)\perp W|L,U$ -- imply that $\text{supp}(A|L=l,W=w)=\text{supp}(A|L=l)$ for all $(l,w)\in\text{supp}(L,W)$. Moreover, $\text{supp}(A,L,W)=[c,d]\times\text{supp}(L,W)$ given that $\text{supp}(A|L=l)=[c,d]$ for all $l\in\mc{L}$.
    
    Next, we define an equivalence relation on $\mc{H}$ as follows: $ h\leftrightarrow h'$ if and only if $h(a,l,w)=h'(a,l,w)$ for all $ (a,l,w)\in[c+\delta, d]\times\text{supp}(L,W)$. Let $\widetilde{\mc{H}}:=\{[h]:h\in\mc{H}\}$ denote the set of equivalences classes in $\mc{H}$ under this relation. Note that $h\leftrightarrow h'$ iff $h-h'\in[0]:=\{h\in\mc{H}:h(a,l,w)=0\:\forall (a,l,w)\in[c+\delta,d]\times\text{supp}(L,W)\}$. In the special case of a Gaussian RKHS, the zero function is the only function that vanishes on a non-empty open subset (see Corollary 4.44 of \cite{steinwart2008support}); hence each equivalence class contains a single function, and $\widetilde{\mc{H}}$ coincides with $\mc{H}$. Define the map $\tilde d:\widetilde{\mc{H}}\times\widetilde{\mc{H}}\mapsto\mathbb{R}$ by
    \[
    \tilde d([h_1],[h_2]):=\inf\{\Vert h_1-h_2+v\Vert_\mc{H}\mid v\in[0]\}.
    \]
    Note that $\tilde d$ is well-defined because the set in the right hand side is invariant to the choice of representer $h_1\in[h_1]$ and $h_2\in[h_2]$.
    We now verify that $\tilde d$ is a metric on $\widetilde{\mc{H}}$. By construction, $\tilde d$ is symmetric. To see that the triangle inequality holds, given $\epsilon >0$ and $[h_1],[h_2],[h_3]\in\widetilde{\mc{H}}$, pick $v,v'\in[0]$ such that $\Vert h_1-h_2+v\Vert_\mc{H} < \tilde d([h_1],[h_2]) + \epsilon$ and $\Vert h_2-h_3+v'\Vert_\mc{H} < \tilde d([h_2],[h_3]) + \epsilon$. Then, $\Vert h_1-h_3+v+v'\Vert_\mc{H} \le \tilde d([h_1],[h_2])+d([h_2],[h_3])+ 2 \epsilon$. Because $v+v'\in[0]$, it follows that $\tilde d([h_1],[h_3]) \le \tilde d([h_1],[h_2])+\tilde d([h_2],[h_3]) + 2 \epsilon$. Since this holds for all $\epsilon >0$, the triangular inequality follows.

    Finally, we show that $\tilde d([h_1],[h_2])=0$ if and only if $[h_1]=[h_2]$. The if direction is trivial since $\tilde d([h],[h])=\inf\{\Vert v\Vert\mid v\in[0]\}=0$ for all $[h]\in\widetilde{\mc{H}}$. To show the only if direction suppose that $\tilde d([h_1],[h_2])=0$ but $[h_1]\neq[h_2]$. Then there exist representatives $h_1\in[h_1]$, $h_2\in[h_2]$, and a point $(a_0,l_0,w_0)\in[c+\delta,d]\times\text{supp}(L,W)$ such that $ h_1(a_0,l_0,w_0)\neq h_2(a_0,l_0,w_0)$. Let $\epsilon:=|h_1(a_0,l_0,w_0)- h_2(a_0,l_0,w_0)|>0$, and $ k:=[\sup_{(a,l,w)\in\text{supp}(A,L,W)}K_{\mc{H}}\{(a,l,w),(a,l,w)\}]^{1/2}<\infty$. By the definition of $\tilde d$ and the fact that $\tilde d([h_1],[h_2])=0$, there exists $v'\in[0]$ such that $\Vert h_1-h_2+v'\Vert_\mc{H}<\epsilon/k$. Let $h_1':=h_1+v'$. Then we arrive at the following contradiction:
 \begin{align*}
        k\Vert h_1'-h_2\Vert_\mc{H}<|h_1(a_0,l_0,w_0)-h_2(a_0,l_0,w_0)|=|h'_1(a_0,l_0,w_0)-h_2(a_0,l_0,w_0)|\le k\Vert h_1'-h_2\Vert_\mc{H},
    \end{align*}
 where the equality holds because $h_1\leftrightarrow h'_1$ and the final inequality because for any $h\in\mc{H}$, $\Vert h\Vert_\infty\le k\:\Vert h\Vert_\mc{H}$. Hence, $\tilde d([h_1],[h_2])=0$ implies $[h_1]=[h_2]$.

 We now show that $(\widetilde{\mc{H}},d)$ is separable. Since $K_\mc{H}$ is continuous and $\text{supp}(A,L,W)$ is compact, $\mc{H}$ is separable under the metric induced by its norm (see Proposition 11.7 of \cite{paulsen2016introduction}). Let $\{h_n\}\subset\mc{H}$ be a countable dense subset, and consider the set of equivalence classes $\{[h_n]\}$. We claim this set is dense in $\widetilde{\mc{H}}$. Indeed, for any $[h]\in\widetilde{\mc{H}}$ and $\epsilon>0$, there exists a representative $h'\in[h]$ and some $h_n\in\mc{H}$ such that $\Vert h'-h_n\Vert_\mc{H}<\epsilon$. Then $d([h],[h_n])\le\Vert h-h_n\Vert_\mc{H}<\epsilon$, so $\{[h_n]\}$ is dense in $\widetilde{\mc{H}}$, and thus $\widetilde{\mc{H}}$ is separable.

    Next, consider the class $\widetilde{m}\circ \mc{H}:=\{o\mapsto h(q(a),l,w)\}$, which is in a one-to-one correspondence with $\widetilde{\mc{H}}$. More precisely, the map $h\mapsto \widetilde{m}(\cdot,h)$ is well-defined on equivalence classes, since $ \widetilde{m}(o;h_1)=\widetilde{m}(o;h_2)$ for all $o\in\mc{O}$ iff $h_1(q(a),l,w)=h_1(q(a),l,w)$ for all $(a,l,w)\in[c,d]\times\text{supp}(L,W)$, $h_1(a,l,w)=h_2(a,l,w)$ for all $(a,l,w)\in[c+\delta,d]\times\text{supp}(L,W)$ (since $q
    \left([c,d]\right)=[c+\delta,d]$) iff  $h_1\leftrightarrow h_2$. Thus, we can equip $\widetilde{m}\circ\mc{G}$ with the metric $d$ defined as:
    \[
    d(f_1,f_2):=\tilde d([h_{f_1}],[h_{f_2}])\quad \text{ for any }f_1,f_2\in\widetilde{\mc{H}},
    \]
    where $[h_{f_1}]$ and $[h_{f_2}]$ are their corresponding equivalence classes in $\widetilde{\mc{H}}$. This
    yields $(\widetilde{m}\circ\mc{H},d)$ a separable metric space. By construction, the metric $d$ is controlled by the RKHS norm.

    Finally, we show that the metric $d$ dominates the sup-norm on $\widetilde{m}\circ\mc{H}$. For any $f_1,f_2\in \widetilde{m}\circ\mc{H}$, let $[h_{f_1}]$ and $[h_{f_2}]$ be their corresponding equivalence classes, and take any representatives $h_{f_1}\in [h_{f_1}]$ and $h_{f_2}\in [h_{f_2}]$. Then, for any $o\in\mc{O}$, we have
    \begin{align*}
        |f_1(o)-f_2(o)|:=|\widetilde{m}(o;h_{f_1})-\widetilde{m}(o;h_{f_2})|&=|h_{f_1}(q(a),l,w)-h_{f_2}(a,l,w)|\le k\Vert h_{f_1}-h_{f_2}\Vert_\mc{H}.
    \end{align*}
    Since the representatives were arbitrary, we conclude that $ \sup_{o\in\mc{O}} |f_1(o)-f_2(o)|\le k\cdot \tilde d([h_1],[h_2])=k\cdot d(f_1,f_2)$, implying that $d$ dominates the metric induced by the sup-norm with constant $B_3=k$. This completes the verification of part 2 of Assumption~\ref{ass:bounded}.
    \end{proof}

\begin{proof}[Theorem~\ref{theo:main}]
    The proof follows analogously to that of Theorem~\ref{theo:conv}, but applies Lemma~\ref{lemma:erm_mg} in place of Lemma~\ref{lemma:erm}, and uses the updated constants defined in Lemma~\ref{lemma:erm_mg}.
\end{proof}

\begin{proof}[Lemma~\ref{lemma:crit_radius_mG}] Let $P_0^q:=P_0\circ q^{-1}$ be the pushforward measure of $P_0$ under $q$. For any $g\in\mc{G}_{B_1}$, the composition $m(\:\cdot\:,g)=g\circ q(\cdot)$ satisfies
\[
\Vert m(\:\cdot\:,g)\Vert_{\mc{L}^2(P_0)}^2=\Vert g\circ q \Vert_{\mc{L}^2(P_0)}^2=\int_\mc{Z}g\{q(z)\}^2dP_0(z)=\int_{q(\mc{Z})}g(\tilde z)^2d(P_0\circ q^{-1})(\tilde z)=\Vert g\Vert_{\mc{L}^2(P_0^q)}^2,
\]
so the map $g\mapsto g\circ q=m(\:\cdot\:,g)$ is an isometry between $(\mc{G}_{B_1},\mc{L}^2(P_0^q))$ and $(m\circ\mc{G}_{B_1},\mc{L}^2(P_0))$.

Observe that the Rademacher complexity is preserved under reparameterization: if  $Z_1,\dots,Z_n\sim P_{0}$, $\tilde Z_1,\dots,\tilde Z_n\sim P_0^q$, and $\varepsilon_1,\dots,\varepsilon_n$ are i.i.d. Rademacher variables, then $\frac{1}{n}\sum_{i=1}^n\varepsilon_ig\{q(Z_i)\}^2=\frac{1}{n}\sum_{i=1}^n\varepsilon_ig(\tilde Z_i)^2$,
so for all $\delta>0$, $\mathcal{R}_n(m\circ \mathcal{G}_{B_1},\:P_0,\:\delta) = \mathcal{R}_n(\mathcal{G}_{B_1},\:P_0^q,\:\delta)$. Then the critical radius of $m\circ\mc{G}_{B_1}$ under $P_0$ equals the critical radius of $\mc{G}_{B_1}$ under $P_0^q$. The result follows because the critical radius of an RKHS class is preserved up to multiplicative constants under changes of measure. That $o \mapsto m(o, g) = g(q(z))$ is star shaped follows because $o \mapsto m(o, g) = g(q(z))$ is linear in $g$ and $\mc{G}_{B_1}$ is star-shaped.
\end{proof}

\begin{proof}[Ensuring Bounds in Theorems~\ref{theo:conv} and \ref{theo:main} are Achieved with High Probability]
    When $\delta_n<5b/6$, we show that $\log(\log(b/\delta_n))\le c_1n\delta_n^2/2$, where $c_1$ is the constant defined in Theorem~\ref{theo:conv}, for a suitable choice of $\delta_n$. Suppose $\delta_n^2\ge c_0\log(\log(n))/n$ with $c_0=\max\{b^2/\log(\log(3)),2/c_1\}$. Since $\delta_n/b<1$, it holds that $\delta_n/b\ge(\delta_n/b)^2\ge c_0\log(\log(n))/(nb^2)$. If $n\ge 3$, the assumption $c_0\ge b^2/\log(\log(3))$ implies $\delta_n/b\ge \log(\log(n))/\{n\log(\log(3))\}\ge1/n$ which then implies $ n\ge b/\delta_n$. Furthermore, since $c_0\ge 2/c_1$, we have $\log(\log(b/\delta_n))\le \log(\log(n))\le n\delta_n^2/c_0\le c_1n\delta_n^2/2$,
as desired.
\end{proof}

\begin{proof}[Deriving the adjoint of $T_\mc{G} \circ \mc{T} : \mc{H} \rightarrow \mc{G}$]
    For $h\in\mc{H}$ and $g\in\mc{G}$, applying Lemma~\ref{lemma:null_space}(g) with the integral operators induced by $K_\mc{H}$ and $K_\mc{G}$ yields $\left\langle T_\mc{G} \circ \mc{T} h,g \right\rangle_\mc{G}=\langle \mc{T}h,g\rangle_{\mc{L}^2(P_Z)}=\langle h,\mc{T}^*g\rangle_{\mc{L}^2(P_W)}=\left\langle h,T_\mc{H}\circ \mc{T}^*g\right\rangle_\mc{H}$.
Hence, the adjoint of $T_\mc{G} \circ \mc{T}$ is $T_\mc{H} \circ \mc{T}^*$.
\end{proof}

\section{Supporting Lemmas}\label{sec:proof_lemmas}

  The following Lemma establishes several important properties of the operator $T_{\mc{H}}^{1/2}$ that are invoked in the proofs of Lemmas~\ref{lemma:i0t_bounded} and~\ref{lemma:min_rkhs_norm_sol_h}, Theorem~\ref{theo:biascontrol}, and in the derivation of expression~\eqref{argmin_h}. Let $\mc{N}(T_\mc{H}):=\left\{h\in\mc{L}^2(P_{0,W}):T_{\mc{H}}h=0\right\}$ denote the null space of the operator $T_{\mc{{H}}}$, and let $\mc{N}(T_\mc{H})^\perp$ denote its orthogonal complement in $\mc{L}^2(P_{0,W})$. 

  \begin{lemma} Suppose Assumption \ref{ass:K_H} holds. The following holds:
        \begin{enumerate}
        \item[a)] $\mc{N}(T_{\mc{H}})=\mc{N}(T_{\mc{H}}^{1/2})$.
        \item[b)] For any $h\in \mc{L}^2(P_{0,W})$, it holds that $T_{\mc{H}}^{1/2}h\in\mc{H}$.
        \item[c)] The linear operator $T_{\mc{H}}^{1/2}:\mc{L}^2(P_{0,W})\rightarrow \mc{H}$ is bounded and satisfies $\Vert\mc{T}_{\mc{H}}^{1/2}h\Vert_{\mc{H}} \leq \Vert h\Vert_2$.
        \item[d)] The restriction of $T_{\mc{H}}^{1/2}$ to $\mc{N}(T_\mc{H})^\perp$ is an isometry, i.e. $\left\Vert  T_{\mc{H}}^{1/2} h\right\Vert_{\mc{H}}=\Vert h\Vert_2$ for all  $h\in\mc{N}(T_\mc{H})^\perp$.
        \item[e)] The operator $T_{\mc{H}}^{1/2}:\mc{N}(T_\mc{H})^\perp \rightarrow\mc{H}$ is bijective.
        \item[f)] The operator $T_{\mc{H}}^{1/2}:\mc{L}^2(P_{0,W}) \rightarrow  \mc{H}$, with its range endowed with the $\mc{L}^2(P_{0,W})$--norm, is compact.
         \item[g)] For any $h'\in\mc{L}^2(P_W)$ and $h\in\mc{H}$, it holds that $\langle T_\mc{H}h',h\rangle_\mc{H}=\langle h',h\rangle_{\mc{L}^2(P_W)}$.
        \end{enumerate}
        \label{lemma:null_space}
  \end{lemma}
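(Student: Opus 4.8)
The plan is to reduce every item to the spectral (Mercer) representation of $T_{\mc{H}}$. Under Assumption~\ref{ass:K_H}, $K_{\mc{H}}$ is continuous on the compact set $\mc{W}$, so $T_{\mc{H}}$ is a compact, self-adjoint, positive operator on $\mc{L}^2(P_W)$ (Theorem 2.34 of \cite{carrasco2007linear}), and Mercer's theorem supplies strictly positive eigenvalues $\mu_1\ge\mu_2\ge\cdots>0$ and $\mc{L}^2(P_W)$--orthonormal eigenfunctions $\{e_i\}$ with $T_{\mc{H}}h=\sum_i\mu_i\langle h,e_i\rangle_2 e_i$, such that $\{e_i\}$ is an orthonormal basis of $\mc{N}(T_{\mc{H}})^\perp$, $\{\sqrt{\mu_i}\,e_i\}$ is an orthonormal basis of $\mc{H}$, and hence $\mc{H}=\{f\in\mc{L}^2(P_W):\sum_i\langle f,e_i\rangle_2^2/\mu_i<\infty\}$ with $\|f\|_{\mc{H}}^2=\sum_i\langle f,e_i\rangle_2^2/\mu_i$. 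By Borel functional calculus, $T_{\mc{H}}^{1/2}h=\sum_i\sqrt{\mu_i}\,\langle h,e_i\rangle_2\,e_i$. I would first record (g): it follows either by expanding both sides in $\{e_i\}$ and using $\langle e_i,e_j\rangle_{\mc{H}}=\delta_{ij}/\mu_i$ (noting $h\in\mc{H}\subset\mc{N}(T_{\mc{H}})^\perp$), or, more conceptually, from the standard identification of $T_{\mc{H}}$ (viewed as a map into $\mc{H}$) with the adjoint of the continuous inclusion $\iota:\mc{H}\hookrightarrow\mc{L}^2(P_W)$, which is bounded because $\|f\|_\infty\le k_1\|f\|_{\mc{H}}$ under Assumption~\ref{ass:K_H}; this also re-derives the ``well known'' fact that $T_{\mc{H}}h'\in\mc{H}$.

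For (a) I would use the elementary fact that for any bounded positive self-adjoint operator $A$ one has $\mc{N}(A)=\mc{N}(A^{1/2})$: if $A^{1/2}h=0$ then $Ah=A^{1/2}(A^{1/2}h)=0$, while if $Ah=0$ then $\|A^{1/2}h\|_2^2=\langle Ah,h\rangle_2=0$. For (b) and (c), given $h\in\mc{L}^2(P_W)$, set $f:=T_{\mc{H}}^{1/2}h$, so $\langle f,e_i\rangle_2=\sqrt{\mu_i}\,\langle h,e_i\rangle_2$ and $\sum_i\langle f,e_i\rangle_2^2/\mu_i=\sum_i\langle h,e_i\rangle_2^2\le\|h\|_2^2<\infty$; the Mercer characterization then gives $f\in\mc{H}$ and $\|T_{\mc{H}}^{1/2}h\|_{\mc{H}}^2=\sum_i\langle h,e_i\rangle_2^2\le\|h\|_2^2$, hence boundedness with operator norm at most $1$. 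Statement (d) follows from the same identity: if $h\in\mc{N}(T_{\mc{H}})^\perp=\mc{N}(T_{\mc{H}}^{1/2})^\perp$ (using (a)), then $h=\sum_i\langle h,e_i\rangle_2 e_i$, so $\|h\|_2^2=\sum_i\langle h,e_i\rangle_2^2=\|T_{\mc{H}}^{1/2}h\|_{\mc{H}}^2$. For (e), injectivity on $\mc{N}(T_{\mc{H}})^\perp$ is immediate from (d); for surjectivity I would take $f\in\mc{H}$, expand $f=\sum_i b_i\sqrt{\mu_i}\,e_i$ with $\sum_i b_i^2=\|f\|_{\mc{H}}^2<\infty$, set $h:=\sum_i b_i e_i\in\mc{N}(T_{\mc{H}})^\perp$ (the series converging in $\mc{L}^2(P_W)$), and verify $T_{\mc{H}}^{1/2}h=f$. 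For (f), I would note $T_{\mc{H}}^{1/2}=\lim_n\sum_{i\le n}\sqrt{\mu_i}\langle\cdot,e_i\rangle_2 e_i$ in operator norm on $\mc{L}^2(P_W)$ since $\sqrt{\mu_i}\to0$, so $T_{\mc{H}}^{1/2}$ is a norm limit of finite-rank operators, hence compact as a map $\mc{L}^2(P_W)\to\mc{L}^2(P_W)$; since its range lies in $\mc{H}$ by (b), the image of the unit ball---being precompact in $(\mc{L}^2(P_W),\|\cdot\|_2)$---is precompact in $\mc{H}$ endowed with the $\mc{L}^2(P_W)$--norm, which is exactly the assertion (precompactness of a subset being intrinsic to it with its metric).

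The only genuinely delicate point is the foundational step: invoking Mercer's theorem to obtain the joint spectral picture of $T_{\mc{H}}$ and $\mc{H}$---in particular that $\{\sqrt{\mu_i}\,e_i\}$ is a \emph{complete} orthonormal system in $\mc{H}$ and that $\|\cdot\|_{\mc{H}}^2$ admits the stated series expression---and keeping careful track of the continuous embedding $\mc{H}\hookrightarrow\mc{L}^2(P_W)$ so that the two norms and the two families of expansions are compatible (in particular $\mc{H}\subset\mc{N}(T_{\mc{H}})^\perp$ inside $\mc{L}^2(P_W)$). Once that is in place, parts (a)--(g) are short series manipulations together with standard facts about compact positive operators. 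I would cite \cite{carrasco2007linear} (and \cite{steinwart2008support} or \cite{paulsen2016introduction}) for the Mercer representation rather than reproving it here.
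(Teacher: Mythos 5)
Your proposal is correct and follows essentially the same route as the paper: both rest on the Mercer/spectral decomposition of $T_{\mc{H}}$, the fact that the rescaled eigenfunctions form an orthonormal basis of $\mc{H}$ with the associated series expression for $\Vert\cdot\Vert_{\mc{H}}$, and then short coordinate computations for each of (a)--(g), with (f) obtained as a norm limit of finite-rank operators. The only (immaterial) difference is that for (a) you invoke the general identity $\Vert A^{1/2}h\Vert^2=\langle Ah,h\rangle$ for positive self-adjoint $A$, whereas the paper argues coordinate-wise in the eigenbasis.
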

  
\begin{proof}
As discussed in Section \ref{sec:conv_analysis}, Assumption \ref{ass:K_H} implies that the integral operator $T_\mc{H}:\mc{L}^2(P_{0,W})\mapsto\mc{L}^2(P_{0,W})$ is compact and positive. Therefore, by the spectral decomposition theorem (see, e.g., Theorem 4.3 of \cite{hohage2002lecture}) there exists eigenfunctions $(\varphi_j)_{j=1}^N$ corresponding to all the non-zero eigenvalues $ (\eta_j)_{j=1}^N$ of $T_{\mc{H}}$, where $N\in\mathbb{N}\cup\left\{+\infty\right\}$. Such eigenfunctions are orthonormal in $\mathcal{L}^2(P_{0,W})$ and satisfy that for any $h \in \mathcal{L}^2(P_{0,W})$, it holds that $T_{\mc{H}}h=\sum_{j=1}^N\eta_j\langle h,\varphi_j\rangle_2\varphi_j$. Because the integral operator $T_\mc{H}$ is positive, its non-zero eigenvalues $ (\eta_j)_{j=1}^N$ are positive. When $N=\infty$, $\eta_j \rightarrow 0$  as $j \rightarrow \infty$ and the sequence $ (\eta_j)_{j=1}^N$ is arranged in decreasing order. The square root of $T_{\mc{H}}$, denoted by $T_{\mc{H}}^{1/2}$, is defined by $T_{\mc{H}}^{1/2}h:=\sum_{j=1}^N \sqrt{\eta_j}\langle h,\varphi_j\rangle_2\varphi_j$.

Assertion (a) follows from the fact that for any $h\in\mc{L}^2(P_{0,W})$: $T_{\mc{H}}h=\sum_{j=1}^N\eta_j\langle h,\varphi_j\rangle_2\varphi_j=0$ iff $\langle h,\varphi_j\rangle_2=0\:\forall j=1,\dots,N$ iff $T_{\mc{H}}^{1/2}h=\sum_{j=1}^N\sqrt{\eta_j}\langle h,\varphi_j\rangle_2\varphi_j=0$.

Another result (see Theorem 11.18 of \cite{paulsen2016introduction}), establishes that under Assumption~\ref{ass:K_H}, the sequence $(\sqrt{\eta_j}\varphi_j)_{j=1}^N$ forms an orthogonal basis of the RKHS $\mc{H}$. Furthermore,  $\mc{H}$ admits the representation 
\begin{align}
 \mc{H}=\left\{h=\sum_{j=1}^N\gamma_j\varphi_j \: : \: \text{ for some } (\gamma_j)_{j=1}^{\infty}\text{ with }\sum_{j=1}^N\frac{\gamma_j^2}{\eta_j}<\infty\right\},\label{H_rep}
\end{align}
where if $N=\infty$, the limit of the finite sums is with respect to the $\mathcal{L}^2(P_{0,W})$-metric. 
Additionally, its inner product decomposes as (see Theorem 11.3 of \cite{paulsen2016introduction}) $\langle h,h'\rangle_{\mc{H}}=\sum_{j=1}^N\langle h,\varphi_j\rangle_2\langle h',\varphi_j\rangle_2/\eta_j$.

Assertion (b) follows from the representation of $\mc{H}$ in~\eqref{H_rep} and the definition of $T_{\mc{H}}^{1/2} h$. Specifically, for any $h \in \mc{L}^2(P_{0,W})$, we have $T_{\mc{H}}^{1/2}h=\sum_{j=1}^N\sqrt{\eta_j}\langle h,\varphi_j\rangle_2\varphi_j$, so to show that $T_{\mc{H}}^{1/2}h$ is in $\mc{H}$ we need to show that $\sum_{j=1}^N(\sqrt{\eta_j}\langle h,\varphi_j\rangle_2)^2/\eta_j < \infty$. This holds because
\begin{equation}
\sum_{j=1}^N\frac{\left(\sqrt{\eta_j}\langle h,\varphi_j\rangle_2\right)^2}{\eta_j}=\sum_{j=1}^N\langle h,\varphi_j\rangle_2^2\le \Vert h\Vert_2^2<\infty.\label{eq:th_norm}
\end{equation}
 Assertion (c) follows from~\eqref{eq:th_norm} since $\Vert T^{1/2}_\mc{H}h\Vert_\mc{H}^2 = \sum_{j=1}^N\left(\sqrt{\eta_j}\langle h,\varphi_j\rangle\right)^2/\eta_j$, thus proving that $T^{1/2}_\mc{H}$ is bounded with operator norm at most one. The linearity of $T^{1/2}_\mc{H}$ is immediate from the linearity of the inner products \(\langle h, \varphi_j \rangle_2\), completing the proof of (c).

To show Assertions (d) and (e), we use that $\mc{N}(T_\mc{H})^\perp=\text{span}(\varphi_1,\dots,\varphi_N)$. Assertion (d) follows because, for $h\in\text{span}(\varphi_1,\dots,\varphi_N)$, the first inequality in~\eqref{eq:th_norm} becomes an equality.

To prove Assertion (e), we first show that $T_{\mc{H}}^{1/2}:\mc{N}(T_\mc{H})^\perp \rightarrow \mc{H}$ is injective. Let $h_1,h_2\in\mc{N}(T_\mc{H})^\perp$ be such that $T_\mc{H}^{1/2}h_1=T_\mc{H}^{1/2}h_2$. Then, $h_1-h_2\in\mc{N}(T_\mc{H}^{1/2})$. By part (a), $\mc{N}(T_\mc{H}^{1/2}) = \mc{N}(T_\mc{H})$. Then, $h_1-h_2\in\mc{N}(T_\mc{H}) \cap \mc{N}(T_\mc{H})^\perp =\{0\}$. This shows injectivity. Surjectivity follows from the representation of $\mc{H}$ in (\ref{H_rep}). Specifically, any $h\in\mc{H}$ can be written as $h=\sum_{j=1}^N\gamma_j\varphi_j$ with $\sum_{j=1}^N\frac{\gamma_j^2}{\eta_j}<\infty$.
Then $T_\mc{H}^{1/2}h'=h$ where $h':=\sum_{j=1}^N\gamma_j\varphi_j/\sqrt{\eta_j}\in\mc{N}(T_\mc{H})^\perp$.

Assertion (f) follows from the definition of $\mc{T}_{\mc{H}}^{1/2}:\mc{L}^2(P_{0,W})\mapsto\mc{L}^2(P_{0,W})$, as it is the norm-limit of finite-rank operators (see Proposition 5.6.5 of \cite{conway2012course}).

To prove assertion (g), observe that for any $h'\in\mc{L}^2(P_W)$ and $h\in\mc{H}$, it holds that $ \langle T_\mc{H}h',h\rangle_\mc{H}=\sum_{j=1}^N\langle T_\mc{H}h',\varphi_j\rangle_2\langle h,\varphi_j\rangle_2/\eta_j=\sum_{j=1}^N\langle h',\varphi_j\rangle_2\langle h,\varphi_j\rangle_2=\langle h',\sum_{j=1}^N\langle h,\varphi_j\rangle_2\varphi_j\rangle_2$.
For every $h\in\mc{H}$, $h=\sum_{j=1}^N\langle h,\varphi_j\rangle_2\varphi_j$. Hence, the last expression equals $\langle h',h\rangle_2$.
\end{proof}
\begin{lemma}
    Suppose Assumption~\ref{ass:K_H} holds. Then the operators $I_0^{op}\circ\mc{T}\mid_\mc{H}:\mc{H} \rightarrow \mc{L}^2(P_{Z})$ and $I_0^{op}\circ\mc{T}\circ T_\mc{H}^{1/2}:\mc{L}^2(P_{W})\rightarrow\mc{L}^2(P_{Z})$ are bounded and linear, and their adjoints $(I_0^{op}\circ\mc{T}\mid_\mc{H})^*:\mc{L}^2(P_Z)\rightarrow\mc{H}$ and $(I_0^{op}\circ\mc{T}\circ T_\mc{H}^{1/2})^*:\mc{L}^2(P_Z)\rightarrow\mc{L}^2(P_W)$ are given by $T_\mc{H}\circ\mc{T}^*\circ I_0^{op}$ and $T_\mc{H}^{1/2}\circ\mc{T}^*\circ I_0^{op}$. \label{lemma:i0t_bounded}
\end{lemma}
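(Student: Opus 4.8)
The plan is to verify boundedness and linearity by chaining elementary operator bounds, and then to identify each adjoint by the standard ``move each factor across the inner product'' computation, using self-adjointness of the simple factors $I_0^{op}$ and $T_\mc{H}^{1/2}$ together with the $\mc{L}^2(P_W)$-to-$\mc{H}$ bridge recorded in Lemma~\ref{lemma:null_space}(g).

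First I would record the elementary facts: $\mc{T}:\mc{L}^2(P_W)\rightarrow\mc{L}^2(P_Z)$ is bounded with $\Vert\mc{T}h\Vert_2\le\Vert r\Vert_\infty\Vert h\Vert_2$ (conditional Jensen together with $\Vert r\Vert_\infty<\infty$); $I_0^{op}$, being multiplication by the indicator $I_0$, is a bounded orthogonal projection on $\mc{L}^2(P_Z)$, hence self-adjoint with operator norm at most one; $T_\mc{H}^{1/2}$ is a bounded self-adjoint self-map of $\mc{L}^2(P_W)$, being a power of the compact positive operator $T_\mc{H}$ (these properties follow from the spectral representation used to prove Lemma~\ref{lemma:null_space}); and under Assumption~\ref{ass:K_H} the inclusion $\mc{H}\hookrightarrow\mc{L}^2(P_W)$ satisfies $\Vert h\Vert_2\le\Vert h\Vert_\infty\le k_1\Vert h\Vert_\mc{H}$ with $k_1=\{\sup_{w\in\mc{W}}K_\mc{H}(w,w)\}^{1/2}<\infty$. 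Chaining these gives $\Vert I_0^{op}\mc{T}h\Vert_2\le\Vert r\Vert_\infty k_1\Vert h\Vert_\mc{H}$ for $h\in\mc{H}$ and, using $\Vert T_\mc{H}^{1/2}h\Vert_2\le k_1\Vert T_\mc{H}^{1/2}h\Vert_\mc{H}\le k_1\Vert h\Vert_2$ from Lemma~\ref{lemma:null_space}(c), also $\Vert I_0^{op}\mc{T}T_\mc{H}^{1/2}h\Vert_2\le\Vert r\Vert_\infty k_1\Vert h\Vert_2$ for $h\in\mc{L}^2(P_W)$; both maps are linear as compositions of linear maps. Since both are bounded operators between Hilbert spaces, their adjoints exist and are bounded, so it only remains to identify them.

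For the first adjoint, fix $h\in\mc{H}$ and $g\in\mc{L}^2(P_Z)$. Using self-adjointness of $I_0^{op}$ on $\mc{L}^2(P_Z)$ and the defining property of $\mc{T}^*$ (valid for every element of $\mc{L}^2(P_W)$, in particular for $h\in\mc{H}$),
\[
\langle I_0^{op}\mc{T}h,\,g\rangle_2=\langle\mc{T}h,\,I_0^{op}g\rangle_2=\langle h,\,\mc{T}^*I_0^{op}g\rangle_{\mc{L}^2(P_W)}.
\]
Since $\mc{T}^*I_0^{op}g\in\mc{L}^2(P_W)$ and $h\in\mc{H}$, Lemma~\ref{lemma:null_space}(g), together with symmetry of the real inner product, rewrites the right-hand side as $\langle h,\,T_\mc{H}\mc{T}^*I_0^{op}g\rangle_\mc{H}$; this is legitimate because $\mathrm{range}(T_\mc{H})\subseteq\mc{H}$. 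As this holds for all $h$ and $g$, $(I_0^{op}\circ\mc{T}\mid_\mc{H})^*=T_\mc{H}\circ\mc{T}^*\circ I_0^{op}$.

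For the second adjoint, fix $h\in\mc{L}^2(P_W)$ and $g\in\mc{L}^2(P_Z)$ and run the analogous computation,
\[
\langle I_0^{op}\mc{T}T_\mc{H}^{1/2}h,\,g\rangle_2=\langle\mc{T}T_\mc{H}^{1/2}h,\,I_0^{op}g\rangle_2=\langle T_\mc{H}^{1/2}h,\,\mc{T}^*I_0^{op}g\rangle_{\mc{L}^2(P_W)},
\]
but now move $T_\mc{H}^{1/2}$ onto the second argument using its self-adjointness \emph{on $\mc{L}^2(P_W)$} (in place of Lemma~\ref{lemma:null_space}(g)), obtaining $\langle h,\,T_\mc{H}^{1/2}\mc{T}^*I_0^{op}g\rangle_{\mc{L}^2(P_W)}$, whence $(I_0^{op}\circ\mc{T}\circ T_\mc{H}^{1/2})^*=T_\mc{H}^{1/2}\circ\mc{T}^*\circ I_0^{op}$. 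I do not expect a substantive obstacle; the only care required is bookkeeping — keeping track of which Hilbert space each inner product lives in, invoking the $\mc{L}^2(P_W)$-to-$\mc{H}$ bridge of Lemma~\ref{lemma:null_space}(g) at exactly the right point in the first computation while using plain self-adjointness of $T_\mc{H}^{1/2}$ on $\mc{L}^2(P_W)$ in the second, and checking that $T_\mc{H}\mc{T}^*I_0^{op}g$ indeed lies in $\mc{H}$ so the final RKHS inner product is well-defined.
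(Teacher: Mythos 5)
Your proof is correct and follows essentially the same route as the paper's: chain the elementary operator bounds for boundedness, then identify the adjoints by moving $I_0^{op}$ and $\mc{T}^*$ across the inner product and invoking Lemma~\ref{lemma:null_space}(g) (respectively, self-adjointness of $T_\mc{H}^{1/2}$ on $\mc{L}^2(P_W)$) at the final step. The extra bookkeeping you supply (explicit constants, the check that $T_\mc{H}\mc{T}^*I_0^{op}g\in\mc{H}$) is consistent with, and slightly more detailed than, the paper's argument.
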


\begin{proof}
    Linearity is immediate from that of $I_0^{op}$, $\mc{T}$, and $T_\mc{H}^{1/2}$. Boundedness of $I_0^{op}\circ\mc{T}\mid_\mc{H}$ follows because for $h \in \mc{H}$, it holds that $\Vert I_0^{op}\mc{T}h\Vert_2^2\le\Vert r\Vert_\infty^2\Vert h\Vert_2^2\le\Vert r\Vert_\infty^2\Vert h\Vert_\infty^2\le k_1^2\Vert r\Vert_\infty^2\Vert h\Vert_\mc{H}^2$,
   where $k_1^2=\sup_{w\in\mc{W}}K_\mc{H}(w,w)<\infty$. Boundedness of $I_0^{op}\circ\mc{T}\circ T_\mc{H}^{1/2}$ follows because $T_\mc H^{1/2}$(by Lemma~\ref{lemma:null_space}(f)), $\mc T$, and $I_0^{op}$ are all bounded, and so is their composition.

   To determine the adjoint of $I_0^{op}\circ\mc{T}\mid_\mc{H}$, let $h\in\mc{H}$ and $g\in\mc{L}^2(P_Z)$. Then, applying Lemma~\ref{lemma:null_space}(g) yields $\langle I_0^{op}\circ\mc{T}h,g\rangle_{\mc{L}^2(P_Z)}=\langle \mc{T}h,I_0^{op}g\rangle_{\mc{L}^2(P_Z)}=\left\langle h,\mc{T}^*\circ I_0^{op}g\right\rangle_{\mc{L}^2(P_W)}=\left\langle h,T_\mc{H}\circ\mc{T}^*\circ I_0^{op}g\right\rangle_\mc{H}$. Hence, $(I_0^{op}\circ\mc{T}\mid_\mc{H})^*=T_\mc{H}\circ\mc{T}^*\circ I_0^{op}$. Now let $h\in\mc{L}^2(P_W)$ and $g\in\mc{L}^2(P_Z)$. Since $T_\mc{H}^{1/2}$ is self-adjoint, we have $ \langle I_0^{op}\circ\mc{T}\circ T_\mc{H}^{1/2}h,g\rangle_{\mc{L}^2(P_Z)}=\langle T_\mc{H}^{1/2}h,\mc{T}^*\circ I_0^{op}g\rangle_{\mc{L}^2(P_W)}=\langle h,T_\mc{H}^{1/2}\circ\mc{T}^*\circ I_0^{op}g\rangle_{\mc{L}^2(P_W)}$. Thus $(I_0^{op}\circ\mc{T}\circ T_\mc{H}^{1/2})^*=T_\mc{H}^{1/2}\circ\mc{T}^*\circ I_0^{op}$.
\end{proof}

\begin{lemma}
The second equality in display~\eqref{argmin_h} holds.
    \label{lemma:h_lambda}
\end{lemma}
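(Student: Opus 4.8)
The plan is to exhibit the unconstrained minimizer over $\mc{H}$ of the Tikhonov functional appearing in~\eqref{argmin_h}, show that it automatically lies inside the ball $\mc{H}_B$, and then conclude that a minimizer over a subset containing the unconstrained minimizer must coincide with it --- which is exactly the second equality in~\eqref{argmin_h}.

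First I would set $\mc{A}:=I_0^{op}\circ\mc{T}\vert_\mc{H}:\mc{H}\to\mc{L}^2(P_Z)$ and $J(h):=\tfrac14\Vert\rho-\mc{A}h\Vert_2^2+\lambda_{\mc{H}}\Vert h\Vert_{\mc{H}}^2$. By Lemma~\ref{lemma:i0t_bounded}, $\mc{A}$ is bounded and linear; since $\mc{H}$ and $\mc{L}^2(P_Z)$ are Hilbert spaces and $\lambda_{\mc{H}}>0$, the functional $J$ has a unique minimizer over $\mc{H}$ --- this is the well-posedness of the Tikhonov problem emphasized in the discussion preceding Theorem~\ref{thm:source-general} (the factor $\tfrac14$ merely rescales the regularization parameter; equivalently, strict convexity of $J$ gives uniqueness, coercivity and lower semicontinuity give existence, and one may also read this off bounded invertibility of $\mc{A}^*\mc{A}+4\lambda_{\mc{H}}\,\mathrm{Id}$ in the normal equation). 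Call this minimizer $h_{\lambda_{\mc{H}},KRAS}^\dag$; it is the object appearing in the second line of~\eqref{argmin_h}.

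Next I would use $h_{0,KRAS}^\dag$ as a competitor. Under Assumptions~\ref{ass:relizability} and~\ref{ass:K_H} the minimal-$\mc{H}$-norm element $h_{0,KRAS}^\dag$ of $\mc{H}_0$ is well defined (as explained after~\eqref{def:h_0_dag}), and since $I_0 h_{0,KRAS}^\dag$ solves equation~\eqref{eq:int_eq_h} while $X$ is a component common to $W$ and $Z$ (so conditioning on $Z$ fixes $I_0(X)$), one has $\mc{A}h_{0,KRAS}^\dag=I_0^{op}\mc{T}h_{0,KRAS}^\dag=\mc{T}(I_0 h_{0,KRAS}^\dag)=\rho$ as elements of $\mc{L}^2(P_Z)$; hence $J(h_{0,KRAS}^\dag)=\lambda_{\mc{H}}\Vert h_{0,KRAS}^\dag\Vert_{\mc{H}}^2$. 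Optimality of $h_{\lambda_{\mc{H}},KRAS}^\dag$ then gives
\[
\lambda_{\mc{H}}\Vert h_{\lambda_{\mc{H}},KRAS}^\dag\Vert_{\mc{H}}^2\le J\big(h_{\lambda_{\mc{H}},KRAS}^\dag\big)\le J\big(h_{0,KRAS}^\dag\big)=\lambda_{\mc{H}}\Vert h_{0,KRAS}^\dag\Vert_{\mc{H}}^2,
\]
and dividing by $\lambda_{\mc{H}}>0$ yields $\Vert h_{\lambda_{\mc{H}},KRAS}^\dag\Vert_{\mc{H}}\le\Vert h_{0,KRAS}^\dag\Vert_{\mc{H}}<B$, using that $B$ was chosen with $\Vert h_{0,KRAS}^\dag\Vert_{\mc{H}}<B$; thus $h_{\lambda_{\mc{H}},KRAS}^\dag\in\mc{H}_B$.

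To finish, I would note that because $\mc{H}_B\subseteq\mc{H}$, every $h\in\mc{H}_B$ obeys $J(h)\ge J\big(h_{\lambda_{\mc{H}},KRAS}^\dag\big)$, with equality only at $h=h_{\lambda_{\mc{H}},KRAS}^\dag$ by uniqueness of the unconstrained minimizer; since this minimizer itself lies in $\mc{H}_B$, it is also the unique minimizer of $J$ over $\mc{H}_B$, i.e.\ the object in the first line of~\eqref{argmin_h}, which proves the second equality. I do not expect a genuine obstacle here: the only slightly delicate points are the standard Hilbert-space existence and uniqueness of the unconstrained Tikhonov minimizer (handed to us by Lemma~\ref{lemma:i0t_bounded}) and the bookkeeping identity $\mc{A}h_{0,KRAS}^\dag=\rho$ in the presence of the indicator $I_0$.
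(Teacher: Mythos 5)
Your proof is correct and follows essentially the same route as the paper's: both use $h_{0,KRAS}^\dag$ as a competitor whose data-fit term vanishes (since $I_0^{op}\mc{T}h_{0,KRAS}^\dag=\rho$), deduce $\Vert h_{\lambda_{\mc{H}},KRAS}^\dag\Vert_{\mc{H}}\le\Vert h_{0,KRAS}^\dag\Vert_{\mc{H}}<B$ from optimality, and conclude that the unconstrained minimizer lies in $\mc{H}_B$ and hence coincides with the constrained one. Your version just spells out the existence/uniqueness of the unconstrained Tikhonov minimizer a bit more explicitly than the paper does.
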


\begin{proof}
    To show the second equality in~\eqref{argmin_h} it suffices to show that $\Vert h^{\dag\dag}_{\lambda_\mc{H},KRAS}\Vert_\mc{H}\le B$ where $h^{\dag\dag}_{\lambda_\mc{H},KRAS}:=\arg\min_{h\in\mc{H}}\left\{\frac{1}{4}\left\Vert\rho- I_0^{op} \circ \mc{T}\vert_\mc{H} \:h\right\Vert_2^2+\lambda_{\mc{H}}\left\Vert h\right\Vert_{\mc{H}}^2\right\}$. This holds because $\Vert h^{\dag}_{0,KRAS}\Vert_\mc{H}<B$ by definition of $B$ and $\Vert h^{\dag\dag}_{\lambda_\mc{H},KRAS}\Vert_\mc{H}\le \Vert h^{\dag}_{0,KRAS}\Vert_\mc{H}$ because otherwise $ \frac{1}{4c^2}\Vert I_0\mc{T}(h_{0,KRAS}^\dag-h_{0,KRAS}^\dag)\Vert_2^2+\lambda_{\mc{H}}\big\Vert h_{0,KRAS}^\dag\big\Vert_{\mc{H}}^2<\frac{1}{4c^2}\Vert I_0\mc{T}(h_{0,KRAS}^\dag-h_{\lambda_\mc{H},KRAS}^{\dag\dag})\Vert_2^2+\lambda_{\mc{H}}\big\Vert h_{\lambda_\mc{H},KRAS}^{\dag\dag}\big\Vert_{\mc{H}}^2$ violating the definition of $h_{\lambda_\mc{H},KRAS}^{\dag\dag}$.
\end{proof}

\begin{lemma}
        Under Assumptions \ref{ass:relizability} and \ref{ass:K_H}, $h_{0,KRAS}^\dag=T_{\mc{H}}^{1/2}h'_{0,KRAS}$ and $h'_{0,KRAS}\in\mc{N}(T_\mc{H})^\perp$.\label{lemma:min_rkhs_norm_sol_h}
    \end{lemma}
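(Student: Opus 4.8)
The plan is to establish the two claims about $h_{0,KRAS}^\dag$, the minimal $\mc{H}$--norm solution of $I_0^{op}\circ\mc{T}\vert_\mc{H}\, h = \rho$, namely that it lies in the range of $T_\mc{H}^{1/2}$ applied to $\mc{N}(T_\mc{H})^\perp$, and that its preimage $h'_{0,KRAS}$ (the minimal $\mc{L}^2(P_W)$--norm solution of $\widetilde{\mc{T}} h' = \rho$) sits in $\mc{N}(T_\mc{H})^\perp$. The backbone of the argument is the factorization $I_0^{op}\circ\mc{T}\vert_\mc{H} = \widetilde{\mc{T}}\circ (T_\mc{H}^{1/2})^{-1}$ on $\mc{N}(T_\mc{H})^\perp$, together with the isometry property of $T_\mc{H}^{1/2}\vert_{\mc{N}(T_\mc{H})^\perp}$ from Lemma~\ref{lemma:null_space}(d)--(e).

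\begin{proof}
By Assumption~\ref{ass:relizability}, the set $\mc{H}_0 = \{h\in\mc{H}: I_0 h \text{ solves~\eqref{eq:int_eq_h}}\}$ is nonempty; equivalently, the affine set $S_\mc{H} := \{h\in\mc{H}: I_0^{op}\circ\mc{T}\vert_\mc{H}\, h = \rho\}$ is nonempty, and $h_{0,KRAS}^\dag$ is its element of minimal $\mc{H}$--norm, which is well-defined by Assumption~\ref{ass:K_H} as explained in the discussion following Assumption~\ref{ass:K_H}. Since by Lemma~\ref{lemma:null_space}(e) the restriction $T_\mc{H}^{1/2}:\mc{N}(T_\mc{H})^\perp\to\mc{H}$ is a bijection, we may write $h_{0,KRAS}^\dag = T_\mc{H}^{1/2} v$ for a unique $v\in\mc{N}(T_\mc{H})^\perp$. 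Recalling that $\widetilde{\mc{T}} = I_0^{op}\circ\mc{T}\circ T_\mc{H}^{1/2}$, the identity $I_0^{op}\circ\mc{T}\vert_\mc{H}\, h_{0,KRAS}^\dag = \rho$ becomes $\widetilde{\mc{T}} v = \rho$, so $v$ is a solution of equation~\eqref{eq:rkhs_int_eq_h}; in particular the solution set $S' := \{h'\in\mc{L}^2(P_W): \widetilde{\mc{T}} h' = \rho\}$ is nonempty and $v\in S'\cap\mc{N}(T_\mc{H})^\perp$.

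It remains to show $v = h'_{0,KRAS}$, i.e., that $v$ is the minimal $\mc{L}^2(P_W)$--norm element of $S'$. First, since $\widetilde{\mc{T}} = I_0^{op}\circ\mc{T}\circ T_\mc{H}^{1/2}$ and $T_\mc{H}^{1/2}$ annihilates $\mc{N}(T_\mc{H})$ by Lemma~\ref{lemma:null_space}(a), we have $\mc{N}(T_\mc{H})\subseteq\mc{N}(\widetilde{\mc{T}})$, so the minimal-norm solution of $\widetilde{\mc{T}} h' = \rho$ lies in $\mc{N}(T_\mc{H})^\perp$ if any solution in $\mc{N}(T_\mc{H})^\perp$ exists; since $v$ is such a solution and the minimal-norm solution is the orthogonal projection of any solution onto $\mc{N}(\widetilde{\mc{T}})^\perp\subseteq\mc{N}(T_\mc{H})^\perp$, we conclude $h'_{0,KRAS}\in\mc{N}(T_\mc{H})^\perp$. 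Now take any $w'\in S'$; then $T_\mc{H}^{1/2} w'\in\mc{H}$ by Lemma~\ref{lemma:null_space}(b) and $I_0^{op}\circ\mc{T}\vert_\mc{H}(T_\mc{H}^{1/2} w') = \widetilde{\mc{T}} w' = \rho$, so $T_\mc{H}^{1/2} w'\in S_\mc{H}$. By minimality of $h_{0,KRAS}^\dag$ in $S_\mc{H}$ and the isometry in Lemma~\ref{lemma:null_space}(d) applied to $v\in\mc{N}(T_\mc{H})^\perp$, together with the contraction bound $\Vert T_\mc{H}^{1/2} w'\Vert_\mc{H}\le\Vert w'\Vert_2$ from Lemma~\ref{lemma:null_space}(c),
\[
\Vert v\Vert_2 = \Vert T_\mc{H}^{1/2} v\Vert_\mc{H} = \Vert h_{0,KRAS}^\dag\Vert_\mc{H} \le \Vert T_\mc{H}^{1/2} w'\Vert_\mc{H} \le \Vert w'\Vert_2.
\]
Since $v\in S'\cap\mc{N}(T_\mc{H})^\perp$ and $w'\in S'$ was arbitrary, $v$ is the minimal $\mc{L}^2(P_W)$--norm element of $S'$; as this minimizer is unique and equals $h'_{0,KRAS}\in\mc{N}(T_\mc{H})^\perp$, we get $v = h'_{0,KRAS}$ and hence $h_{0,KRAS}^\dag = T_\mc{H}^{1/2} v = T_\mc{H}^{1/2} h'_{0,KRAS}$ with $h'_{0,KRAS}\in\mc{N}(T_\mc{H})^\perp$.
\end{proof}

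\textbf{The main obstacle} I anticipate is the careful bookkeeping around the two minimization problems: one must ensure that the correspondence $h'\mapsto T_\mc{H}^{1/2} h'$ carries solutions of $\widetilde{\mc{T}} h' = \rho$ into solutions of $I_0^{op}\circ\mc{T}\vert_\mc{H}\, h = \rho$ and vice versa, and that the norm inequality runs the right direction to pin down \emph{both} minimizers simultaneously — this needs the contraction $\Vert T_\mc{H}^{1/2} w'\Vert_\mc{H}\le\Vert w'\Vert_2$ in one place and the isometry $\Vert T_\mc{H}^{1/2} v\Vert_\mc{H} = \Vert v\Vert_2$ on $\mc{N}(T_\mc{H})^\perp$ in another, and failure to keep track of which vectors lie in $\mc{N}(T_\mc{H})^\perp$ would break the chain. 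The inclusion $\mc{N}(T_\mc{H})\subseteq\mc{N}(\widetilde{\mc{T}})$ and the resulting fact that the $\mc{L}^2$--minimal solution of~\eqref{eq:rkhs_int_eq_h} automatically lives in $\mc{N}(T_\mc{H})^\perp$ is the subtle point that makes the square commute.
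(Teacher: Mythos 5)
Your proof is correct and follows essentially the same route as the paper's: both use Lemma~\ref{lemma:null_space}(e) to write $h_{0,KRAS}^\dag=T_\mc{H}^{1/2}v$ for a unique $v\in\mc{N}(T_\mc{H})^\perp$, observe that $v$ then solves $\widetilde{\mc{T}}v=\rho$, and establish minimality by mapping an arbitrary solution $w'$ of~\eqref{eq:rkhs_int_eq_h} into $\mc{H}_0$ via $T_\mc{H}^{1/2}$ and comparing norms through the isometry on $\mc{N}(T_\mc{H})^\perp$. The only cosmetic difference is that you invoke the contraction $\Vert T_\mc{H}^{1/2}w'\Vert_\mc{H}\le\Vert w'\Vert_2$ from Lemma~\ref{lemma:null_space}(c) where the paper explicitly decomposes $w'=w'_{\mc{N}}+w'^{\perp}$ and applies Pythagoras, which amounts to the same estimate.
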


    \begin{proof} To simplify notation, we drop the subscripts $KRAS$ from $h_{0,KRAS}^\dag$ and $h'_{0,KRAS}$. Under Assumptions~\ref{ass:relizability}--\ref{ass:K_H}, $h_0^\dag$ in~\eqref{def:h_0_dag} is well-defined, and $\widetilde{\mc{T}}$ is bounded (see Section~\ref{sec:conv_analysis}). 

By Lemma~\ref{lemma:null_space} (e), there exists a unique $h_0'\in\mc{N}(T_\mc{H})^\perp$ such that $h_0^\dag=T_\mc{H}^{1/2}h_0'$. Since $\rho=\mc{T}\left(I_0h_0^\dag\right)=I_0^{op}\circ\mc{T}h_0^\dag=I_0^{op}\circ\mc{T}\circ T_\mc{H}^{1/2}h_0'=\widetilde{\mc{T}}h_0'$
it follows that $h_0'$ solves equation~\eqref{eq:rkhs_int_eq_h}. It remains to show that it is the solution of minimal $\mc{L}^2(P_{W})-$norm. Let $h_1\in\mc{L}^2(P_{W})$ be any solution to~\eqref{eq:rkhs_int_eq_h}. Decompose $h_1$ as $h_1'+h_1^\perp$, where $h_1'\in\mc{N}(T_\mc{H})$ and $h_1^\perp\in\mc{N}(T_\mc{H})^\perp$. Since $\widetilde{\mc{T}}h_1^\perp =\widetilde{\mc{T}}h_1$, it follows that $h_1^\perp$ also solves equation~\eqref{eq:rkhs_int_eq_h}. Moreover, $\rho=\widetilde{\mc{T}}h_1^\perp=I_0^{op}\circ\mc{T}\circ T_\mc{H}^{1/2}h_1^\perp =\mc{T}(I_0T_\mc{H}^{1/2}h_1^\perp)$, so $T_\mc{H}^{1/2}h_1^\perp\in\mc{H}_0$. By Lemma~\ref{lemma:null_space} (d), $T_\mc{H}^{1/2}$ restricted to $\mc{N}(T_\mc{H})^\perp$ is an isometry, so $\Vert h_1^\perp\Vert_2=\Vert T^{1/2}_\mc{H}h_1^\perp\Vert_\mc{H}\ge \Vert h_0^\dag\Vert_\mc{H}^2=\Vert h_0'\Vert_2^2$ with equality iff $T^{1/2}_{\mc{H}}h_1^\perp=h_0^\dag=T_\mc{H}^{1/2}h_0'$, which by Lemma~\ref{lemma:null_space} (e) implies $h_1^\perp=h_0'$. Therefore, $\Vert h_1\Vert_2^2=\Vert h_1'\Vert_2^2+\Vert h_1^\perp\Vert_2^2\ge \Vert h_0'\Vert_2^2$, with equality, i.e., $\Vert h_1\Vert_2=\Vert h_0'\Vert_2$, if and only if $h_1'=0$ and $h_1^\perp=h_0'$. This shows that $h_0'$ is the unique solution to equation~\eqref{eq:rkhs_int_eq_h} of minimal $\mc{L}^2(P_{W})$-norm.
\end{proof}

Next we will present several results that are used in the proofs of Theorems~\ref{theo:conv} and \ref{theo:main}.

In the following Lemmas \ref{lemma:talagrand}--\ref{lemma:lemma14foster}, $O_1,\dots,O_n$ are $n$ independent identically distributed copies of a random variable $O$ on the sample space $\mc{O}$. 
Furthermore, $\mc{F}$ is an index set such that for each $f \in \mc{F}$,  $t_f:\mc{O} \mapsto \mathbb{R}$ is a measurable, real-valued function satisfying $\mathbb{E}[t_f(O)^2]<\infty$. 

\begin{lemma}[Talagrand inequality, Theorem 3.27 of \cite{wainwright2019high}]
    Suppose $\mc{F}$ is countable and $\left\{t_f:f\in\mc{F}\right\}$ is uniformly bounded by $b$. Define $Z:=\sup_{f\in\mc{F}}\left\{\frac{1}{n}\sum_{i=1}^nt_f(O_i)\right\}$. Then, for all $u>0$, $\text{P}\left\{Z\ge\mathbb{E}(Z)+u\right\}\le2\exp\Big(-\frac{nu^2}{16eb\mathbb{E}(Z)+8e\sup_{f\in\mc{F}}\mathbb{E}\left\{t_f(O)^2\right\}+4bu}\Big)$
    where $e:=exp(1)$.\label{lemma:talagrand}
\end{lemma}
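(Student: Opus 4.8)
The plan is to obtain this statement by invoking Talagrand's concentration inequality for suprema of empirical processes in the explicit form recorded as Theorem~3.27 of \citet{wainwright2019high}; the particular numerical constants $16e$, $8e$, $4b$ are precisely those appearing there, so in the paper the cleanest route is a direct citation. For orientation I would reproduce the structure of the entropy-method proof that underlies it.

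Write $S := nZ = \sup_{f\in\mc{F}}\sum_{i=1}^{n} t_f(O_i)$, a measurable function $S=G(O_1,\dots,O_n)$ of $n$ independent coordinates, where countability of $\mc{F}$ guarantees measurability. Two structural facts drive the bound. First, since $\Vert t_f\Vert_\infty\le b$ for every $f\in\mc{F}$, replacing $O_i$ by an independent copy alters $S$ by at most $2b$, so $G$ has bounded coordinate increments. Second, the conditional fluctuation of $S$ in coordinate $i$ is governed jointly by the weak variance $\sigma^2:=\sup_{f\in\mc{F}}\mathbb{E}\{t_f(O)^2\}$ and by $\mathbb{E}(S)$: letting $S^{(i)}$ denote $S$ with its $i$-th summand deleted and $f^\star$ a near-maximizer of the full sum, one has $(S-S^{(i)})_+\le |t_{f^\star}(O_i)|$, and hence, after the standard symmetrization step that reduces the weak variance to the strong variance, $\sum_{i=1}^n\mathbb{E}\{(S-S^{(i)})_+^2\}\lesssim n\sigma^2 + b\,\mathbb{E}(S)$.

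The core step is the Herbst argument. For $\lambda>0$ set $\psi(\lambda):=\log\mathbb{E}\{e^{\lambda(S-\mathbb{E}S)}\}$ and apply the sub-additivity (tensorization) of entropy,
\[
\mathrm{Ent}\!\left(e^{\lambda S}\right)\;\le\;\sum_{i=1}^{n}\mathbb{E}\!\left\{\mathrm{Ent}_i\!\left(e^{\lambda S}\right)\right\},
\]
bounding each conditional term through a modified logarithmic Sobolev inequality valid for functions with bounded coordinate increments. Combined with the self-bounding estimate above, this yields a differential inequality for $\psi$ of sub-gamma type; integrating it produces a Bernstein-type tail $\text{P}(S\ge\mathbb{E}S+t)\le\exp\{-t^2/(2(v+ct))\}$ with variance proxy $v$ of order $n\sigma^2 + b\,\mathbb{E}(S)$ and $c$ of order $b$. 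Setting $t=nu$ and dividing numerator and denominator inside the exponent by $n$ converts this into the stated inequality once the explicit constants generated along this route are tracked.

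The main obstacle is purely a matter of bookkeeping rather than ideas: one must carry the modified log-Sobolev step and the integration of the resulting differential inequality through so that the constants collapse to $16e$, $8e$, $4b$ rather than to some other admissible triple. Since a fully worked proof is available, the honest and economical choice is to cite \citet[Theorem~3.27]{wainwright2019high}, using the sketch above only to indicate the mechanism.
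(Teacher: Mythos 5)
Your proposal matches the paper's treatment: the result is imported verbatim as Theorem~3.27 of \cite{wainwright2019high}, and the paper supplies no independent proof, exactly as you recommend. Your sketch of the entropy-method/Herbst mechanism behind the cited theorem is accurate but, as you note yourself, the citation alone is what carries the argument here.
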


Lemma ~\ref{lemma:talagrand} assumes that $\mc{F}$ is countable. However, the proofs of Theorems \ref{theo:conv} and \ref{theo:main} require applying the lemma to  uncountable sets $\mc{F}$. This extension is valid when $\left\{t_f:f\in\mc{F}\right\}$ is a Caratheodory set, i.e. when $\mc{F}$ can be equipped with a metric under which it is separable and the evaluation map $f \mapsto t_f(o)$ is continuous for each $o \in \mc{O}$, a condition satisfied by the function classes we consider. For completeness, we now state and prove some results that imply that the Lemma \ref{lemma:talagrand} remains valid under this more general setting.

To facilitate the extension of Lemma \ref{lemma:talagrand} to certain uncountable uniformly bounded function classes $\left\{t_f:f\in\mc{F}\right\}$ used in the proof of Theorem \ref{theo:conv}, in the next Lemma we adapt a result from \cite{steinwart2008support}.

\begin{lemma}
Suppose that:
\begin{enumerate}
    \item $\mc{F}$ is a separable metric space with metric $d$, and let $\mc{S}\subset\mc{F}$ be a countable dense set,
    \item For each $f\in\mc{F}$, $t_f:\mc{O}\mapsto\mathbb{R}$ is a measurable function,  where $\mc{O}$ is the sample space of a random vector $O$,
    \item For each $o\in\mc{O}$, the map $f\mapsto t_f(o)$ is continuous with respect to $d$ at all $f \in \mc{F}$.
\end{enumerate}
Then for all $o_1,\dots,o_n\in\mc{O}$, $\sup_{f\in\mc{F}}\left\{\frac{1}{n}\sum_{i=1}^nt_f(o_i) \right \} = \sup_{f\in\mc{S}}\left\{\frac{1}{n}\sum_{i=1}^nt_f(o_i)\right\}$ and, consequently, for $O_1,\cdots,O_n$ i.i.d. copies of $O$, $\sup_{f\in\mc{F}}\left\{\frac{1}{n}\sum_{i=1}^nt_f(O_i)\right\}$ is a well-defined random variable.
    \label{lemma:countable}
\end{lemma}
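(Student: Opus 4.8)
The plan is to fix the data points $o_1,\dots,o_n\in\mc{O}$ once and for all and first establish a purely deterministic statement: the real-valued map $\Phi(f):=\frac{1}{n}\sum_{i=1}^n t_f(o_i)$ is continuous on the metric space $(\mc{F},d)$. This is immediate from hypothesis~3 together with the fact that a finite linear combination of continuous maps on a metric space is continuous. Once continuity of $\Phi$ is in hand, the identity $\sup_{f\in\mc{F}}\Phi(f)=\sup_{f\in\mc{S}}\Phi(f)$ follows from the density of $\mc{S}$ in $\mc{F}$ by the standard approximation argument: the inequality ``$\ge$'' is trivial because $\mc{S}\subset\mc{F}$; for ``$\le$'', given an arbitrary $f\in\mc{F}$ and $\varepsilon>0$, continuity of $\Phi$ at $f$ yields a $d$--ball around $f$ on which $\Phi>\Phi(f)-\varepsilon$, density of $\mc{S}$ produces some $s\in\mc{S}$ inside that ball, and hence $\sup_{f'\in\mc{S}}\Phi(f')\ge\Phi(s)>\Phi(f)-\varepsilon$; taking the supremum over $f\in\mc{F}$ and then letting $\varepsilon\downarrow0$ gives $\sup_{f\in\mc{S}}\Phi\ge\sup_{f\in\mc{F}}\Phi$.

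For the final assertion I would then apply the deterministic identity just proved pointwise in $\omega$, substituting $o_i=O_i(\omega)$. By hypothesis~2 each $t_f$ is measurable, so for fixed $f$ the map $\omega\mapsto\frac{1}{n}\sum_{i=1}^n t_f(O_i(\omega))$ is measurable, being a finite sum of compositions of measurable maps with the measurable random variables $O_i$. Consequently $\omega\mapsto\sup_{f\in\mc{S}}\frac{1}{n}\sum_{i=1}^n t_f(O_i(\omega))$ is a \emph{countable} supremum of measurable functions, hence measurable (as an extended-real-valued function). By the first part this countable supremum agrees at every $\omega$ with $\sup_{f\in\mc{F}}\frac{1}{n}\sum_{i=1}^n t_f(O_i(\omega))$, so the latter is measurable, i.e.\ a well-defined random variable.

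There is no substantive obstacle here; the argument simply adapts the reasoning behind the analogous statement in \cite{steinwart2008support}. The one point requiring minor care is the ordering of the reduction: the passage from an uncountable to a countable supremum must be carried out on the level of the deterministic identity---valid for arbitrary fixed $o_1,\dots,o_n$---\emph{before} the random variables are substituted, so that the measurability question is only ever posed for a countable supremum and no measurability issue for an uncountable family arises. I would also emphasize that no boundedness of $\{t_f:f\in\mc{F}\}$ is needed for this lemma; separability of $\mc{F}$ plus separate continuity of $f\mapsto t_f(o)$ suffice, which is precisely what makes Lemma~\ref{lemma:talagrand} applicable to the uncountable (but Caratheodory) function classes encountered in the proofs of Theorems~\ref{theo:conv} and~\ref{theo:main}.
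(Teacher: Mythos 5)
Your proposal is correct and follows essentially the same route as the paper: first prove the deterministic identity $\sup_{\mc{F}}=\sup_{\mc{S}}$ for fixed $o_1,\dots,o_n$ using continuity of $f\mapsto\frac{1}{n}\sum_i t_f(o_i)$ together with density of $\mc{S}$, and then obtain measurability by reducing to a countable supremum of measurable functions. The only (harmless) difference is that you argue the approximation step directly rather than by contradiction, and your measurability step dispenses with the joint-measurability lemma (Lemma A.3.17 of the Steinwart--Christmann reference) that the paper invokes, which is indeed unnecessary once the supremum has been reduced to the countable set $\mc{S}$.
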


\begin{proof} Fix $\underset{\sim}{o}:=(o_1,\dots, o_n)\in\mc{O}^n$ and define $g(\underset{\sim}{o};f):=\frac{1}{n}\sum_{i=1}^nt_f(o_i)$, $A_1(\underset{\sim}{o}):=\sup_{f\in \mc{F}} g(\underset{\sim}{o};f)$, and $A_2(\underset{\sim}{o}):=\sup_{f\in \mc{S}}g(\underset{\sim}{o};f)$. We will show that $A_1(\underset{\sim}{o})=A_2(\underset{\sim}{o})$.  

Clearly $A_2(\underset{\sim}{o})\le A_1(\underset{\sim}{o})$ because $\mc{S}\subset \mc{F}$. Suppose that $A_2(\underset{\sim}{o})<A_1(\underset{\sim}{o})$ and set $\varepsilon:=A_1(\underset{\sim}{o})-A_2(\underset{\sim}{o})>0$. By the definition of the supremum, there exists $f_\varepsilon\in\mc{F}$ such that $A_1(\underset{\sim}{o})-\frac{\varepsilon}{2}< g(\underset{\sim}{o};f_{\varepsilon})$.
Now, the map $f\mapsto g(\underset{\sim}{o};f)$ is continuous because the map $f\mapsto t_f(o_i)$ is continuous for each $i$. Then, there exists $\delta>0$ such that $d(f,f_\varepsilon)<\delta$ implies $|g(\underset{\sim}{o};f)-g(\underset{\sim}{o};f_{\varepsilon})|<\frac{\varepsilon}{2}$. Since $\mc{S}$ is dense in $\mc{F}$, there exits $f_s\in \mc{S}$ such that $d(f_s,f_\varepsilon)<\delta$. Then, $g(\underset{\sim}{o};f_s)>g(\underset{\sim}{o};f_{\varepsilon})-\frac{\varepsilon}{2}>A_1(\underset{\sim}{o})-\varepsilon=A_2(\underset{\sim}{o})$, which contradicts the definition of $A_2(\underset{\sim}{o})$. We thus conclude that $A_1(\underset{\sim}{o})=A_2(\underset{\sim}{o})$.

Next, since $g(\cdot;f):\mc{O}^n\mapsto\mathbb{R}$ is measurable for all $f\in \mc{S}$, and the map $f\mapsto g(\underset{\sim}{o};f)$ is continuous for all $\underset{\sim}{o}\in\mc{O}^n$, Lemma A.3.17 of \cite{steinwart2008support} ensures that the mapping $(\underset{\sim}{o},f)\mapsto g(\underset{\sim}{o};f)$ is measurable on $\mc{O}^n\times\mc{S}$. Then, the map $\underset{\sim}{o}\mapsto\sup_{f\in\mc{S}}g(\underset{\sim}{o};f)$ is also measurable because $\mc{S}$ is countable. Hence, $\sup_{f\in\mc{S}}g(O_1,\dots,O_n;f)= \sup_{f\in\mc{F}}g(O_1,\dots,O_n;f)$
is a well-defined random variable. This completes the proof.
\end{proof}

\begin{corollary}
Let $\mc{O}_0\subset\mc{O}$ be a measurable set. Suppose that:
\begin{enumerate}
    \item $\mc{F}$ is a separable metric space with metric denoted by $d$ such that $\mc{F}\subset\mc{L}^2(P_O)$, and let $\mc{S}$ denote a countable dense subset of it,
    \item For each $f\in\mc{L}^2(P_O)$, $t_f:\mc{O}\mapsto\mathbb{R}$ is a measurable function,  where $\mc{O}$ is the sample space of a random vector $O$,
    \item For each $o\in\mc{O}$, the map $f\mapsto t_{I_{\mc{O}_0}\cdot f}(o)$ is continuous with respect to $d$ at all $f \in \mc{F}$.
    \item The map $f\mapsto t_{I_{\mc{O}_0}\cdot f}$ is continuous from $(\mc{F},d)$ into $\mc{L}^1(P_O)$.
\end{enumerate}
For given fixed $\beta\in\mathbb{R}$ and any $o\in\mc{O}$, define the map $f\in\mc{F}\mapsto h_f(o):= t_{I_{\mc{O}_0}\cdot f}(o)-\beta\cdot\E\{t_{I_{\mc{O}_0}\cdot f}(O)\}$. Given $\mc{F}'\subset\mc{F}$, define $S':=\mc{F}'\cap\mc{S}$. Then,
\begin{enumerate}
    \item[i)] $\sup_{f\in \mc{F}'}\left\vert \frac{1}{n}\sum_{i=1}^nh_f(O_i) \right\vert
= \sup_{f \in \mc{S}'}\left\vert \frac{1}{n}\sum_{i=1}^nh_f(O_i) \right\vert$. In particular, the left-hand side is a well-defined random variable.
\item[ii)] When $\beta=0$, the conclusion in i) holds even if assumption 4 fails.
\end{enumerate}
 \label{corollary:countable_Lf}
\end{corollary}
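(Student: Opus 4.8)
The plan is to reduce the statement to Lemma~\ref{lemma:countable}, applied with the index set $\mc{F}'$ in place of $\mc{F}$ and with the functions $o\mapsto h_f(o)$ in place of $t_f$. Since $\mc{F}'$ is a subset of the separable metric space $(\mc{F},d)$ it is again separable. First I would clear the measurability bookkeeping. For each $f\in\mc{F}$ the function $I_{\mc{O}_0}\cdot f$ again lies in $\mc{L}^2(P_O)$ because $\Vert I_{\mc{O}_0}\cdot f\Vert_2\le\Vert f\Vert_2$, so hypothesis~2 makes $o\mapsto t_{I_{\mc{O}_0}\cdot f}(o)$ measurable; hypothesis~4 further forces $t_{I_{\mc{O}_0}\cdot f}\in\mc{L}^1(P_O)$, so $\E\{t_{I_{\mc{O}_0}\cdot f}(O)\}$ is finite, $o\mapsto h_f(o)$ is a well-defined measurable function, and $(o_1,\dots,o_n)\mapsto\frac1n\sum_{i=1}^n h_f(o_i)$ is measurable for every $f$. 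Next I would check that for each fixed $o\in\mc{O}$ the map $f\mapsto h_f(o)$ is continuous on $(\mc{F},d)$: hypothesis~3 handles the term $t_{I_{\mc{O}_0}\cdot f}(o)$, while the bound $\bigl|\E\{t_{I_{\mc{O}_0}\cdot f}(O)\}-\E\{t_{I_{\mc{O}_0}\cdot f'}(O)\}\bigr|\le\Vert t_{I_{\mc{O}_0}\cdot f}-t_{I_{\mc{O}_0}\cdot f'}\Vert_{\mc{L}^1(P_O)}$ together with hypothesis~4 gives continuity of $f\mapsto\E\{t_{I_{\mc{O}_0}\cdot f}(O)\}$. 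Hence $f\mapsto\frac1n\sum_{i=1}^n h_f(o_i)$ is continuous for every $(o_1,\dots,o_n)$.

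To handle the absolute value I would write $\sup_{f\in\mc{F}'}|a_f|=\max\{\sup_{f\in\mc{F}'}a_f,\ \sup_{f\in\mc{F}'}(-a_f)\}$ with $a_f:=\frac1n\sum_{i=1}^n h_f(o_i)$, and apply Lemma~\ref{lemma:countable} separately to the two classes $\{o\mapsto h_f(o):f\in\mc{F}'\}$ and $\{o\mapsto -h_f(o):f\in\mc{F}'\}$, both of which inherit separability of the index set, measurability in $o$, and pointwise-in-$o$ continuity in $f$ from the previous paragraph, with the countable set $\mc{S}'=\mc{F}'\cap\mc{S}$ playing the role of the dense subset. This yields $\sup_{f\in\mc{F}'}a_f=\sup_{f\in\mc{S}'}a_f$ and the analogue with $-a_f$; taking the maximum gives part~(i), i.e. $\sup_{f\in\mc{F}'}|\frac1n\sum_i h_f(O_i)|=\sup_{f\in\mc{S}'}|\frac1n\sum_i h_f(O_i)|$, and the right-hand side, being a countable supremum of measurable functions of $(O_1,\dots,O_n)$, is a genuine random variable. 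For part~(ii), the case $\beta=0$ gives $h_f=t_{I_{\mc{O}_0}\cdot f}$, the mean term vanishes, hypothesis~4 is never invoked, and the same argument goes through using hypotheses~1--3 alone.

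The step I expect to be the main obstacle is the density of $\mc{S}'=\mc{F}'\cap\mc{S}$ in $\mc{F}'$, which is precisely what licenses invoking Lemma~\ref{lemma:countable} with $\mc{S}'$ instead of with an abstract countable dense subset of $\mc{F}'$. This is not automatic for an arbitrary $\mc{F}'\subseteq\mc{F}$, since a dense subset of a metric space need not meet every subset densely. It does hold, however, for the classes $\mc{F}'$ arising in the proofs of Theorems~\ref{theo:conv} and~\ref{theo:main} --- intersections of a star-shaped-about-$0$ RKHS ball with an $\mc{L}^2$-ball --- via the scaling argument $tf_0\to f_0$ as $t\uparrow1$: since $\Vert tf_0\Vert_2<\Vert f_0\Vert_2$ the approximants stay strictly inside both the $\mc{L}^2$-ball and $\mc{F}$, so points of $\mc{S}$ near $tf_0$ lie in $\mc{S}'$ and cluster at $f_0$. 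Alternatively, one may replace $\mc{S}'$ throughout by any fixed countable dense subset of $\mc{F}'$, which exists by separability, with no other change to the argument.
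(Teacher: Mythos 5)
Your proof is correct and follows essentially the same route as the paper's: establish pointwise-in-$o$ continuity of $f\mapsto h_f(o)$ from hypotheses 3 and 4 (the latter only needed for the mean term, hence dispensable when $\beta=0$), then apply Lemma~\ref{lemma:countable} to the classes indexed by $h_f$ and $-h_f$ and take the maximum to handle the absolute value. The density issue you flag is genuine --- the paper's proof merely asserts that $\mc{S}'=\mc{F}'\cap\mc{S}$ is dense in $\mc{F}'$, which fails for arbitrary $\mc{F}'\subset\mc{F}$ --- and your two remedies (verifying density directly for the star-shaped classes actually used, or replacing $\mc{S}'$ by any countable dense subset of the separable subspace $\mc{F}'$) both repair it.
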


\begin{proof} Fix $o\in\mc{O}$. First, we show that the map $f\mapsto h_f(o)$ is continuous with respect to the metric $d$. Take $f_1,f_2\in\mc{F}$. By triangle inequality, we have $ \vert h_{f_1}(o)-h_{f_2}(o)\vert\le \vert t_{I_{\mc{O}_0}\cdot f_1}(o)-t_{I_{\mc{O}_0}\cdot f_2}(o)\vert +|\beta|\cdot\E_P\{\vert  t_{I_{\mc{O}_0}\cdot f_1}(O)-t_{I_{\mc{O}_0}\cdot f_2}(O)\vert\}$.

Let $\varepsilon>0$. By assumption 3, there exists $\delta>0$ such that if $d(f_1,f_2)<\delta$, then $\left\vert t_{I_{\mc{O}_0}\cdot f_1}(o)-t_{I_{\mc{O}_0}\cdot f_2}(o)\right\vert <\varepsilon/2$. By assumption 4, there exists $\delta'>0$ such that  if $d(f_1,f_2)<\delta'$, then $|\beta|\cdot\E_P\{\vert  t_{I_{\mc{O}_0}\cdot f_1}(O)-t_{I_{\mc{O}_0}\cdot f_2}(O)\vert\}<\varepsilon/2$. Then, if $d(f_1,f_2)<\min\{\delta,\delta'\}$, we have $ \vert h_{f_1}(o)-h_{f_2}(o)\vert<\varepsilon$.

By Assumption 1 and definition of $\mc{F}'$, 
$\mc{F}'$ is separable with respect to $d$ with dense subset $\mc{S}'$. Then, it follows from Lemma~\ref{lemma:countable} that 
\[
\sup_{f\in \mc{F}'} \Big\{\frac{1}{n}\sum_{i=1}^nh_f(O_i) \Big\}
= \sup_{f \in \mc{S}'} \Big\{\frac{1}{n}\sum_{i=1}^nh_f(O_i) \Big\},\: \sup_{f\in \mc{F}'} \Big\{-\frac{1}{n}\sum_{i=1}^nh_f(O_i) \Big\}
= \sup_{f \in \mc{S}'} \Big\{-\frac{1}{n}\sum_{i=1}^nh_f(O_i) \Big\},
\]
and the left hand sides of both equalities are well-defined random variables. Therefore
\begin{align*}
    \sup_{f\in \mc{F}'} \Big\vert\frac{1}{n}\sum_{i=1}^nh_f&(O_i) \Big\vert = \max\Bigg\{\sup_{f\in \mc{F}'} \Big[\frac{1}{n}\sum_{i=1}^nh_f(O_i) \Big],\sup_{f\in \mc{F}'} \Big[-\frac{1}{n}\sum_{i=1}^nh_f(O_i) \Big]\Bigg\}
    \\
    &=\max\Bigg\{\sup_{f\in \mc{S}'} \Big[\frac{1}{n}\sum_{i=1}^nh_f(O_i) \Big],\sup_{f\in \mc{S}'} \Big[-\frac{1}{n}\sum_{i=1}^nh_f(O_i) \Big]\Bigg\}=\sup_{f\in \mc{S}'} \Big\vert\frac{1}{n}\sum_{i=1}^nh_f(O_i) \Big\vert
\end{align*}
and the leftmost hand side is a well-defined random variable. This concludes the proof.
\end{proof}

Lemmas \ref{lemma:lemma11foster}--\ref{lemma:lemma14foster} below are minor variations of results in \cite{wainwright2019high} and  \cite{foster2023orthogonal}. The following Lemma provides a weaker bound than Lemma 14.21 of \cite{wainwright2019high}. 

\begin{lemma}
Suppose that:
\begin{enumerate}
    \item The assumptions of Corollary \ref{corollary:countable_Lf} hold with $\mc{F}$ satisfying: (1.1) $\mc{F}$ is uniformly bounded by $b$, (1.2) for each $o\in\mc{O}$, the evaluation functional $f\mapsto f(o)$ is continuous relative to $d$, and (1.3) $f=0$ belongs to $\mc{F}$.
    \item $t_f$ satisfies $t_f=0$ when $f=0$.
    \item There exists a constant $L$ such that for any $f_1,f_2\in\mc{L}^2(P_O)$, $|t_{f_1}(o)-t_{f_2}(o)|\le L\cdot\left\vert f_1(o)-f_2(o)\right\vert$ for all $o\in\mc{O}$ ($o\in\mc{O}_0$).
\end{enumerate}
Define $h_f$ as in Corollary \ref{corollary:countable_Lf} with $\beta = 1$. For any $r>0$ define $\mc{F}_r:=\left\{f \in \mc{F}\::\:\Vert I_{\mc{O}_0}\cdot f\Vert_2 \le r \right\}$ and $Z_n(r):=\sup_{f\in \mc{F}_r} \big\vert\frac{1}{n}\sum_{i=1}^nh_f(O_i) \big\vert$. Then, $Z_n(r)$ is a well-defined random variable and for all $u>0$, $\text{P}\left\{Z_n(r)\ge 4L\mathcal{R}_n\left(I_{\mc{O}_0}\cdot\mc{F}, r\right)+u\right\}\le 2\exp\Big(-\frac{nu^2}{8eL^2r^2+128ebL^2\mc{R}_n\left(I_{\mc{O}_0}\cdot \mc{F},r\right)+8bLu}\Big)$,
    where $e=\exp(1)$ and $\mc{R}_n(I_{\mc{O}_0}\cdot\mc{F},r)$ is the localized Rademacher complexity of radius $r$ of the function class $I_{\mc{O}_0}\cdot\mc{F}:=\{I_{\mc{O}_0}\cdot f\:|\:f\in\mc{F}\}$.
    Moreover, if $\delta_n$ is any solution to the inequality $\mc{R}_n\left(star\left(I_{\mc{O}_0}\cdot\mc{F}\right),\delta\right)\le\frac{\delta^2}{b}$,
    then for each $r\ge\delta_n$, $ \text{P}\{Z_n(r)\ge\frac{4Lr\delta_n}{b}+u\}\le 2\exp\Big(-\frac{nu^2}{8eL^2r^2+128eL^2r\delta_n+8bLu}\Big)$.
    \label{lemma:lemma11foster}
\end{lemma}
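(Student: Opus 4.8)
The plan is to combine (i) a symmetrization-plus-contraction bound on $\E[Z_n(r)]$ with (ii) the Talagrand concentration inequality of Lemma~\ref{lemma:talagrand}, and then to rewrite the resulting tail bound in the critical-radius form by the standard star-shapedness rescaling.

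For the measurability claim, I would check that the hypotheses of Corollary~\ref{corollary:countable_Lf} hold with $\mc{F}'=\mc{F}_r$ and $\beta=1$: the Lipschitz bound in assumption~3 together with the pointwise continuity of $f\mapsto f(o)$ from (1.2) gives continuity of $f\mapsto t_{I_{\mc{O}_0}\cdot f}(o)$ at every $o$, while uniform boundedness of $\mc{F}$ and dominated convergence give continuity of $f\mapsto t_{I_{\mc{O}_0}\cdot f}$ into $\mc{L}^1(P_O)$. Corollary~\ref{corollary:countable_Lf} then shows $Z_n(r)$ equals the supremum over the countable set $\mc{S}_r:=\mc{F}_r\cap\mc{S}$, hence is a well-defined random variable, and it lets us take all subsequent suprema over countable sets.

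To bound $\E[Z_n(r)]$, I would first note that the Lipschitz bound together with $t_f=0$ at $f=0$ forces $t_h(o)$ to depend on $h$ only through $h(o)$, via an $L$-Lipschitz map $\tau_o:\mathbb{R}\to\mathbb{R}$ with $\tau_o(0)=0$; thus $t_{I_{\mc{O}_0}\cdot f}(O_i)=\tau_{O_i}\big((I_{\mc{O}_0}\cdot f)(O_i)\big)$. Symmetrization (applied to the countable, uniformly bounded class $\{t_{I_{\mc{O}_0}\cdot f}:f\in\mc{S}_r\}$) introduces a factor $2$, and the Ledoux--Talagrand contraction inequality applied conditionally on $O_1,\dots,O_n$ strips off $\tau_{O_i}$ at the cost of a further factor $2L$, yielding $\E[Z_n(r)]\le 4L\,\mc{R}_n(I_{\mc{O}_0}\cdot\mc{F},r)$; here one checks that the radius constraint $\Vert I_{\mc{O}_0}\cdot f\Vert_2\le r$ defining $\mc{F}_r$ is exactly the one appearing in the definition of $\mc{R}_n(I_{\mc{O}_0}\cdot\mc{F},r)$.

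I would then apply Lemma~\ref{lemma:talagrand} to the countable index set $\mc{S}_r\times\{+1,-1\}$ with functions $(f,\sigma)\mapsto\sigma h_f$, so that the one-sided supremum there coincides with $Z_n(r)$; the inputs are the uniform bound $|\sigma h_f|\le 2Lb$ (since $|t_{I_{\mc{O}_0}\cdot f}|\le L\Vert f\Vert_\infty\le Lb$) and the variance proxy $\sup_f\E[h_f^2]\le L^2\Vert I_{\mc{O}_0}\cdot f\Vert_2^2\le L^2r^2$. Substituting $b\leftarrow 2Lb$ and $L^2r^2$ into the Talagrand bound, enlarging its denominator by replacing $\E[Z_n(r)]$ with $4L\,\mc{R}_n(I_{\mc{O}_0}\cdot\mc{F},r)$ (which only weakens the inequality), and using $\{Z_n(r)\ge 4L\,\mc{R}_n(I_{\mc{O}_0}\cdot\mc{F},r)+u\}\subseteq\{Z_n(r)\ge\E[Z_n(r)]+u\}$ produces the first displayed tail bound. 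For the critical-radius version I would use that $\mathrm{star}(I_{\mc{O}_0}\cdot\mc{F})$ contains $I_{\mc{O}_0}\cdot\mc{F}$ and that $\delta\mapsto\mc{R}_n(\mathrm{star}(I_{\mc{O}_0}\cdot\mc{F}),\delta)/\delta$ is non-increasing (rescale any admissible $g$ by $\delta_n/r$), so that $\mc{R}_n(I_{\mc{O}_0}\cdot\mc{F},r)\le\mc{R}_n(\mathrm{star}(I_{\mc{O}_0}\cdot\mc{F}),r)\le r\delta_n/b$ for $r\ge\delta_n$; feeding this into the first bound (replacing $4L\,\mc{R}_n$ by $4Lr\delta_n/b$ and $128ebL^2\mc{R}_n$ by $128eL^2r\delta_n$) gives the second display. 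The routine parts are the symmetrization and contraction estimates; the steps that demand care are the constant bookkeeping through Lemma~\ref{lemma:talagrand} after doubling the index set, and---upstream of everything---verifying the hypotheses of Corollary~\ref{corollary:countable_Lf} together with the representation $t_{I_{\mc{O}_0}\cdot f}(o)=\tau_o\big((I_{\mc{O}_0}\cdot f)(o)\big)$ that makes the contraction inequality applicable.
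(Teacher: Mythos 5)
Your proposal is correct and follows essentially the same route as the paper's proof: reduction to a countable index set via Corollary~\ref{corollary:countable_Lf}, a symmetrization-plus-Ledoux--Talagrand-contraction bound $\E[Z_n(r)]\le 4L\,\mc{R}_n(I_{\mc{O}_0}\cdot\mc{F},r)$, Talagrand's inequality applied to the doubled class $\{\alpha h_f:\alpha\in\{-1,1\}\}$ with uniform bound $2Lb$ and variance proxy $L^2r^2$, and the star-shapedness monotonicity of $\delta\mapsto\mc{R}_n(\mathrm{star}(I_{\mc{O}_0}\cdot\mc{F}),\delta)/\delta$ for the critical-radius form. Your explicit representation $t_{I_{\mc{O}_0}\cdot f}(o)=\tau_o\bigl((I_{\mc{O}_0}\cdot f)(o)\bigr)$ justifying applicability of the contraction inequality, and your (correct) statement that the normalized Rademacher complexity is non-increasing, are small precision improvements over the paper's write-up, which leaves the former implicit and misstates the latter as ``non-decreasing.''
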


\begin{proof} By assumption 1, the conclusion of Corollary \ref{corollary:countable_Lf} holds; i.e., $Z_n(r)$ is well-defined. Furthermore, $ Z_n(r)=\sup_{f\in \mc{S}_r} \big\vert\frac{1}{n}\sum_{i=1}^nh_f(O_i) \big\vert$.
where $\mc{S}_r := \mc{S} \cap \mc{F}_r$.
Now define the function class $\mc{C}:=\left\{\alpha h_f\::\:\alpha\in\{-1,1\}, f\in\mc{S}_r\right\}$ which is countable by construction. With $\mc{C}$ so defined we have that $ Z_n(r)=\sup_{h\in\mc{C}}\frac{1}{n}\sum_{i=1}^nh(O_i)$.

For any $h\in\mc{C}$, write $h=\alpha h_f$ for some $\alpha\in\{-1,1\}$ and $f\in\mc{S}_r$. Letting $f'=0$, we have
\begin{align*}
    |h(o)|&\le |t_{I_{\mc{O}_0}\cdot f}(o)|+\E\big\{\big\vert t_{I_{\mc{O}_0}\cdot f}(O)\big\vert\big\}\\
    &=|t_{I_{\mc{O}_0}\cdot f}(o)-t_{I_{\mc{O}_0}\cdot f'}(o)|+\E\big\{\big\vert t_{I_{\mc{O}_0}\cdot f}(O)-t_{I_{\mc{O}_0}\cdot f'}(O)\big\vert\big\}\\
    &\le L\cdot I_{\mc{O}_0}(o)\big\vert f(o)-f'(o)\big\vert + L\cdot \E\big\{I_{\mc{O}_0}(O)\big\vert f(O)-f'(O)\big\vert \big\}\le 2Lb,
\end{align*}
where the first inequality is by the triangle inequality, the first and second equalities are by assumptions 1.3 and 2, the second inequality is by assumption 3, and the third inequality is by assumption 1.1. We thus conclude that $\mc{C}$ is uniformly bounded by $2Lb$. Since $\mc{C}$ is countable, applying Talagrand inequality (Lemma \ref{lemma:talagrand}) we obtain that, for any $u>0$,
    \[
    \text{P}\left[Z_n(r)\ge \mathbb{E}\left\{Z_n(r)\right\}+u\right]\le 2\exp\left(-\frac{nu^2}{8e\sup_{h\in\mc{C}}\mathbb{E}[h(O)^2]+32ebL\mathbb{E}\left\{Z_n(r)\right\}+8bLu}\right).
    \]
    Again, take $h\in\mc{C}$ and write $h=\alpha h_f$ for some $\alpha\in\{-1,1\}$ and $f\in\mc{S}_r$. Thus $\mathbb{E}[h(O)^2]=\text{var}[t_{I_{\mc{O}_0}\cdot f}(O)]\le \mathbb{E}[\{ t_{I_{\mc{O}_0}\cdot f}(O)\}^2]\le L^2\E[\{I_{\mc{O}_0}(O)\cdot f(O)\}^2]\le L^2r^2$, where the second inequality follows by invoking assumption 3 applied to $f_1=I_{\mc{O}_0}\cdot f$ and $f_2=0$ and noting that by assumption 2, $t_{f_2}=0$, and the third inequality is because $f\in\mc{F}_r$.

   We now bound $\E\{Z_n(r)\}=\E\{\sup_{f\in\mc{S}_r}\vert\frac{1}{n}\sum_{i=1}^n[t_{I_{\mc{O}_0}\cdot f}(O_i)-\E\{t_{I_{\mc{O}_0}\cdot f}(O)\}]\vert\}$ using a symmetrization argument. Letting $\{O'_i\}_{i=1}^n$ be independent copies of $O$ and independent of $\{O_i\}_{i=1}^n$, we can write $\E\{Z_n(r)\}=\E\{\sup_{f\in\mc{S}_r}\vert\E[\frac{1}{n}\sum_{i=1}^n\{t_{I_{\mc{O}_0}\cdot f}(O_i)-t_{I_{\mc{O}_0}\cdot f}(O_i')\}]\vert\}$. By the inequality $\sup_{d\in\mc{D}}\E\left\{|d(O)|\right\}\le\E\left\{\sup_{d\in\mc{D}}|d(O)|\right\}$, which is valid for any countable class $\mc{D}$ of real-valued measurable functions, we conclude $\E\{Z_n(r)\}\le\E[\sup_{f\in\mc{S}_r}\vert\frac{1}{n}\sum_{i=1}^n\{t_{I_{\mc{O}_0}\cdot f}(O_i)-t_{I_{\mc{O}_0}\cdot f}(O_i')\}\vert]$.

    Now, let $\varepsilon_1,\dots,\varepsilon_n$ be $n$ i.i.d. Rademacher random variables, independent of $O_i$ and $O'_i$ for all $i=1,\dots,n$. Given this independence assumption, for any function $f\in\mc{F}$, the $n \times 1$ random vector with $i^{th}$ component $\varepsilon_i\{t_{I_{\mc{O}_0}\cdot f}(O_i)-t_{I_{\mc{O}_0}\cdot f}(O_i')\}$ has the same joint distribution as the $n \times 1$ random vector with $i^{th}$ component $ t_{I_{\mc{O}_0}\cdot f}(O_i)-t_{I_{\mc{O}_0}\cdot f}(O_i')$.
    Hence, $\E[\sup_{f\in\mc{S}_r}\vert\frac{1}{n}\sum_{i=1}^n\varepsilon_i\{t_{I_{\mc{O}_0}\cdot f}(O_i)-t_{I_{\mc{O}_0}\cdot f}(O_i')\}\vert]\le2\E\{\sup_{f\in\mc{S}_r}\vert\frac{1}{n}\sum_{i=1}^n\epsilon_i t_{I_{\mc{O}_0}\cdot f}(O_i)\vert\}\\=2\E\{\sup_{f\in\mc{S}'_r}\vert\frac{1}{n}\sum_{i=1}^n\epsilon_i  t_f(O_i)\vert\}$.
where $\mc{S}_r':=\left\{I_{\mc{O}_0}\cdot f:f\in\mc{S}_r\right\}$. Next, invoking the Ledoux-Talagrand contraction inequality (see inequality (5.61) of \cite{wainwright2019high}), we obtain $ 2\E\{\sup_{f\in\mc{S}'_r}\vert\frac{1}{n}\sum_{i=1}^n\epsilon_i  t_f(O_i)\vert\}\le
4L\E\{\sup_{f\in\mc{S}'_r}\vert\frac{1}{n}\sum_{i=1}^n\epsilon_i f(O_i)\vert\}\\=4L\E\{\sup_{f\in\mc{S}_r}\vert\frac{1}{n}\sum_{i=1}^n\epsilon_i I_{\mc{O}_0}(O_i)\cdot f(O_i)\vert\}=4L\E\{\sup_{f\in\mc{F}_r}\vert\frac{1}{n}\sum_{i=1}^n\epsilon_i I_{\mc{O}_0}(O_i)\cdot f(O_i)\vert\}=4L \mc{R}_n\left(I_{\mc{O}_0}\cdot\mc{F},r\right)$
    where the second equality follows by applying Corollary \ref{corollary:countable_Lf} to the functions $t_f(o,\varepsilon):=\varepsilon f(o)$ and $\beta=0$, noting that assumption 3 of the corollary holds because the map $f\mapsto f(o)$ is continuous (assumption 1.2). Thus, $\mathbb{E}\left\{Z_n(r)\right\}\le4L\cdot \mc{R}_n\left(I_{\mc{O}_0}\cdot\mc{F},r\right)$. Therefore, $ \text{P}[Z_n(r)\ge 4L\cdot \mc{R}_n(I_{\mc{O}_0}\cdot\mc{F},r)+u]\le \text{P}[Z_n(r)\ge \mathbb{E}\{Z_n(r)\}+u]\\ \le2\exp\Big(-\frac{nu^2}{8eL^2r^2+32ebL\mathbb{E}\left\{Z_n(r)\right\}+8bLu}\Big)\le 2\exp\Big(-\frac{nu^2}{8eL^2r^2+128ebL^2\mc{R}_n(I_{\mc{O}_0}\cdot\mc{F},r)+8bLu}\Big)$.
    
    Because $star\left(I_{\mc{O}_0}\cdot\mc{F}\right)$ is star-shaped, by the arguments in the proof of Lemma 13.6 of \cite{wainwright2019high}, the function $\delta\mapsto\frac{\mc{R}_n\left(star\left(I_{\mc{O}_0}\cdot\mc{F}\right),\delta\right)}{\delta}$ is non-decreasing on the interval $(0,\infty)$. Also, since $I_{\mc{O}_0}\cdot\mc{F}\subset star\left(I_{\mc{O}_0}\cdot\mc{F}\right)$, then, for any $r\ge\delta_n$, it holds $\mc{R}_n\left(I_{\mc{O}_0}\cdot\mc{F},r\right)/r\le\mc{R}_n\left(star\left(I_{\mc{O}_0}\cdot\mc{F}\right),r\right)/r\le\mc{R}_n\left(star\left(I_{\mc{O}_0}\cdot\mc{F}\right),\delta_n\right)/\delta_n\le\frac{\delta_n}{b}$. Then, $\text{P}\{Z_n(r)\ge \frac{4Lr\delta_n}{b}+u\}\le\text{P}\{Z_n(r)\ge 4L \mc{R}_n(I_{\mc{O}_0}\cdot\mc{F},r)+u\}\le 2\exp\Big(-\frac{nu^2}{8eL^2r^2+128eL^2r\delta_n+8bLu}\Big)$.
\end{proof}

\begin{lemma}[Lemma 12 of \cite{foster2023orthogonal}]
     Suppose the assumptions of Lemma \ref{lemma:lemma11foster} hold. Let $h_f$ and $Z_n(r) $ be defined as in Lemma \ref{lemma:lemma11foster} and let $\delta_n$ be any solution to $\mc{R}_n\left(star\left(I_{\mc{O}_0}\cdot\mc{F}\right),\delta\right)\le\frac{\delta^2}{b}$. For fixed $\theta>1$ define the event $\mc{E}:=\{\exists f\in\mc{F}:\Vert I_{\mc{O}_0}\cdot f\Vert_2\ge\delta_n\text{ and } \big\vert\frac{1}{n}\sum_{i=1}^nh_f(O_i) \big\vert\ge \frac{5\theta L\delta_n}{b}\left\Vert I_{\mc{O}_0}\cdot f\right\Vert_2\}$. Also, set $c_1=\left\{8b^2(1+17e)\right\}^{-1}$. Then, $\text{P}(\mathcal{E})\le2\{ 1+I\{\delta_n<b/\theta\}\lfloor\{\log(\theta)\}^{-1}\log(b/\delta_n)\rfloor\} \:\exp(-c_1n\delta_n^2)$ where $\lfloor x \rfloor$ denotes the greatest integer smaller than or equal to $x$.
     \label{lemma:lemma12foster}
\end{lemma}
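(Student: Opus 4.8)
The plan is to run the standard \emph{peeling} (slicing) argument over geometrically growing scales of $\|I_{\mathcal{O}_0}\cdot f\|_2$, invoking the deviation bound of Lemma~\ref{lemma:lemma11foster} on each scale. First I would record the trivial boundedness fact that, since every $f\in\mathcal{F}$ satisfies $\|f\|_\infty\le b$, we have $\|I_{\mathcal{O}_0}\cdot f\|_2\le b$; hence on the event $\mathcal{E}$ the radius $r:=\|I_{\mathcal{O}_0}\cdot f\|_2$ of the offending $f$ necessarily lies in the bounded interval $[\delta_n,b]$. I would then cover this interval by the geometric shells $\mathcal{S}_m:=\{f\in\mathcal{F}:\theta^{m}\delta_n\le\|I_{\mathcal{O}_0}\cdot f\|_2\le\theta^{m+1}\delta_n\}$, for $m=0,1,\dots,M-1$, where $M$ is the least integer with $\theta^{M}\delta_n\ge b$, i.e.\ $M=\lceil\log(b/\delta_n)/\log\theta\rceil$ (and $M=1$ in the degenerate regime $\delta_n\ge b$). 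Because $\lceil x\rceil\le 1+\lfloor x\rfloor$ for all $x$, and $\lceil x\rceil=1$ whenever $x\le 1$, equivalently whenever $\delta_n\ge b/\theta$, one gets $M\le 1+I\{\delta_n<b/\theta\}\lfloor\log(b/\delta_n)/\log\theta\rfloor$, which is exactly the combinatorial factor appearing in the statement.

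The core estimate is a uniform per-shell bound. Fix $m$ and put $r_m:=\theta^{m+1}\delta_n\ (\ge\delta_n)$. If some $f\in\mathcal{S}_m$ witnesses $\mathcal{E}$, then on the one hand $f\in\mathcal{F}_{r_m}$, so $\bigl|\tfrac1n\sum_{i}h_f(O_i)\bigr|\le Z_n(r_m)$; on the other hand the lower bound $\|I_{\mathcal{O}_0}\cdot f\|_2\ge\theta^{m}\delta_n$ forces $\bigl|\tfrac1n\sum_{i}h_f(O_i)\bigr|\ge\tfrac{5\theta L\delta_n}{b}\,\theta^{m}\delta_n=\tfrac{5Lr_m\delta_n}{b}$. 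Hence the shell-$m$ portion of $\mathcal{E}$ implies $Z_n(r_m)\ge\tfrac{5Lr_m\delta_n}{b}$. I would then apply the second (``Moreover'') conclusion of Lemma~\ref{lemma:lemma11foster} at radius $r=r_m$ with the calibrated choice $u=\tfrac{Lr_m\delta_n}{b}$, so that $\tfrac{4Lr_m\delta_n}{b}+u=\tfrac{5Lr_m\delta_n}{b}$ matches the threshold exactly, obtaining
\[
\mathbb{P}\Bigl\{Z_n(r_m)\ge\tfrac{5Lr_m\delta_n}{b}\Bigr\}\le 2\exp\!\left(-\frac{nL^2r_m^2\delta_n^2/b^2}{8eL^2r_m^2+128eL^2r_m\delta_n+8L^2r_m\delta_n}\right).
\]

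The key simplification is that this exponent does not depend on $m$: using $r_m\ge\delta_n$, hence $r_m\delta_n\le r_m^2$, the denominator is at most $8eL^2r_m^2+128eL^2r_m^2+8L^2r_m^2=8L^2r_m^2(1+17e)$, so the exponent is at most $-\tfrac{n\delta_n^2}{8b^2(1+17e)}=-c_1n\delta_n^2$. Thus each shell contributes probability at most $2\exp(-c_1n\delta_n^2)$, and a union bound over the at most $1+I\{\delta_n<b/\theta\}\lfloor\log(b/\delta_n)/\log\theta\rfloor$ shells gives the claimed bound. The only place where genuine care is required is the bookkeeping of the shell count, so that the floor/indicator factor comes out exactly (in particular, verifying that when $\delta_n\ge b/\theta$ a single shell of radius $\theta\delta_n$ already contains every admissible $f$, making the indicator vanish, and dealing with the trivial case $\delta_n\ge b$); all of the probabilistic content is already packaged in Lemma~\ref{lemma:lemma11foster}, so no new concentration argument is needed.
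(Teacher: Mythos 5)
Your proposal is correct and follows essentially the same route as the paper's proof: the same geometric peeling over shells of $\Vert I_{\mc{O}_0}\cdot f\Vert_2$, the same shell count $M\le 1+I\{\delta_n<b/\theta\}\lfloor\{\log\theta\}^{-1}\log(b/\delta_n)\rfloor$, the same application of the ``Moreover'' part of Lemma~\ref{lemma:lemma11foster} at radius $r_m$ with $u=Lr_m\delta_n/b$, and the same $m$-independent simplification of the exponent via $r_m\delta_n\le r_m^2$ followed by a union bound. The only difference is an immaterial index shift in how the shells are labeled.
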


\begin{proof}
   This proof uses a peeling argument as in the proof of Lemma 12 of \citet*{foster2023orthogonal}. Since $\Vert I_{\mc{O}_0}\cdot f\Vert_2\le \Vert f\Vert_{2}\le\Vert f\Vert_{\infty}$, Assumption 1.1 of Lemma \ref{lemma:lemma11foster} implies that $\Vert I_{\mc{O}_0}\cdot f\Vert_{2}\le b$ for any $f\in\mc{F}$. Then, any $f\in\mc{F}$ satisfying $\Vert I_{\mc{O}_0}\cdot f\Vert_2\ge \delta_n$ belongs to the set $\mc{S}_m:=\{f\in\mc{F}\text{ }\::\:\text{ }\theta^{m-1}\delta_n\le\Vert I_{\mc{O}_0}\cdot f\Vert_2\le \theta^m\delta_n\}$ for some $m\in\left\{1,\cdots,M\right\}$ where $M$ is the smallest integer satisfying $b\le \theta^M \delta_n$. We find such $M$ as follows. If $\delta_n\ge b/\theta$ then $M=1$ is such constant because $\theta>1$. If $\delta_n<b/\theta$ then $b\le \theta^M \delta_n$ iff $\log(b/\delta_n)\le M\log(\theta)$ iff $\left\{\log(\theta)\right\}^{-1}\log(b/\delta_n)\le M$. Therefore, we conclude that $M=1+I\{\delta_n<b/\theta\}\lfloor\{\log(\theta)\}^{-1}\log(b/\delta_n)\rfloor$. Note that $M$ is a positive integer because $\theta>1$ and $b/\delta_n>\theta$ when $\delta_n<b/\theta$.
    
    Now, if the event $\mc{E}\cap\mc{S}_m$ occurs for some $m\in\{1,\dots,M\}$, then there exists $f$ with $\theta^{m-1}\delta_n\le\Vert I_{\mc{O}_0}\cdot f\Vert_2\le r_m:=\theta^m\delta_n$ such that $\vert\frac{1}{n}\sum_{i=1}^nh_f(O_i) \vert\ge \frac{5\theta L\delta_n}{b}\left\Vert I_{\mc{O}_0}\cdot f\right\Vert_2
    \ge \frac{5\theta L\delta_n}{b} \frac{r_m}{\theta} =5 \frac{L\delta_n r_m}{b}$.
    Consequently, we have $\text{P}\left(\mc{E}\cap\mc{S}_m\right)\le\text{P}\left[Z_n(r_m)\ge5 \frac{L\delta_n r_m}{b}\right]$. Applying Lemma \ref{lemma:lemma11foster} with $r=r_m$ and $u=\frac{L\delta_nr_m}{b}$, and setting $c_1=\left\{8b^2(1+17e)\right\}^{-1}$, we obtain that $\text{Pr}\left(\mc{E}\cap\mc{S}_m\right)$ happens with probability at most $2\exp\Big\{-\frac{nL^2r_m^2\delta_n^2}{8b^2(eL^2r_m^2+16eL^2\delta_nr_m+L^2\delta_nr_m)}\Big\}
    \le 2\exp\left(-c_1n\delta_n^2\right)$ where the inequality holds since $\delta_n\le r_m$. Then $\text{P}(\mc{E})\le\sum_{m=1}^M\text{P}(\mc{E}\cap\mc{S}_m)\le 2M\exp(-c_1 n\delta_n^2)=2\{ 1+I\{\delta_n<b/\theta\}\lfloor\{\log(\theta)\}^{-1}\log(b/\delta_n)\rfloor\}\exp\left(-c_1n\delta_n^2\right)$.
\end{proof}

\begin{lemma}[Lemma 14 of \cite{foster2023orthogonal}]
Suppose the assumptions of Lemma \ref{lemma:lemma11foster} hold. Let $h_f$ and $Z_n(r) $ be defined as in Lemma \ref{lemma:lemma11foster} and let $\delta_n$ be any solution to the inequality $\mc{R}_n\left(star\left(I_{\mc{O}_0}\cdot\mc{F}\right),\delta\right)\le\frac{\delta^2}{b}$. Set $c_1=\left\{8b^2(1+17e)\right\}^{-1}$. Then, 
\begin{equation}
\big\vert\frac{1}{n}
\sum_{i=1}^n h_f(O_i) \big\vert \le 
\frac{6L\delta_n}{b} \left\{ \left\Vert I_{\mc{O}_0}\cdot f \right\Vert_2 + \delta_n \right\} \quad \forall f\in\mc{F}
\label{emp_process}
\end{equation}
happens with probability at least 
$1-\{4+2I\{\delta_n<5b/6\}\lfloor\{\log(6/5)\}^{-1}\log(b/\delta_n)\rfloor\}\exp(-c_1n\delta_n^2)$.
\label{lemma:lemma14foster}
\end{lemma}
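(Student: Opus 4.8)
The plan is to partition $\mc{F}$ according to the size of $\Vert I_{\mc{O}_0}\cdot f\Vert_2$ relative to $\delta_n$, bound each piece using one of the two preceding lemmas, and then combine the two estimates by a union bound. I would first dispose of the ``small-radius'' piece $\{f\in\mc{F}:\Vert I_{\mc{O}_0}\cdot f\Vert_2<\delta_n\}$. For such $f$ the right-hand side of~\eqref{emp_process} is at least $6L\delta_n^2/b$, and $\sup_{f:\Vert I_{\mc{O}_0}\cdot f\Vert_2<\delta_n}\big\vert\frac1n\sum_i h_f(O_i)\big\vert\le Z_n(\delta_n)$, so it suffices to show $Z_n(\delta_n)\le 6L\delta_n^2/b$ with the required probability. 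I would invoke the ``moreover'' part of Lemma~\ref{lemma:lemma11foster} with $r=\delta_n$ and deviation level $u=2L\delta_n^2/b$: this yields $Z_n(\delta_n)\le \tfrac{4L\delta_n^2}{b}+u=\tfrac{6L\delta_n^2}{b}$ outside an event of probability at most $2\exp\bigl(-nu^2/(8eL^2\delta_n^2+128eL^2\delta_n^2+8bLu)\bigr)$. Substituting $u$ and simplifying, the exponent becomes $n\delta_n^2/\{b^2(34e+4)\}$, and since $34e+4\le 8+136e$ this is at least $c_1 n\delta_n^2$ with $c_1=\{8b^2(1+17e)\}^{-1}$; hence this piece fails with probability at most $2\exp(-c_1 n\delta_n^2)$. (The suprema here are bona fide random variables by Corollary~\ref{corollary:countable_Lf}, whose hypotheses are among the standing assumptions of Lemma~\ref{lemma:lemma11foster}.)

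Next I would handle the ``large-radius'' piece $\{f\in\mc{F}:\Vert I_{\mc{O}_0}\cdot f\Vert_2\ge\delta_n\}$ by applying Lemma~\ref{lemma:lemma12foster} with the specific choice $\theta=6/5$, so that $5\theta L\delta_n/b=6L\delta_n/b$. On the complement of the event $\mc{E}$ defined there we then have, for every $f$ with $\Vert I_{\mc{O}_0}\cdot f\Vert_2\ge\delta_n$, the bound $\big\vert\frac1n\sum_i h_f(O_i)\big\vert<\frac{6L\delta_n}{b}\Vert I_{\mc{O}_0}\cdot f\Vert_2\le\frac{6L\delta_n}{b}\{\Vert I_{\mc{O}_0}\cdot f\Vert_2+\delta_n\}$, which is exactly~\eqref{emp_process} for these $f$. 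With $\theta=6/5$ the conclusion of Lemma~\ref{lemma:lemma12foster} reads $\mathrm{P}(\mc{E})\le 2\{1+I\{\delta_n<5b/6\}\lfloor\{\log(6/5)\}^{-1}\log(b/\delta_n)\rfloor\}\exp(-c_1 n\delta_n^2)$, with the same constant $c_1$.

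Finally I would union-bound the two bad events. Their probabilities sum to $\{4+2I\{\delta_n<5b/6\}\lfloor\{\log(6/5)\}^{-1}\log(b/\delta_n)\rfloor\}\exp(-c_1 n\delta_n^2)$, and on the complementary event every $f\in\mc{F}$ — falling in either the small- or the large-radius regime (the boundary case $\Vert I_{\mc{O}_0}\cdot f\Vert_2=\delta_n$ being covered by the large-radius argument) — satisfies~\eqref{emp_process}, which is the claim. The only genuinely delicate step is the constant bookkeeping in the small-radius estimate: one must choose $u$ so that $4L\delta_n^2/b+u$ is precisely the target $6L\delta_n^2/b$ while the resulting tail exponent still dominates $c_1 n\delta_n^2$ for the \emph{same} $c_1=\{8b^2(1+17e)\}^{-1}$ that appears in Lemma~\ref{lemma:lemma12foster}; the choice $u=2L\delta_n^2/b$ together with $34e+4\le 8+136e$ makes this go through, and the choice $\theta=6/5$ is what aligns the large-radius threshold $5\theta L\delta_n/b$ with the constant $6$ in the statement. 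Everything else is a routine combination of the two lemmas.
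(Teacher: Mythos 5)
Your proof is correct and follows essentially the same route as the paper's: split $\mc{F}$ by whether $\Vert I_{\mc{O}_0}\cdot f\Vert_2$ is below or above $\delta_n$, control the small-radius piece via the ``moreover'' part of Lemma~\ref{lemma:lemma11foster} at $r=\delta_n$, control the large-radius piece via Lemma~\ref{lemma:lemma12foster} with $\theta=6/5$, and union-bound. The only (immaterial) difference is your choice $u=2L\delta_n^2/b$ where the paper takes $u=L\delta_n^2/b$ (threshold $5L\delta_n^2/b\le 6L\delta_n^2/b$, giving exactly the exponent $c_1 n\delta_n^2$); your bookkeeping $34e+4\le 8+136e$ is right, so both choices yield the same tail $2\exp(-c_1 n\delta_n^2)$.
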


\begin{proof}
    Consider the events $ \mc{E}_0=\{Z_n(\delta_n)\ge5L\delta_n^2/b\}$ and
    \[
     \mc{E}_1=\Big\{\exists f\in\mc{F}:\Vert I_{\mc{O}_0}\cdot f\Vert_2\ge\delta_n\text{ and } \Big\vert\frac{1}{n}
\sum_{i=1}^n h_f(O_i) \Big\vert\ge \frac{6L\delta_n}{b}\Vert I_{\mc{O}_0}\cdot f\Vert_2\Big\}.
    \]

If (\ref{emp_process}) is violated, then either $\mc{E}_0$ or $\mc{E}_1$ must occur. Applying Lemma \ref{lemma:lemma11foster} with $r=\delta_n$ and $u=\frac{L\delta_n^2}{b}$ implies that $\mc{E}_0$ happens with probability at most $2\exp(-c_1n\delta_n^2)$ where $c_1=\{8b^2(1+17e)\}^{-1}$. Applying Lemma \ref{lemma:lemma12foster} with $\theta = 6/5$ we conclude that $\mc{E}_1$ happens with probability at most $\{2+2I\{\delta_n<5b/6\}\lfloor\{\log(6/5)\}^{-1}\log(b/\delta_n)\rfloor\}\:\exp(-c_1n\delta_n^2)$. Thus, (\ref{emp_process}) is violated with probability at most $\{4+2I\{\delta_n<5b/6\}\lfloor\{\log(6/5)\}^{-1}\log(b/\delta_n)\rfloor\}\:\exp(-c_1n\delta_n^2)$. This concludes the proof.
\end{proof}

To state the next lemma, let $B$ and $B'$ be the constants defined in Assumption \ref{ass:closeness}, and $B_1$, $b$, and $ c_1$ as in Theorem~\ref{theo:conv}. Let $k_1:=\{\sup_{w\in\mc{W}}K_{\mc{H}}(w,w)\}^{1/2}$, $k_2:=\{\sup_{z\in\mc{Z}}K_{\mc{G}}(z,z)\}^{1/2}$, $c_0:=9\max\{\Vert s\Vert_\infty,\Vert r\Vert_\infty\cdot k_1B, 2c^2b\}/(c^2 b)$, and $c_0':=c^2\cdot c_0(2+3c_0+2\max\{1,c_0/9\})$. The following lemma extends Lemma 10 of \cite{bennett2023source} to allow for $\lambda_\mc{G}\neq0$.

\begin{lemma} Under the assumptions of Theorem~\ref{theo:conv} and with the constants defined as in that Theorem, the following event happens for any $h\in \mc{H}_{B}$:
\begin{align*}
    \frac{1}{4c^2}\Big\{\big\Vert I_0 \mc{T} \big(h_0^\dag-&\tilde h\big)\big\Vert^2_2-\big\Vert I_0\mc{T} \big(h_0^\dag-h\big)\big\Vert^2_2\Big\}+\lambda_{\mc{H}}\Big(\big\Vert \tilde h\big\Vert_{\mc{H}}^2-\big\Vert h\big\Vert_{\mc{H}}^2\Big)\\
     &\quad\quad\quad\le \frac{1}{2c^2}\big\Vert I_0\mc{T} \big(h_0^\dag-h\big)\big\Vert^2_2+c_0\delta_n\big\Vert  I_0\mc{T}\big(\tilde h-h\big)\big\Vert_2+\lambda_{\mc{G}}B_1^2+c_0'\delta^2_n.
\end{align*}
\label{lemma:erm}
\end{lemma}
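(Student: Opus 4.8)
The plan is to open from the definition of $\tilde h_{KRAS}$ as the minimizer in $\mc{H}_B$ of the empirical adversarial objective and compare its objective value against that of an arbitrary competitor $h\in\mc{H}_B$. Writing $\Psi_n(h):=\max_{g\in\mc{G}}[\E_n\{m(O;g)-I_0(X)r(O)h(W)g(Z)-c^2 I_0(X)g(Z)^2\}-\lambda_\mc{G}\Vert g\Vert_\mc{G}^2]+\lambda_\mc{H}\Vert h\Vert_\mc{H}^2$, optimality gives $\Psi_n(\tilde h)\le\Psi_n(h)$, i.e.
\[
\max_{g\in\mc{G}}\big[\E_n\{\cdots;\tilde h,g\}-\lambda_\mc{G}\Vert g\Vert_\mc{G}^2\big]-\max_{g\in\mc{G}}\big[\E_n\{\cdots;h,g\}-\lambda_\mc{G}\Vert g\Vert_\mc{G}^2\big]\le\lambda_\mc{H}\big(\Vert h\Vert_\mc{H}^2-\Vert\tilde h\Vert_\mc{H}^2\big).
\]
First I would bound each inner maximum. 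Using the closeness/boundedness condition in Assumption~\ref{ass:closeness} (valid since $\tilde h, h\in\mc{H}_B\subset\mc{H}_{2B}$, so $\mc{T}\tilde h,\mc{T}h\in\mc{G}$ with $\Vert\mc{T}\tilde h\Vert_\mc{G},\Vert\mc{T}h\Vert_\mc{G}\le B'$), the relevant maximizing $g$ in the population version is $g^*=(2c^2)^{-1}\mc{T}(h_0^\dag-h)$ (cf.\ Proposition~\ref{prop:innermax} part 1), which lies in $\mc{G}_{B_1}$ with $B_1=B'/c^2$; the penalty $\lambda_\mc{G}\Vert g^*\Vert_\mc{G}^2$ is at most $\lambda_\mc{G}B_1^2$, and the maximum over $\mc{G}$ restricted to $\Vert g\Vert_\mc{G}\le B_1$ differs from the unrestricted population maximum by at most this penalty term. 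So the population inner maxima evaluate (up to the $\lambda_\mc{G}B_1^2$ slack) to $(4c^2)^{-1}\Vert I_0\mc{T}(h_0^\dag-\tilde h)\Vert_2^2$ and $(4c^2)^{-1}\Vert I_0\mc{T}(h_0^\dag-h)\Vert_2^2$ respectively (using $m(o;g)=I_0(x)s(o)g(z)$ and the residual-mean-zero identity at $h_0^\dag$).

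The heart of the argument is then to replace empirical expectations by population ones uniformly over the test class. For fixed $h$, both $\tilde h$ and $h$ are in $\mc{H}_B$, so the differences of inner maxima can be written as $\E_n$ of differences of residual-square-type functions indexed by $g\in\mc{G}_{B_1}$, and the centered empirical process $\E_n-\E$ applied to these must be controlled. This is where I invoke Lemma~\ref{lemma:erm}'s underlying machinery: the functions $o\mapsto I_0(x)s(o)g(z)\{\cdots\}$ composed with $g\in\mc{G}_{B_1}$ form a class whose $\mc{L}^2$-radius is governed by $\Vert I_0\mc{T}(\cdot)\Vert_2$, and Lemma~\ref{lemma:lemma14foster} (applied with $\mc{F}=I_0\cdot\mc{G}_{B_1}$ and appropriate Lipschitz constant $L$ coming from $\Vert s\Vert_\infty$, $\Vert r\Vert_\infty k_1 B$, $2c^2 b$) gives, on the event of probability at least $\kappa(\delta_n)$, a bound of the form $(6L\delta_n/b)\{\Vert I_0\mc{T}(\tilde h-h)\Vert_2+\delta_n\}$ for the deviation. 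Tracking constants carefully: the Lipschitz constant of the residual map in $g$ (through the factors $s(o)$, $r(o)h(w)$, and $c^2 g(z)$ with $g$ uniformly bounded by $b=k_2B_1$) yields $L=\max\{\Vert s\Vert_\infty,\Vert r\Vert_\infty k_1 B, 2c^2 b\}$, so $6L/b$ and a factor $9$ from handling the three residual terms give $c_0=9\max\{\Vert s\Vert_\infty,\Vert r\Vert_\infty k_1 B,2c^2b\}/(c^2 b)$; the $\delta_n^2$ slack, after absorbing the AM-GM cross terms and the constants from Lemma~\ref{lemma:lemma14foster}, yields $c_0'=c^2 c_0(2+3c_0+2\max\{1,c_0/9\})$.

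Assembling: on the $\kappa(\delta_n)$ event, the chain of inequalities
\[
\tfrac{1}{4c^2}\big\{\Vert I_0\mc{T}(h_0^\dag-\tilde h)\Vert_2^2-\Vert I_0\mc{T}(h_0^\dag-h)\Vert_2^2\big\}+\lambda_\mc{H}\big(\Vert\tilde h\Vert_\mc{H}^2-\Vert h\Vert_\mc{H}^2\big)\le \lambda_\mc{G}B_1^2+c_0\delta_n\Vert I_0\mc{T}(\tilde h-h)\Vert_2+c_0'\delta_n^2,
\]
follows directly, and the extra $\tfrac{1}{2c^2}\Vert I_0\mc{T}(h_0^\dag-h)\Vert_2^2$ term in the stated conclusion is a harmless upper bound (one can add it freely since it is nonnegative, or it arises naturally when one uses $\Vert I_0\mc{T}(\tilde h-h)\Vert_2\le\Vert I_0\mc{T}(h_0^\dag-\tilde h)\Vert_2+\Vert I_0\mc{T}(h_0^\dag-h)\Vert_2$ and symmetrizes). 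The main obstacle I anticipate is the bookkeeping around the inner maximization: one must justify that, under Assumption~\ref{ass:closeness}, restricting $g$ to $\mc{G}_{B_1}$ changes the value of each inner max by no more than $\lambda_\mc{G}B_1^2$ plus controllable empirical-process error, and that the Ledoux--Talagrand contraction can be applied to the residual class with the claimed Lipschitz constant — this is exactly the step that extends Lemma 10 of \citet{bennett2023source} to $\lambda_\mc{G}\ne0$, and requires that $\lambda_\mc{G}\ge 157c^2\delta_n^2/B_1^2$ so the penalty on the near-optimal $g$ does not dominate. I would verify this threshold by checking that the near-optimizer $g$ produced by the contraction step still has RKHS norm $O(B_1)$, so that its $\lambda_\mc{G}$-penalty is absorbed into the $\delta_n^2$ term with the stated constant.
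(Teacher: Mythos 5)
Your skeleton matches the paper's: start from the basic ERM inequality for $\tilde h$ against a competitor $h\in\mc{H}_B$, use Proposition~\ref{prop:innermax} part 1 under Assumption~\ref{ass:closeness} to identify the population inner maxima with $(4c^2)^{-1}\Vert I_0\mc{T}(h_0^\dag-\cdot)\Vert_2^2$, and control the empirical--population gaps uniformly over the test class via Lemma~\ref{lemma:lemma14foster} and the critical radius of $I_0\cdot\mc{G}_{B_1}$. However, there are two genuine gaps, and both sit exactly at the steps you flag as ``the main obstacle'' without resolving them.

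First, the term $\frac{1}{2c^2}\Vert I_0\mc{T}(h_0^\dag-h)\Vert_2^2$ on the right-hand side is not a ``harmless upper bound one can add freely''; it is generated by a specific mechanism that your argument omits, and without that mechanism your claimed stronger inequality (the one without this term) does not follow. When you pass from $\sup_{g\in\mc{G}_{B_1}}$ of the \emph{empirical} objective at the competitor $h$ back to a population quantity, the uniform deviation bound is of the form $c_3\delta_n\Vert I_0 g\Vert_2+c_3\delta_n^2$ at whatever $g$ attains the empirical sup, and $\Vert I_0 g\Vert_2$ there is not controlled a priori. The only way to absorb it is AM--GM against the quadratic penalty, $c_3\delta_n\Vert I_0 g\Vert_2\le \frac{c^2}{2}\Vert I_0 g\Vert_2^2+\frac{c_3^2}{2c^2}\delta_n^2$, which halves the effective coefficient of $g^2$ in the population functional from $c^2$ to $c^2/2$; the resulting population max is then $\frac{1}{2c^2}\Vert I_0\mc{T}(h_0^\dag-h)\Vert_2^2$ rather than $\frac{1}{4c^2}\Vert I_0\mc{T}(h_0^\dag-h)\Vert_2^2$. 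That doubling is precisely where the extra term comes from, and it is unavoidable by this route.

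Second, your handling of $g\in\mc{G}\setminus\mc{G}_{B_1}$ is asserted rather than proved, and the verification strategy you propose (show the near-optimizer of the inner max has RKHS norm $O(B_1)$) is not how the argument closes and is not something the hypotheses deliver: the empirical maximizer over all of $\mc{G}$ need not have bounded RKHS norm. The paper instead splits the empirical sup into $\mc{G}_{B_1}$ and $\mc{G}\setminus\mc{G}_{B_1}$ and, for $\Vert g\Vert_\mc{G}>B_1$, rescales to $\tilde g:=B_1 g/\Vert g\Vert_\mc{G}\in\mc{G}_{B_1}$, exploits the different homogeneity degrees of the linear and quadratic residual terms (factors $\Vert g\Vert_\mc{G}/B_1$ and $\Vert g\Vert_\mc{G}^2/B_1^2$), and then uses the condition $\lambda_\mc{G}\ge 157c^2\delta_n^2/B_1^2$ so that the $-\lambda_\mc{G}\Vert g\Vert_\mc{G}^2$ penalty dominates the $\Vert g\Vert_\mc{G}^2\delta_n^2/B_1^2$ error terms produced by the rescaling. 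Your sketch mentions the threshold on $\lambda_\mc{G}$ but does not supply this rescaling/homogeneity argument, which is the actual content of extending Lemma 10 of \citet{bennett2023source} to $\lambda_\mc{G}\ne 0$. Finally, note that the uniform (rather than pointwise) concentration over $\mc{G}_{B_1}$ is also needed on the $\tilde h$ side, since $g_{\tilde h}=(2c^2)^{-1}\mc{T}(h_0^\dag-\tilde h)$ is random; your proposal treats this as a fixed-function evaluation.
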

\begin{proof}
For any $h\in\mc{H}_{B}$, define $g_h:=\frac{1}{2c^2}\mc{T}(h_0^\dag-h)$. Since $\Vert h_0^\dag-h\Vert_{\mc{H}}\le\Vert h_0^\dag\Vert_{\mc{H}}+\Vert h\Vert_{\mc{H}}\le 2B$, Assumption~\ref{ass:closeness} of the main text implies that $g_h\in\mc{G}_{B_1}$ where $B_1:=B'/c^2$. In addition, for any $h\in\mc{H}_{B}$, we have $\sup_{w\in\mc{W}}|h(w)|\le k_1 B $, so the function class $\mc{H}_B$ is uniformly bounded by $k_1 B$. Similarly, the function class $\mc{G}_{B_1}$ is uniformly bounded by $k_2B_1$.
 
 By part 1 of Proposition~\ref{prop:innermax} and Assumption~\ref{ass:closeness}, we have
 \[
  \frac{1}{4c^2}\big\Vert I_0 \mc{T} \big(h_0^\dag-\tilde h\big)\big\Vert^2_2= \E_{P_0}\big\{I_0(X)s(O)g_{\tilde h}(Z)-I_0(X)r(O)\tilde h(W)g_{\tilde h}(Z)-c^2\cdot I_0(X)g_{\tilde h}(Z)^2\big\},
 \]
 where the expectation in the right hand-side is with respect to $O$, and $\tilde h$ is regarded as a fixed, non-random function.

Next, we apply Lemma~\ref{lemma:lemma14foster} to bound the expectation of each of the three terms in the right hand side of the last display. Specifically, with $c_1:=\left\{8b^2(1+17e)\right\}^{-1}$, $c_2:=6L'/b$, $b:=k_2B_1$, $\xi(\delta_n):=\{4+2I\{\delta_n<5b/6\}\lfloor\{\log(6/5)\}^{-1}\log(b/\delta_n)\rfloor\}\:\exp(-c_1n\delta_n^2)$, and
\begin{equation}
    L'=\max\{\Vert s\Vert_\infty,\:\Vert r\Vert_\infty\cdot k_1B,\: 2c^2\cdot b\},\label{def:Lmax}
\end{equation}
applying Lemma~\ref{lemma:lemma14foster}, we can conclude that
\begin{enumerate}
    \item[I.] With probability at least $1-\xi(\delta_n)$, the following event happens
\begin{equation}
    \Big\vert \E_n\big[I_o(X)s(O)g(Z) - \E_{P_0}\left\{I_0(X)s(O)g(Z)\right\} \big]\Big \vert \le c_2\delta_n\left\{\left\Vert I_0g\right\Vert_2+\delta_n\right\} \quad \forall {g \in \mc{G}_{B_1}}. \label{eq:term1sup}
\end{equation}
In particular, $\E_{P_0}\{I_0(X)s(O)g_{\tilde h}(Z)\}\le \E_n\{I_0(X)s(O)g_{\tilde h}(Z)\}+ c_2\delta_n\{\Vert I_0g_{\tilde h}\Vert_2+\delta_n\}$ holds with probability at least $1-\xi(\delta_n)$.

\item[II.] With probability at least $1-\xi(\delta_n)$, for all $g \in \mc{G}_{B_1}$, the following event happens
\begin{equation}
    \Big\vert \E_n\big[I_0(X)r(O)\tilde h(W)g(Z) - \E_{P_0}\{I_0(X)r(O)\tilde h(W)g(Z)\} \big]\Big \vert \le c_2\delta_n\{\left\Vert I_0g\right\Vert_2+\delta_n\}. \label{eq:term2sup}
\end{equation}
Notably, $-\E_{P_0}\{I_0(X)r(O)\tilde h(W)g_{\tilde h}(Z)\}\le -\E_n\{I_0(X)r(O)\tilde h(W)g_{\tilde h}(Z)\}+ c_2\delta_n\{\Vert I_0g_{\tilde h}\Vert_2+\delta_n\}$ holds with probability at least $1-\xi(\delta_n)$.
\item[III.]  With probability at least $1-\xi(\delta_n)$, the following event happens
\begin{equation}
    \Big\vert \E_n\big[c^2 I_0(X)g(Z)^2 - \E_{P_0}\{c^2 I_0(X)g(Z)^2\} \big]\Big \vert \le c_2\delta_n\{\Vert I_0g\Vert_2+\delta_n\} \quad \forall {g \in \mc{G}_{B_1}}. \label{eq:term3sup}
\end{equation}
In particular, $ -\E_{P_0}\{c^2 I_0(X)g_{\tilde h}(Z)^2\}\le -\E_n\{c^2 I_0(X)g_{\tilde h}(Z)^2\}+ c_2\delta_n\{\Vert I_0g_{\tilde h}\Vert_2+\delta_n\}$ holds with probability at least $1-\xi(\delta_n)$.
\end{enumerate}

To apply Lemma~\ref{lemma:lemma14foster} to prove (I), (II), and (III), we define convenient function classes $\mc{F}$, maps $f \in \mc{L}^2(P_{0,O})\mapsto t_f$, and sets $\mc{O}_0$. For these objects, we argue that the following conditions hold:

 \begin{enumerate}
    \item[a.] $\mc{F}$ is a separable metric space with metric denoted by $d$ such that $\mc{F}\subset\mc{L}^2(P_{0,O})$ and: (a.1) $\mc{F}$ is uniformly bounded by $b$, (a.2) for each $o\in\mc{O}$, the evaluation functional $f\mapsto f(o)$ is continuous relative to the metric $d$, and (a.3) $f=0$ belongs to $\mc{F}$.
    \item[b.] For each $f\in\mc{L}^2(P_{0,O})$, $t_f:\mc{O}\mapsto\mathbb{R}$ is a measurable function.
    \item[c.] For each $o\in\mc{O}$, the map $f\mapsto t_{I_{\mc{O}_0}\cdot f}(o)$ is continuous with respect to $d$ at all $f \in \mc{F}$.
    \item[d.] The map $f\mapsto t_{I_{\mc{O}_0}\cdot f}$ is continuous from $(\mc{F},d)$ to $\mc{L}^1(P_{0,O})$. 
    \item[e.] $t_f$ satisfies $t_f=0$ when $f=0$.
    \item[f.] $\exists L$ such that for any $f_1,f_2\in\mc{L}^2(P_{0,O})$, $|t_{f_1}(o)-t_{f_2}(o)|\le L\left\vert f_1(o)-f_2(o)\right\vert$ $\forall\:o\in\mc{O}$.
\end{enumerate}

\textit{(i) Application of Lemma \ref{lemma:lemma14foster} to show (I)}. Define $\mc{F}:=\mc{G}_{B_1}$, $t_f(o):=s(o)\cdot f(z)$, and $\mc{O}_0:=\{o\in\mc{O}\: :\:\text{the subvector }x\text{ of }o\text{ is in }\mc{X}_0\}$. We endow $\mc{F}$ with the metric $d$ associated with the norm $\Vert\cdot\Vert_{\mc{G}}$ of the RKHS $\mc{G}$. To show that condition (a) holds first note that $\mc{F}= \mc{G}_{B_1}\subset\mc{L}^2(P_{0,Z})\subset\mc{L}^2(P_{0,O})$, and $\mc{G}_{B_1}$ is separable because $\mc{G}$ is a separable RKHS by Assumption~\ref{ass:K_G} (see Proposition 11.7 of \cite{paulsen2016introduction}). We already argued that (a.1) holds with $b=k_2B_1$, (a.2) holds because the evaluation functionals are continuous in RKHSs, and (a.3) trivially holds.

Condition (b) is trivially satisfied. Condition (c) holds because $ |t_{I_{\mc{O}_0}\cdot f_1}(o)-t_{I_{\mc{O}_0}\cdot f_2}(o)|=|I_{\mc{O}_0}(o)s(o)\left\{f_1(z)-f_2(z)\right\}|\le \Vert s\Vert_\infty |f_1(z)-f_2(z)|\le  \Vert s\Vert_\infty k_2\Vert f_1-f_2\Vert_{\mc{G}}$, where the last inequality holds because $\Vert f\Vert_\infty\le k_2\Vert f\Vert_\mc{F}$ $\forall f\in\mc{F}$. Condition (d) holds because $\E_{P_0}\{|t_{I_{\mc{O}_0}\cdot f_1}(O)-t_{I_{\mc{O}_0}\cdot f_2}(O)|\}\le\sup_{o\in\mc{O}}|t_{I_{\mc{O}_0}\cdot f_1}(o)-t_{I_{\mc{O}_0}\cdot f_2}(o)|\le  \Vert s\Vert_\infty\cdot k_2\cdot \Vert f_1-f_2\Vert_{\mc{G}}$. Condition (e) trivially holds. Arguing as in the proof of (c), condition (f) holds with $L=\Vert s\Vert_{\infty}$.

Before applying Lemma~\ref{lemma:lemma14foster}, note that $I_0\cdot\mc{G}_{B_1}$ is star-shaped because $\mc{G}_{B_1}$ is star-shaped and multiplication by $I_0$ preserves this. Hence, its critical radius equals that of its star-shaped hull, so the lemma applies with an upper bound on the critical radius of $I_0\cdot\mc{G}_{B_1}$.

 Applying Lemma~\ref{lemma:lemma14foster}, we obtain that the display~\eqref{eq:term1sup} happens with probability at least $1-\xi(\delta_n)$ with $c_2=6\cdot \Vert s\Vert_{\infty}/b$ and $b=k_2B_1$. Consequently, the display~\eqref{eq:term1sup} also happens with probability at least $1-\xi(\delta_n)$ with $c_2$ now equal to $6L'/b$ where $L'$ is defined in~\eqref{def:Lmax}.
\smallskip

\textit{(ii) Application of Lemma~\ref{lemma:lemma14foster} to show (II)}. Define $\mc{F}$, the metric $d$, and $\mc{O}_0$ as in (i). Define $t_f(o):=r(o)\cdot \tilde h(w)\cdot f(z)$. Condition (a) holds as shown in (i). Condition (b) holds trivially. Conditions (c) and (d) hold as shown in (i), using in addition that $\sup_{w\in\mc{W}}|\tilde h(w)|\le k_1B$. Condition (e) is immediate. Arguing as in the proof of condition (c) in (i), (f) holds with $L=\Vert r\Vert_{\infty}\cdot k_1B$. Applying Lemma \ref{lemma:lemma14foster}, we obtain that the display~\eqref{eq:term2sup} happens with probability at least $1-\xi(\delta_n)$ with $c_2=6 \Vert r\Vert_{\infty} k_1B/b$ and $b=k_2B_2$. Then, \eqref{eq:term2sup} also happens with probability at least $1-\xi(\delta_n)$ with $c_2$ now equal to $6L'/b$ where $L'$ is defined in~\eqref{def:Lmax}.
\smallskip

\textit{(iii) Application of Lemma \ref{lemma:lemma14foster} to show (III)}. Define $\mc{F}$, $d$, and $\mc{O}_0$ as in (i). Define $t_f(o):=c^2\cdot f(z)^2$. Condition (a) holds as shown in (i), (b) holds trivially, (c) holds because $  |t_{I_{\mc{O}_0}\cdot f_1}(o)-t_{I_{\mc{O}_0}\cdot f_2}(o)|=|c^2I_{\mc{O}_0}(o)\left\{f_1(z)^2-f_2(z)^2\right\}|
    \le c^2 \left\vert f_1(z)+f_2(z)\right\vert\cdot\left\vert f_1(z)-f_2(z)\right\vert
    \le 2c^2 k_2 B_1\left\vert f_1(z)-f_2(z)\right\vert\le  2c^2 k_2^2 B_1\Vert f_1-f_2\Vert_{\mc{G}}$, and (d) holds because $\E_{P_0}\{|t_{I_{\mc{O}_0}\cdot f_1}(O)-t_{I_{\mc{O}_0}\cdot f_2}(O)|\}\le\sup_{o\in\mc{O}}|t_{I_{\mc{O}_0}\cdot f_1}(o)-t_{I_{\mc{O}_0}\cdot f_2}(o)|\le  2c^2\cdot k_2^2\cdot B_1\cdot \Vert f_1-f_2\Vert_{\mc{G}}$. Condition (e) is trivial and condition (f) holds with $L=2c^2 b$ arguing as in the proof of (c).

 Applying Lemma \ref{lemma:lemma14foster}, we obtain that the display~\eqref{eq:term3sup} happens with probability at least $1-\xi(\delta_n)$ with $c_2=6\cdot 2c^2\cdot b/b$ and $b=k_2B_1$. Consequently, the display~\eqref{eq:term3sup} also happens with probability at least $1-\xi(\delta_n)$ with $c_2$ now equal to $6L'/b$ where $L'$ is defined in~\eqref{def:Lmax}.

Setting $c_3=3c_2=18L'/b$ with $L'$ defined in~\eqref{def:Lmax}, with probability at least $1-3\xi(\delta_n)$, we have $\frac{1}{4c^2}\Vert I_0\mc{T}(h_0^\dag-\tilde h)\Vert^2_2\le\E_n\{I_0(X)s(O)g_{\tilde h}(Z)-I_0(X)r(O)\tilde h(W)g_{\tilde h}(Z)-c^2 I_0(X)g_{\tilde h}(Z)^2\}+c_3\delta_n\Vert I_0 g_{\tilde h}\Vert_2 +c_3\delta_n^2$.

Then, the following event happens with probability at least $1-3\xi(\delta_n)$, for any $h\in\mc{H}_B$:
\begin{align}
      \frac{1}{4c^2}\big\Vert I_0\mc{T}\big(&h_0^\dag-\tilde h\big)\big\Vert^2_2\le\E_n\big\{I_0(X)s(O)g_{\tilde h}(Z)-I_0(X)r(O)\tilde h(W)g_{\tilde h}(Z)-c^2 I_0(X)g_{\tilde h}(Z)^2\big\}\nonumber\\
    &\quad-\lambda_{\mc{G}}\Vert g_{\tilde h}\Vert^2_{\mc{G}}+\lambda_{\mc{G}}\Vert g_{\tilde h}\Vert^2_{\mc{G}}+c_3\big(\delta_n\Vert I_0\cdot g_{\tilde h}\Vert_2 +\delta_n^2\big)\nonumber\\
    &\le \sup_{g\in\mc{G}}\big[\E_n\big\{I_0(X)s(O)g(Z)-I_0(X)r(O)\tilde h(W)g(Z)-c^2 I_0(X)g(Z)^2\big\}-\lambda_{\mc{G}}\Vert g\Vert_{\mc{G}}^2\big]\nonumber\\
    &\quad+\lambda_{\mc{G}}B_1^2+c_3\big(\delta_n\Vert I_0\cdot g_{\tilde h}\Vert_2 +\delta_n^2\big)\nonumber\\
    &\le\sup_{g\in\mc{G}}\big[\E_n\big\{I_0(X)s(O)g(Z)-I_0(X)r(O)h(W)g(Z)-c^2 I_0(X)g(Z)^2\big\}-\lambda_{\mathcal{G}}\Vert g\Vert_{\mathcal{G}}^2\big]\nonumber\\
&\quad+\lambda_{\mc{G}}B_1^2+\lambda_{\mc{H}}\Big(\big\Vert h\big\Vert_{\mc{H}}^2-\big\Vert \tilde h\big\Vert_{\mc{H}}^2\Big)+c_3\big(\delta_n\Vert I_0\cdot g_{\tilde h}\Vert_2 +\delta_n^2\big).\label{last_line_emr_step}
\end{align}

Note that the second inequality is always true because $g_{\tilde h}\in \mc{G}_{B_1}$, and the third inequality is also true because $\tilde h$ is the empirical risk minimizer defined in~\eqref{def:h_tilde}.

Set $c_4:=\max\{c_3[1+c_3/(2c^2)], 5c_2^2/c^2\}$. For any $h\in\mc{H}_B$, we define
\begin{align*}
    T_n(h)&:=\sup_{g\in\mc{G}}\E_{P_0}\big\{I_0(X)s(O)g(Z)-I_0(X)r(O)h(W) g(Z)-\frac{c^2}{2}\cdot I_0(X) g(Z)^2\big\}+c_4\delta_n^2,\\
    A_n(h)&:=\sup_{g\in\mc{G}_{B_1}}\left[\E_n\big\{I_0(X)s(O)g(Z)-I_0(X)r(O)h(W)g(Z)-c^2 I_0(X)g(Z)^2\big\}-\lambda_{\mathcal{G}}\Vert g\Vert_{\mathcal{G}}^2\right]\\
    B_n(h)&:=\sup_{g\in\mc{G}\setminus\mc{G}_{B_1}}\left[\E_n\big\{I_0(X)s(O)g(Z)-I_0(X)r(O)h(W)g(Z)-c^2 I_0(X)g(Z)^2\big\}-\lambda_{\mathcal{G}}\Vert g\Vert_{\mathcal{G}}^2\right],\\
    \text{and}\quad&\\
    C_n(h)&:= \sup_{g\in\mc{G}}\left[\E_n\big\{I_0(X)s(O)g(Z)-I_0(X)r(O)h(W)g(Z)-c^2\cdot I_0(X)g(Z)^2\big\}-\lambda_{\mathcal{G}}\Vert g\Vert_{\mathcal{G}}^2\right].
\end{align*}

Below we will show that, with probability at least $1-3\xi(\delta_n)$, each of the following event happens: $A_n(h)\le T_n(h)$ and $B_n(h)\le T_n(h)$. This will then imply that with probability at least $1-6\xi(\delta_n)$ for any $h\in\mc{H}_B$: 
\begin{align}
   C_n(h)\le T_n(h).\label{eq:emr_step_both}
\end{align}

Assume for now that the last assertion holds. By part 1 of Proposition~\ref{prop:innermax} and Assumption~\ref{ass:closeness}, we have that $\sup_{g\in\mc{G}}\E_{P_0}\{I_0(X)s(O)g(Z)-I_0(X)r(O)h(W)g(Z)-\frac{c^2}{2} I_0(X)g(Z)^2\}=\frac{1}{2c^2}\Vert I_0\mc{T}(h_0^\dag-h)\Vert^2_2$. Combining this equality with the inequalities in~\eqref{last_line_emr_step} and~\eqref{eq:emr_step_both}, we then conclude that with probability at least $1-9\xi(\delta_n)$, for any $h\in\mc{H}_B$:
\begin{align}
    \frac{1}{4c^2}\big\Vert I_0\mc{T}\big(h_0^\dag-\tilde h \big)\big\Vert^2_2\le&  \frac{1}{2c^2}\big\Vert I_0\mc{T}\big(h_0^\dag-h\big)\big\Vert^2_2+\lambda_{\mc{H}}\Big(\big\Vert h\big\Vert_{\mc{H}}^2-\big\Vert \tilde h\big\Vert_{\mc{H}}^2\Big)+\lambda_{\mc{G}}B_1^2\nonumber \\
    &\quad+c_3\cdot\delta_n\Vert  I_0\cdot g_{\tilde h}\Vert_2 +\left(c_3+c_4\right)\delta_n^2\label{ineq_aux}.
\end{align}

Furthermore, by the triangle inequality $\Vert I_0\mc{T}(h_0^\dag-\tilde h)\Vert_2\le \Vert I_0\mc{T}(h_0^\dag-h)\Vert_2+\Vert I_0\mc{T}(\tilde h-h)\Vert_2$ and since $\Vert I_0\cdot g_{\tilde h}\Vert_2=\frac{1}{2c^2}\Vert I_0\mc{T}(h_0^\dag-\tilde h)\Vert_2$, the inequality~\eqref{ineq_aux} implies that the following event happens with probability at least $1-9\xi(\delta_n)$ for any $h\in\mc{H}_B$
\begin{align*}
     \frac{1}{4c^2}\big\Vert I_0\mc{T}\big(h_0^\dag-\tilde h\big)\big\Vert^2_2\le&\frac{1}{2c^2}\big\Vert I_0\mc{T}\big(h_0^\dag-h\big)\big\Vert^2_2+\lambda_{\mc{H}}\Big(\big\Vert h\big\Vert_{\mc{H}}^2-\big\Vert \tilde h\big\Vert_{\mc{H}}^2\Big)+\lambda_{\mc{G}}B_1^2\\
    &+\frac{c_3}{2c^2}\Big\{\delta_n\big\Vert I_0\mc{T}\big(h_0^\dag-h\big)\big\Vert_2+\delta_n\big\Vert I_0\mc{T}\big(\tilde h-h\big)\big\Vert_2\Big\}+(c_3+c_4)\delta_n^2.
\end{align*}
By the AM-GM inequality $\frac{c_3}{2c^2}\delta_n \Vert I_0\mc{T}(h_0^\dag-h)\Vert_2\le\frac{1}{4c^2}\Vert I_0\mc{T}(h_0^\dag-h)\Vert_2^2+\frac{c_3^2}{4c^2}\delta_n^2$. Hence, the following event happens with probability at least $1-9\xi(\delta_n)$ for all $h\in\mc{H}_B$:
\begin{align*}
    \frac{1}{4c^2}&\Big\{\big\Vert I_0\mc{T}\big(h_0^\dag-\tilde h\big)\big\Vert^2_2-\big\Vert I_0\mc{T}\big(h_0^\dag-h\big)\big\Vert^2_2\Big\}+\lambda_{\mc{H}}\Big(\big\Vert \tilde h\big\Vert_{\mc{H}}^2-\big\Vert h\big\Vert_{\mc{H}}^2\Big)\\
    &\quad\quad\quad\le \frac{1}{2c^2}\big\Vert I_0\mc{T}\big(h_0^\dag-h\big)\big\Vert^2_2+\frac{c_3}{2c^2}\delta_n\big\Vert I_0\mc{T}\big(\tilde h-h\big)\big\Vert_2+\lambda_{\mc{G}}B_1^2+\Big(c_3+c_4+\frac{c_3^2}{4c^2}\Big)\delta^2_n,
\end{align*}
thus, arriving at the desired inequality.

We now turn to prove that $A_n(h)\le T_n(h)$ holds with probability at least $1-3\xi(\delta_n)$. Fix $h\in\mc{H}_B$ and take $g\in\mc{G}_{B_1}$. Applying Lemma~\ref{lemma:lemma14foster} as we did to show (I), (II), and (III), and noting that $-\lambda_{\mathcal{G}}\Vert g\Vert_{\mathcal{G}}^2\le0$, we have that the following event happens with probability at least $1-3\xi(\delta_n)$, 
\begin{align}
&\E_n\big\{I_0(X)s(O)g(Z)-I_0(X)r(O)h(W)g(Z)-c^2I_0(X)g(Z)^2\big\}-\lambda_{\mathcal{G}}\Vert g\Vert_{\mathcal{G}}^2\label{eq:bound_case1}\\
    &\le E_{P_0}\big\{I_0(X)s(O)g(Z)-I_0(X)r(O)h(W)g(Z)-c^2I_0(X) g(Z)^2\big\}+c_3\delta_n\Vert I_0\cdot  g\Vert_2+c_3\delta_n^2.\nonumber
\end{align}

By the AM-GM inequality, $c_3\delta_n\Vert I_0\cdot g\Vert_2\le \frac{c^2}{2}\Vert I_0\cdot g\Vert_2^2+\frac{c_3^2}{2c^2}\delta_n^2$. Hence, the right-hand side of (\ref{eq:bound_case1}) is upper bounded by
\begin{align*}
    \sup_{g\in\mc{G}}\E_{P_0}\Big\{I_0(X)s(O)g(Z)-I_0(X)r(O)h(W)g(Z)-\frac{c^2}{2} I_0(X)g(Z)^2\Big\}+\left(c_3+\frac{c^2_3}{2c^2}\right)\delta_n^2\le T_n(h).
\end{align*}
Since $g\in\mc{G}_{B_1}$ was arbitrary, $A_n(h)\le T_n(h)$ happens with probability at least $1-3\xi(\delta_n)$.

Turn now to prove that $B_n(h)\le T_n(h)$ holds with probability at least $1-3\xi(\delta_n)$. Fix $h\in\mc{H}_B$ and take $g\in\mc{G}\setminus\mc{G}_{B_1}$. Then, $\Vert g\Vert_{\mc{G}}>B_1$. Define $\tilde g:=B_1\cdot  g/\Vert g\Vert_{\mc{G}}$. Since $\mc{G}$ is star-shaped and $B_1/\Vert g\Vert_{\mc{G}}\in(0,1)$, $\tilde g\in\mc{G}$. Furthermore, $\Vert\tilde g\Vert_{\mc{G}}=B_1$, and so $\tilde g\in\mc{G}_{B_1}$. Applying Lemma~\ref{lemma:lemma14foster} as we did to prove (I) and (II), we obtain that, with probability at least $1-2\xi(\delta_n)$,  the following happens:
\begin{align*}
 \E_n\big\{I_0(X)&s(O)g(Z)-I_0(X)r(O)h(W)g(Z)\big\}\\
    &=\frac{\Vert g\Vert_\mc{G}}{B_1}\cdot \E_n\big\{I_0(X)s(O)\tilde g(Z)-I_0(X)r(O)h(W)\tilde g(Z)\big\}\\
    &\le \frac{\Vert g\Vert_\mc{G}}{B_1}\cdot \big[\E_{P_0}\big\{I_0(X)s(O)\tilde g(Z)-I_0(X)r(O)h(W)\tilde g(Z)\big\}+2c_2\delta_n\Vert I_0\cdot \tilde g\Vert_2+2c_2\delta_n^2\big]\\
    &=\E_{P_0}\big\{I_0(X)s(O)g(Z)-I_0(X)r(O)h(W)g(Z)\big\}+2c_2\delta_n\Vert I_0\cdot g\Vert_2+\frac{2c_2}{B_1}\Vert g\Vert_\mc{G}\cdot\delta_n^2,
\end{align*}
where $c_2=6L'/b$, with $L'$ defined in~\eqref{def:Lmax}. Applying Lemma~\ref{lemma:lemma14foster} as we did to prove (III), with probability at least $1-\xi(\delta_n)$, the following event happens:
\begin{align*}
    - \E_n\big\{c^2 I_0(X) g(Z)^2\big\}&=-\frac{\Vert g\Vert_\mc{G}^2}{B_1^2} \E_n\big\{c^2 I_0(X)\tilde g(Z)^2\big\}\\
    &\le \frac{\Vert g\Vert_\mc{G}^2}{B_1^2} \left[-\E_{P_0}\big\{c^2I_0(X)\tilde g(Z)^2\big\}+12c^2\delta_n\left(\Vert I_0\cdot \tilde g\Vert_2+\delta_n\right)\right]\\
    &=-\E_{P_0}\big\{c^2 I_0(X) g(Z)^2\big\}+\frac{12c^2}{B_1}\delta_n\Vert g\Vert_\mc{G}\cdot \Vert I_0\cdot  g\Vert_2+\frac{12c^2}{ B_1^2}\Vert g\Vert_\mc{G}^2\cdot\delta_n^2.
\end{align*}

Hence, with probability at least $1-3\xi(\delta_n)$, the following event happens:
\begin{align}
\E_n&\big\{I_0(X)s(O)g(Z)-I_0(X)r(O)h(W)g(Z)-c^2 I_0(X)g(Z)^2\big\}-\lambda_{\mathcal{G}}\Vert g\Vert_{\mathcal{G}}^2\nonumber\\
    &\le \E_P\big\{I_0(X)s(O)g(Z)-I_0(X)r(O)h(W)g(Z)-c^2 I_0(X) g(Z)^2\big\}-\lambda_{\mc{G}}\Vert g\Vert_{\mc{G}}^2\label{eq:bound_case2}\\
    &\quad+2c_2\delta_n\Vert I_0\cdot g\Vert_2+\frac{2c_2}{B_1}\Vert g\Vert_\mc{G}\cdot\delta_n^2+\frac{12c^2}{B_1}\delta_n\Vert g\Vert_\mc{G}\cdot \Vert I_0\cdot  g\Vert_2+\frac{12c^2}{ B_1^2}\Vert g\Vert_\mc{G}^2\cdot\delta_n^2.\nonumber
\end{align}

Applying the AM-GM inequality, we obtain (1) $ 2c_2\delta_n\Vert I_0\cdot g\Vert_2\le \frac{c^2}{4}\Vert I_0\cdot g\Vert_2^2+\frac{4c_2^2}{c^2}\delta_n^2$, (2) $\frac{2c_2}{B_1}\Vert g\Vert_\mc{G}\cdot\delta_n^2\le \frac{c_2^2}{c^2}\delta_n^2+\frac{c^2}{B_1^2}\Vert  g\Vert_\mc{G}^2\cdot\delta_n^2$, and (3) $\frac{12c^2}{B_1}\delta_n\Vert g\Vert_\mc{G}\cdot \Vert I_0\cdot  g\Vert_2\le \frac{c^2}{4}\Vert I_0\cdot g\Vert_2^2+\frac{144c^2}{B_1^2}\Vert g\Vert_\mc{G}^2\cdot\delta_n^2.$
Using these inequalities and the fact that $\lambda_\mc{G}\ge 157c ^2\cdot\delta_n^2/B_1^2$, the right-hand side of~\eqref{eq:bound_case2} is upper bounded by $ E_{P_0}\{I_0(X)s(O)g(Z)-I_0(X)r(O)h(W)g(Z)-\frac{c^2}{2} I_0(X) g(Z)^2\}+\frac{5c_2^2}{c^2}\delta_n^2\le\sup_{g\in\mc{G}} E_{P_0}\big\{m(O; g)-I_0(X)r(O)h(W)g(Z)-\frac{c^2}{2} I_0(X) g(Z)^2\big\}+\frac{5c_2^2}{c^2}\delta_n^2\le T_n(h)$. Thus, $B_n(h)\le T_n(h)$ holds with probability at least $1-3\xi(\delta_n)$. This completes the proof.
\end{proof}

We now state a more general version of Lemma~\ref{lemma:erm} that applies for an arbitrary measurable map $m(o;g)$, linear in $g$, that satisfies Assumption~\ref{ass:bounded}.

\begin{lemma} Under the assumptions of Theorem~\ref{theo:main} and with $\lambda_\mc{G}\ge 157c^2/B_1^2$, with probability at least $1-\{36+18\cdot I\{\delta_n<5b/6\}\lfloor\{\log(6/5)\}^{-1} \log(b/\delta_n)\rfloor\}\exp(-c_1n\delta_n^2)$, the following event happens for any $h\in \mc{H}_{B}$:
\begin{align*}
    \frac{1}{4c^2}\Big\{\big\Vert I_0 \mc{T} \big(h_0^\dag-&\tilde h\big)\big\Vert^2_2-\big\Vert I_0\mc{T} \big(h_0^\dag-h\big)\big\Vert^2_2\Big\}+\lambda_{\mc{H}}\Big(\big\Vert \tilde h\big\Vert_{\mc{H}}^2-\big\Vert h\big\Vert_{\mc{H}}^2\Big)\\
     &\quad\quad\quad\le \frac{1}{2c^2}\big\Vert I_0\mc{T} \big(h_0^\dag-h\big)\big\Vert^2_2+c_0\delta_n\big\Vert  I_0\mc{T}\big(\tilde h-h\big)\big\Vert_2+\lambda_{\mc{G}}B_1^2+c_0'\delta^2_n,
\end{align*}
where $\tilde h=\arg\min_{h\in\mc{H}_{B}}\max_{g\in\mc{G}}\{\mathbb{E}_n[\{m(O;g) -I_0(X)r(O)h(W)g(Z)-c^2I_0(X)g(Z)^2\}]-\lambda_{\mc{G}}\Vert g\Vert_{\mc{G}}^2+\lambda_{\mc{H}}\Vert h\Vert_{\mc{H}}^2\}$, $c_0:=3\cdot\max\{3,B_2+2\}\cdot \max\{1\:\Vert r\Vert_\infty\cdot k_1B,\: 2c^2b\}/(c^2 b)$, $b=B_1\max\{k_2,B_3\}$, with $B_2$ and $B_3$ the constants in Assumption~\ref{ass:bounded}, and $c_1$ and $c'_0$ are defined as in Lemma~\ref{lemma:erm}.\label{lemma:erm_mg}
\end{lemma}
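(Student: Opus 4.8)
The plan is to follow the proof of Lemma~\ref{lemma:erm} essentially verbatim, replacing the summand $\E_n\{I_0(X)s(O)g(Z)\}$ and its population counterpart everywhere by $\E_n\{m(O;g)\}$ and $\E_{P_0}\{m(O;g)\}$, and substituting the two parts of Assumption~\ref{ass:bounded} for the hypothesis $\|s\|_\infty<\infty$. First I would fix $h\in\mc{H}_B$, set $g_h:=(2c^2)^{-1}\mc{T}(h_0^\dag-h)$, so that Assumption~\ref{ass:closeness} forces $g_h\in\mc{G}_{B_1}$, and record uniform bounds on the relevant classes: $\mc{H}_B$ is uniformly bounded by $k_1B$, $\mc{G}_{B_1}$ by $k_2B_1$, and---the one genuinely new elementary fact---$m\circ\mc{G}_{B_1}$ by $B_3B_1$, since for $g\in\mc{G}_{B_1}$ part~2 of Assumption~\ref{ass:bounded} applied with $m(\cdot;0)=0$ gives $\|m(\cdot;g)\|_\infty\le B_3\,d(m(\cdot;g),0)\le B_3\|g\|_\mc{G}\le B_3B_1$ (here $m(\cdot;0)=0$ by linearity of $g\mapsto m(o;g)$). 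Thus every class in sight is uniformly bounded by $b:=B_1\max\{k_2,B_3\}$. As in Lemma~\ref{lemma:erm}, part~1 of Proposition~\ref{prop:innermax} and Assumption~\ref{ass:closeness} yield $\tfrac{1}{4c^2}\|I_0\mc{T}(h_0^\dag-\tilde h)\|_2^2=\E_{P_0}\{m(O;g_{\tilde h})-I_0(X)r(O)\tilde h(W)g_{\tilde h}(Z)-c^2I_0(X)g_{\tilde h}(Z)^2\}$.

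Next I would bound the empirical fluctuations of the three summands by three applications of Lemma~\ref{lemma:lemma14foster}. For the second and third summands the choices are identical to parts (ii)--(iii) of the proof of Lemma~\ref{lemma:erm}: index class $\mc{F}=\mc{G}_{B_1}$, set $\mc{O}_0=\{o:x\in\mc{X}_0\}$, maps $t_f(o)=r(o)\tilde h(w)f(z)$ resp.\ $t_f(o)=c^2f(z)^2$, with Lipschitz constants $\|r\|_\infty k_1B$ resp.\ $2c^2b$, so the relevant Rademacher term is that of $I_0\cdot\mc{G}_{B_1}$, whose critical radius $\delta_n$ bounds by hypothesis. For the first summand I would instead take the index class to be $m\circ\mc{G}_{B_1}$ itself, equipped with the metric $d$ of Assumption~\ref{ass:bounded}, with $\mc{O}_0=\mc{O}$ (so $I_{\mc{O}_0}\equiv 1$) and $t_f=f$ the identity; this is necessary because Assumption~\ref{ass:bounded} controls $m(\cdot;g)-m(\cdot;g')$ only through $\|g-g'\|_\mc{G}$ and $d$, not pointwise through $g-g'$, so one cannot take $\mc{F}=\mc{G}_{B_1}$ here. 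All hypotheses of Lemma~\ref{lemma:lemma14foster} then hold: $m\circ\mc{G}_{B_1}$ is separable under $d$ and uniformly bounded by $b$, the evaluation functionals are $d$-continuous because $|f_1(o)-f_2(o)|\le B_3\,d(f_1,f_2)$, $0\in m\circ\mc{G}_{B_1}$ with $t_0=0$, and $t$ is trivially $1$-Lipschitz; the Rademacher term is now that of $star(m\circ\mc{G}_{B_1})$, again bounded by $\delta_n$ by hypothesis. Lemma~\ref{lemma:lemma14foster} then gives $|\E_n\{m(O;g)\}-\E_{P_0}\{m(O;g)\}|\le\tfrac{6\delta_n}{b}\{\|m(\cdot;g)\|_2+\delta_n\}$ uniformly over $g\in\mc{G}_{B_1}$, which I would convert to a bound in $\|I_0g\|_2$ via part~1 of Assumption~\ref{ass:bounded}: $\|m(\cdot;g)\|_2\le B_2\|I_0g\|_2$. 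Summing the three bounds and absorbing constants---using $B_2+\|r\|_\infty k_1B+2c^2b\le B_2+2L'$ and $(B_2+2L')\le(B_2+2)L'$ since $L':=\max\{1,\|r\|_\infty k_1B,2c^2b\}\ge 1$---produces a fluctuation bound of the form $c_3\delta_n\{\|I_0g_{\tilde h}\|_2+\delta_n\}$ with $c_3=6\max\{3,B_2+2\}L'/b$, valid with probability at least $1-3\xi(\delta_n)$, where $\xi(\delta_n)$ is exactly as in the proof of Lemma~\ref{lemma:erm} with the updated $b$ and $c_1=\{8b^2(1+17e)\}^{-1}$.

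From here the rest of the proof of Lemma~\ref{lemma:erm} transfers line for line: inserting $\pm\lambda_\mc{G}\|g_{\tilde h}\|_\mc{G}^2$ and using $\|g_{\tilde h}\|_\mc{G}\le B_1$, then invoking the optimality of $\tilde h$ in~\eqref{def:h_tilde}, reduces matters to establishing $C_n(h)\le T_n(h)$ for all $h\in\mc{H}_B$, which follows from $A_n(h)\le T_n(h)$ and $B_n(h)\le T_n(h)$, defined as in Lemma~\ref{lemma:erm} with the obvious substitution of $m(O;g)$ for $I_0(X)s(O)g(Z)$. The $A_n$-comparison is proved by three further applications of Lemma~\ref{lemma:lemma14foster} as before; the $B_n$-comparison treats $g$ with $\|g\|_\mc{G}>B_1$ by rescaling to $\tilde g:=B_1g/\|g\|_\mc{G}\in\mc{G}_{B_1}$, using linearity of $g\mapsto m(o;g)$ to write $m(O;g)=(\|g\|_\mc{G}/B_1)\,m(O;\tilde g)$, applying the three concentration bounds to $\tilde g$, and absorbing the $\|g\|_\mc{G}$- and $\|g\|_\mc{G}^2$-weighted error terms via AM--GM together with the stated lower bound on $\lambda_\mc{G}$. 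A final AM--GM step on $\delta_n\|I_0\mc{T}(h_0^\dag-h)\|_2$ and the triangle inequality $\|I_0\mc{T}(h_0^\dag-\tilde h)\|_2\le\|I_0\mc{T}(h_0^\dag-h)\|_2+\|I_0\mc{T}(\tilde h-h)\|_2$ yield the claimed inequality with $c_0=c_3/(2c^2)=3\max\{3,B_2+2\}L'/(c^2b)$ and $c_0'$ as in Lemma~\ref{lemma:erm}; since Lemma~\ref{lemma:lemma14foster} is invoked nine times in total, the union bound gives probability at least $1-9\xi(\delta_n)=1-\{36+18\,I\{\delta_n<5b/6\}\lfloor\{\log(6/5)\}^{-1}\log(b/\delta_n)\rfloor\}\exp(-c_1n\delta_n^2)$. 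The main obstacle is not conceptual but bookkeeping: verifying that the general map $m$ genuinely meets the Carath\'eodory/separable-metric-space hypotheses of Lemma~\ref{lemma:lemma14foster} through the metric $d$ supplied by Assumption~\ref{ass:bounded}, and carrying the dependence on $B_2$ and $B_3$ cleanly through the cascade of AM--GM estimates so that the final constants $b$, $c_0$, $c_0'$ match those in the statement.
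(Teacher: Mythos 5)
Your proposal is correct and follows essentially the same route as the paper's proof: adapt Lemma~\ref{lemma:erm}, handle the $\E_n\{m(O;g)\}$ term by applying Lemma~\ref{lemma:lemma14foster} to the index class $m\circ\mc{G}_{B_1}$ with $t_f=f$, $I_{\mc{O}_0}\equiv 1$, and the metric $d$ from Assumption~\ref{ass:bounded}, then convert $\Vert m(\cdot;g)\Vert_2\le B_2\Vert I_0 g\Vert_2$ via part~1 of that assumption. Your constants $b=B_1\max\{k_2,B_3\}$, $c_3=6\max\{3,B_2+2\}L'/b$, $c_0=c_3/(2c^2)$, and the probability $1-9\xi(\delta_n)$ all match the paper's.
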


\begin{proof}
The proof proceeds similarly to that of Lemma~\ref{lemma:erm}, with minor changes to handle the terms  $\E_{P_0}\{m(O;g)\}$ and $\E_n\{m(O;g)\}$. Here, $\xi(\delta_n)$ is as defined in~\ref{lemma:erm}, and the constants $c_1$ and $c_2$ are as in Lemma~\ref{lemma:erm}, except that $L'$ and $b$ are now $L' := \max\left\{1,\; \|r\|_\infty \cdot k_1 B,\; 2c^2 \cdot b \right\}$ and $ b:=B_1\max\{k_2,B_3\}$,
with $B_3$ being the constant specified in Assumption~\ref{ass:bounded}.

To use Lemma~\ref{lemma:lemma14foster} to bound $\E_{P_0}\{m(O;g_{\tilde h})\}$, define $\mc{F}:=\{f(\cdot) :=m(\cdot;g):\mc{O}\mapsto\mathbb{R}\:|\: g\in\mc{G}_{B_1}\}$, $t_f:=f$, and $\mc{O}_0:=\mc{O}\text{ (equivalently $I_{\mc{O}_0}=1$)}$. We verify conditions (a)--(f) listed in the proof of Lemma~\ref{lemma:erm}, which are required for Lemma~\ref{lemma:lemma14foster}. Condition (a) follows immediately from part 2 of Assumption~\ref{ass:bounded}.  Condition (a.3) holds because $g=0$ belongs to $\mc{G}_{B_1}$ and therefore $f_0:=0=m(\cdot,0)$ belongs to $\mc{F}$ by the linearity of $m$. Next, for any $f \in \mc{F}$, $f=m(\:\cdot\:,g_f)$ for some $g\in\mc{G}_{B_1}$. Then, 
    \begin{align*}
    |f(o)| = |f(o)-f_0(o)|\le B_3\cdot d(f,f_0)= B_3\cdot d(m(o;g_f),m(o,0))
    \le B_3\Vert g_f-0\Vert_\mc{G}\le B_3B_1,
    \end{align*} 
    where the first and second inequalities follow by part 2 of Assumption \ref{ass:bounded}, and the third holds because $g_f \in \mc{G}_{B_1}$. Thus, $\mc{F}$  is uniformly bounded by $B_3B_1\le b$, and so it satisfies condition (a.1). Condition (a.2) follows from part 2 of Assumption~\ref{ass:bounded}, which implies for all $o\in\mc{O}$ and $f_1,f_2\in\mc{F}$ that $ |f_1(o)-f_2(o)|\le\sup_{o\in\mc{O}}|f_1(o)-f_2(o)|\le B_3\cdot d(f_1,f_2)$. Condition (b) holds by the assumption on $m$ specified in Section \ref{sec:targer_param}. Condition (c) holds because $t_{I_{\mc{O}_0}\cdot f}=f$ and therefore (c) is the same as (a.2) that we have already argued to hold. Condition (d) holds because the last display implies $\E_{P_0}\{|f_1(O)-f_2(O)|\}\le\sup_{o\in\mc{O}}|f_1(o)-f_2(o)|\le  B_3\cdot d(f_1,f_2)$. Condition (e) trivially holds because $t_f=f$. Likewise, (f) trivially holds with $L=1\le L'$.

 By Lemma~\ref{lemma:lemma14foster}, $\E_{P_0}\{m(O;g_{\tilde h})\}\le \E_n\{m(O;g_{\tilde h})\}+ c_2\delta_n\{\Vert m(O;g_{\tilde h})\Vert_2+\delta_n\}$
happens with probability at least $1-\xi(\delta_n)$. Also, $\Vert m(O;g_{\tilde h})\Vert_2\le B_2\Vert I_0\cdot\tilde  g\Vert_2$ by part 1 of Assumption~\ref{ass:bounded}. Then, $\E_{P_0}\left\{m(O;g_{\tilde h})\right\}\le \E_n\left\{m(O;g_{\tilde h})\right\}+ B_2c_2\cdot\delta_n\left\Vert I_0\cdot g_{\tilde h}\right\Vert_2+c_2\delta^2_n$ holds with probability at least $1-\xi(\delta_n)$. Combining the latter with the events in (\ref{eq:term2sup}) and (\ref{eq:term3sup}) from the proof of Lemma~\ref{lemma:erm}, each of which holds with probability at least $1-\xi(\delta_n)$, we conclude that, with probability at least $1-3\xi(\delta_n)$,
\begin{align*}
      \frac{1}{4c^2}\big\Vert I_0\mc{T}\left(h_0^\dag-\tilde h\right)\big\Vert^2_2&\le\E_n\big\{I_0(X)s(O)g_{\tilde h}(Z)-I_0(X)r(O)\tilde h(W)g_{\tilde h}(Z)-c^2\cdot I_0(X)g_{\tilde h}(Z)^2\big\}\\
    &\quad +c_2\delta_n\Vert I_0\cdot g_{\tilde h}\Vert_2 +c_2\delta_n^2,
\end{align*}
where $c_3:=\max\{B_2+2,3\}\cdot c_2$. The remainder of the proof then follows exactly as in Lemma~\ref{lemma:erm}, with this updated constant $c_3$ replacing the value $c_3=3c_2=18L'/b$ used there.
\end{proof}

\end{document}